\newcommand{\rd}{\textrm{d}}
\newcommand{\MI}{\text{MI}}
\newcommand{\dash}{\hspace{-0.04cm}-\hspace{-0.04cm}}
\newtheorem{theorem}{Theorem}
\newtheorem{proposition}{Proposition}
\newtheorem{lemma}{Lemma}[]
\newtheorem{remark}{Remark}[]
\newtheorem{definition}{Definition}
\title{Mathematical Representation of Clausius’ and Kelvin’s Statements of the Second Law and Irreversibility}
\author[1,2,*]{Yue Wang}
\author[1]{Hong Qian}
\affil[1]{Department of Applied Mathematics, University of Washington, Seattle, Washington, U.S.A.}
\affil[2]{Institut des Hautes \'Etudes Scientifiques, Bures-sur-Yvette, Essonne, France}
\affil[*]{E-mail address: yuewang@ihes.fr (Y. W.). ORCID: 0000-0001-5918-7525.  \newline\newline \emph{In memory of LI Wenliang (1986–2020), a brave physician}}
\date{}                                           
\begin{document}
\maketitle

\begin{abstract}
We provide a stochastic mathematical representation for 
Clausius' and Kelvin-Planck's statements of the Second
Law of Thermodynamics in terms of the entropy productions of
a finite, compact driven Markov system and its {\it lift}.  A 
surjective map is rigorously established through the lift 
when the state space is either a discrete graph or a continuous 
$n$-dimensional torus $\mathbb{T}^n$. The corresponding lifted processes
have detailed balance thus a natural potential function
but no stationary probability.  We show that in the 
long-time limit the entropy production of the finite driven system 
precisely equals the potential energy decrease in the lifted system.  
This theorem provides a dynamic foundation for the two
equivalent statements of Second Law of Thermodynamics, 
{\it \`{a} la} Kelvin's and Clausius'.  It suggests a modernized, combined statement:  ``A mesoscopic engine that works in completing irreversible internal cycles statistically has necessarily an external effect that lowering a weight accompanied by passing heat from a warmer to a colder 
body.''
	
\end{abstract}

\smallskip
\noindent \textbf{Keywords.} 

\noindent Entropy production; Markov chain; Diffusion process; Second Law of Thermodynamics.

\section{Introduction}

There is a growing awareness toward a slow shifting in 
the foundation of the Second Law of Thermodynamics, from a
macroscopic postulate concerning heat as a form of random
mechanical motion \cite{pauli} to a derivable mathematical discovery 
based on the stochastic dynamics of mesoscopic 
systems \cite{jarzynski,seifert}.  
The first significant attempt 
in this direction was carried out by L. Boltzmann through the
equation that now bears his name and the H-theorem it derives.
The theory is applicable to gas dynamics; a fundamental assumption underlying the classic work is a {\em stosszahlansatz}
\cite{herzfeld,dorfman}.  Rigorous mathematical breakthrough on 
Boltzmann's equation only became available very recently 
\cite{villani}. In 1950s, Bergmann and Lebowitz set up a general
stochastic theory for closed as well as open mechanical systems 
that are consistent with Hamiltonian dynamics, and easily obtained 
an H-theorem like result \cite{lebowitz-bergmann}.  It becomes 
increasingly clear in recent years that a stochastic description 
of the Nature is a very powerful mathematical
representation \cite{qian-bc-1,qian-bc-2}.  

For dynamics that can be represented in terms of a Markov
process, a rather coherent system of {\em mesoscopic
	theory of entropy productions} has emerged.  
See many papers \cite{ge-qian-10,vandenbroek-esposito,esposito-vandenbroek,NESS1,qian-jmp} and references cited within.  
More recently, when this theory is applied to general 
chemical reaction systems represented by stochastic kinetics of elementary reactions, a result that is 
consistent with and further generalizes 
Gibbsian macroscopic chemical thermodynamics 
has been obtained, as a mathematical limit by merely allowing 
the molecular numbers to be infinite
\cite{ge-qian-16-1,ge-qian-16-2}.  In particular it is possible
to show, rigorously for the first time, that for each and every 
elementary, reversible reaction with instantaneous 
forward and backward rates $R^+$ and $R^-$, the 
macroscopic entropy production rate is 
$(R^+-R^-)\log (R^+/R^-)$
\cite{ross,luojl,beard-qian-plos-1,Levchenko}.

In a Markov description of a driven system, irreversible
kinetic {\em cycles} have been identified as fundamental to 
entropy production \cite{hill-book,JQQ,altaner,qkkb}.  From the
standpoint of an observer who simultaneously
follows the system's internal stochastic dynamics as well as 
the external driving mechanism, there is a dissipation associated
with a ``falling weight'' \cite{Lieb-Yngvason}, e.g., a
potential energy drop as a spontaneous, relaxation process.  
These two different perspectives fittingly echo 
the two fundamental statements
of the Second Law of Thermodynamics, from Kelvin and Planck
and from Clausius respectively \cite{planck}:\begin{quotation}
	``{\it It is impossible to construct an engine which will work in a 
		complete cycle, and produce no effect except the raising of 
		a weight and the cooling of a heat-reservoir.}''
\end{quotation}

\begin{quotation}
	``{\it Heat can never pass from a colder to a warmer body without some other change, connected therewith, occurring at the same time.}'' 
\end{quotation}
In the present paper, we shall show that in the setting of
non-detailed balanced Markov processes, either with a finite state space or 
on a continuous $n$-torus with local potential, counting irreversible
kinetic cycles constitutes a {\em lift} of the Markov processes
into an infinite-state Markov process or diffusion on $\mathbb{R}^n$,
respectively. {Here a process is detailed balanced if it has a ``detailed balanced stationary distribution/measure'', starting from which the local flux is zero everywhere. In other words, the process and its time inverse are equivalent. A cycle is reversible if the process restricted on this cycle is detailed balanced. A process is detailed balanced if and only if all cycles are reversible.}
The lifted Markov process satisfies detailed balance; it possesses many
different stationary measures.  However, detailed balance guarantees a 
natural potential function and thus a corresponding Gibbsian 
invariant measure.  This ``no-flux'' Gibbs measure 
is non-normalizable; its unbounded potential function provides 
a rigorous notion of an ``internal energy'' function $\varphi_x$.  
This is a new result; all previous work assumed 
the existence of a unique stationary probability measure as $t\to\infty$.

More specific mathematically,
we shall show that, in the limit of $t\to\infty$, the positive stationary 
entropy production rate in the original Markov process is precisely the 
change in mean internal energy, $E\equiv\mathbb{E}[\varphi]$, of the
free energy dissipation $\dot{F}=\dot{E}-\dot{S}$ in the lifted system.  
The energetic part of the free energy grows linearly with $t$; the entropic part of 
the free energy dissipation vanishes as $t\to\infty$
in Ces\`{a}ro's sense $\dot{S}\equiv S(t)/t\to 0$.  Surprisingly, 
in rigorous mathematics, we have not been able to show the
stronger assertion that $\rd S(t)/\rd t \to 0$ in general except 
some very special cases.

\begin{figure}[t]
	$
	\xymatrix{
		&&S \ar@/_1pc/[rd]&&&&&&\\
		&P\ar@/^1pc/[rd]&E\ar[ddd]_{k_{3}}\ar[rrr]^{k_1^0}&&&ES\ar@/^1pc/[lll]^{k_{-1}}\ar@/^1pc/[lllddd]^{k_{2}}&&&\\
		&&&&&&&&\\
		&&&&&&&&\\
		&&EP\ar[rrruuu]^{k_{-2}}\ar@/_1pc/[uuu]_{k_{-3}^0}&&&&&&\\
		\cdots\ar@/^/[r]&E\ar@/^/[r]\ar@/^/[l]&ES \ar@/^/[r]\ar@/^/[l]&EP\ar@/^/[r]\ar@/^/[l]&E\ar@/^/[r]\ar@/^/[l]&ES\ar@/^/[r]\ar@/^/[l]&EP\ar@/^/[r]\ar@/^/[l]&E\ar@/^/[r]\ar@/^/[l]&\cdots\ar@/^/[l]\\
		&0,0&1,0&1,0&1,1&2,1&2,1&2,2&
	} 
	$\\
	\caption[]{(Up) A chemical reaction with three states: E (enzyme) binds to an incoming S (substrate) to form ES, which transforms to EP, and decomposes to E and P (product). After a clockwise cycle, this reaction turns an S into a P. (Middle) We can expand this reaction cycle into a line. (Down) The number of S consumed and P produced. For example, $E(1,1)$ means that the system has consumed one S and produced one P. When the system starts from $E(0,0)$, ends at $E(2,2)$, we can see that the system turns two S into P.}
	\label{eps}
\end{figure}

{Consider an enzyme (E) molecule catalyze the reaction from a substrate (S) to a product (P)} \cite{ge2012stochastic}:
$$E+S\xrightleftharpoons[k_{-1}]{\,k_1^0\,} ES \xrightleftharpoons[k_{-2}]{\,k_2\,} EP \xrightleftharpoons[k_{-3}^0]{\,k_3\,} E+P.$$
{Assume there is only one E, then the system can be regarded as a Markov chain with a cycle. Maintain the concentrations of S and P ($c_S,c_P$) at constant levels such that $c_Sk_1^0k_2k_3>c_Pk_{-1}k_{-2}k_{-3}^0$. Then the system tends to converse S to P. After a full cycle $E\to ES\to EP\to E$, the system returns to initial state, while one S turns into P. Along this cycle, free energy dissipates (with entropy production). To reflect the change of thermodynamic quantities, we can open the cycle through lifting} (Fig. \ref{eps}). {In the lifting, the process does not return to its initial state after a cycle, but moves right. Therefore we can calculate the number of P made, and the associated thermodynamic quantities, solely from the starting and ending points.}

{Although not all processes are Markov, we can use Markov processes to approximate time-homogeneous non-Markov processes through enlarging state space. For example, if $X_{n+1}$ depends on $X_n$ and $X_{n-1}$, then $\{X_n\}$ is not Markov, but $\{(X_{n-1},X_n)\}$ is Markov.}

To statistical thermodynamics, our mathematical theory
rigorously establishes an equivalence between the two famous
statements concerning the Second Law: A cyclic 
view of dissipation \cite{hill-book,bennett}
and a non-stationary relaxation view of irreversibility.  In fact,
there is a continuous surjective map between the 
trajectories before and after the lifting: In the 
long-time behavior, the cycle
completion and entropy production in the former is 
precisely represented by the potential change in 
the latter.  Dissipation in the former is due to 
indistinguishability of the locally equivalent states
in the latter.

In textbook teaching of the Second Law, showing an 
equivalence of the two statements has a prominent place in 
the logic of the thermodynamics.  In terms of the stochastic thermodynamics based on Markov representations, this paper 
provides novel representations of the two statements:  
One is based on a finite state space and irreversible 
cycles within, and the other is based on the lift of the former 
into an infinite state space on which a potential function 
exists due to detailed balance.  We show the stationary
house-keeping heat in the former is indeed the free energy dissipation of the latter!  In other words, a nonequilibrium 
stationary state can be understood as a spontaneous 
relaxation process on a free energy landscape.  Our theorem
thus provides a stochastic dynamic underpinning for the 
two celebrated verbal statements.

{Finally but not the least, our mathematical result again suggests that the notions of entropy, entropy production, and now together with the new understanding of the Second Law in terms of the Clausius-Kelvin principle, are no longer merely a part of thermal physics. They are valid concepts for any systems and processes that can be modeled by a stochastic Markov process. In fact, one expects these mathematical concepts provide deeper understanding of the nature of ``force'' and ``energy'' in a wide range of problems outside traditional physics.}

The paper is structured as follows:  In Sec. \ref{sec-2},
we prove an embedding theorem that establishes a
minimal lift of a continuous time, finite state Markov chain 
to a detailed balanced process with a proper potential 
function.   Then
in Sec. \ref{sec-3}, we prove the theorem that,
in the limit of $t\to\infty$, equating the entropy 
production rate $\bar{e_p}(t)$ of the finite system 
with the Ces\`{a}ro limit of $e_p(t)$ from the lifted 
system.  For lifted, detailed balanced processes,
$e_p(t)$ can be expressed as $-\rd F/\rd t$
where free energy
$F(t)=\text{D}_{\text{KL}}\big(p(t),\mu\big)$ 
is the relative entropy of $p(t)$ with respect to the
Gibbs measure $\mu=e^{-\varphi}$.  We show as 
$t\to\infty$, $F(t)=E(t)-S(t)$ has a linearly decreasing 
$E(t)$ and a sublinear $S(t)$ controlled by $\log t$.  Therefore
in the long time limit $\bar{e_p}$ equals to $-\dot{E}$. Sec. \ref{sec-n-torus}
is a mathematical generalization of Sec. \ref{sec-3} to 
diffusion processes on $n$-torus and their lifting to $\mathbb{R}^n$.  The paper concludes with Sec. \ref{sec-6}.

\section{The Lifted Markov Chain and Its Infinite State Space}
\label{sec-2}

In this section, we study the lifting of a continuous time,
finite state Markov chain with transition rate matrix $\{q_{ij}\}$.  First, we need some prerequisites in different fields.

\subsection{Prerequisites}
\label{sec:II-A}
{Readers may refer to books on algebraic topology} \cite{munkres1974topology}, graph theory \cite{G173}, algebraic graph theory  \cite{godsil2013algebraic}, Markov chains \cite{norris1998markov}, diffusion processes \cite{oksendal2003stochastic} and thermodynamic quantities in stochastic processes \cite{JQQ} for other prerequisites that are not covered in this section.
\subsubsection{Algebraic Topology}
\begin{definition}
	{Let $X$ be a topological space. A covering space of $X$ is a topological space $C$ together with a continuous surjective map $p: C\to X$, such that for every $x\in X$, there is an open neighborhood $U$ of $x$, such that $p^{-1}(U)$ is a union of disjoint open sets in $C$, each of which is mapped homeomorphically onto $U$ by $p$. A path in $X$ can be uniquely lifted to $C$ with a given starting point.}
\end{definition}
\begin{definition}
	{A covering space is a universal covering space if it is simply connected. General space, such as connected graph or $n$ dimensional torus, has universal covering space. If exists, universal covering space is unique.}
\end{definition}

{The universal covering space of $1$-dimensional torus $\mathbb{S}^1$ is $\mathbb{R}$. The universal covering space of $n$-dimensional torus $\mathbb{T}^n$ is $\mathbb{R}^n$} \cite{munkres1974topology}.

\subsubsection{Graph Theory}

Consider an undirected connected simple finite graph $(V,E)$ with vertex set $V$ and edge set $E$.
\begin{definition}
	The \emph{first Betti number} of $(V,E)$ is $b(V,E)=|E|-|V|+1$. It is the number of ``independent cycles'' \cite{G173}.
\end{definition}
\begin{definition}
	A subgraph of $(V,E)$ is called a \emph{spanning tree} if it is a tree, and contains all the vertices of $(V,E)$. 
\end{definition}
Spanning tree always exist, and has $b(V,E)$ less edges than $(V,E)$, since the first Betti number of a tree is zero \cite{G173}.

\begin{definition}
	A covering graph of $(V,E)$ is a graph $(V',E')$ together with a covering map $p: V'\to V$, which is a surjection and a local isomorphism: if $p(v')=v$,then neighbors of $v'$ are mapped by $p$ bijectively to neighbors of $v$.
\end{definition}
{Embed $(V,E)$ into $\mathbb{R}^n$ such that edges do not intersect. Then the natural topology of $\mathbb{R}^n$ induces a topology on $(V,E)$. Under this topology, a covering graph is just a covering space.} A trajectory in $(V',E')$ can be folded into $(V,E)$ via $p$. A trajectory in $(V,E)$ starting from $v$ can be lifted to a trajectory in $(V',E')$ starting from any $v'$ with $p(v')=v$ \cite{godsil2013algebraic}.

\begin{definition}
	A covering graph is universal if it has no cycle. Universal covering graph exists, and is unique up to isomorphism \cite{godsil2013algebraic}.
\end{definition}

\subsubsection{Potential of Markov chain}
Here and in the following, we always require that $q_{ij}>0$ if and only if $q_{ji}>0$. 
\begin{definition}
	For a Markov chain, if there is a function on state space, $f(i)$, such that for any two adjacent states $i,j$, $f(j)-f(i)=\log(q_{ji}/q_{ij})$, then $f$ is called a global potential. If exists, it is unique up to a constant.
\end{definition}
\begin{definition}
	A global potential is proper if different states have different potentials. 
\end{definition}
In this paper, potential is always calculated symbolically (as a function of $\{q_{ij}\}$), not numerically.

For a Markov chain, we define the potential gain of a trajectory $i_1,\cdots,i_k$ as 
$$\sum_{j=1}^{k-1}\log\frac{q_{i_{j+1} i_j}}{q_{i_j i_{j+1}}}.$$
If global potential exists, then the potential gain is the difference of potential on the two end states. 

If the potential gain of any closed trajectory is zero, then there is a global potential. If in addition, any trajectory with zero potential gain is closed, then the global potential is proper.

\subsection{Embedding a Markov chain into an $n$-torus}

\subsubsection{Motivation and Results}

Consider a continuous time irreducible Markov chain $M$ with finite states and transition rate matrix $Q=\{q_{ij}\}$. Then we can define a graph $(V,E)$. Vertices are states of this Markov chain, and edges are possible transitions.  

When a trajectory finishes a cycle, the potential gain is not zero in general, although it returns to its starting point. Thus the current state cannot properly reflect the history of potential change. We can try to expand such cycles with non-zero potential gain.

To illustrate our idea, consider a $3$-state Markov chain, with one cycle $1\dash 2\dash 3\dash 1$. We can embed the corresponding graph into $\mathbb{S}^1$, and then lift it to $\mathbb{R}$. Now it is $\cdots\dash 1\dash 2\dash 3\dash 1\dash 2\dash 3\dash 1\dash\cdots$ (Fig. \ref{1dlift}). There exists a proper global potential. As long as we know the ends of a trajectory, we know its potential gain.

\begin{figure}[t]
	$
	\xymatrix{
		2\ar@{-}[d]& &&&2\ar@{-}[d] &2\ar@{-}[d]&2\ar@{-}[d]&2\ar@{-}[d]&&\\
		3&\ar@{=>}[r]&&\cdots\ar@{-}[rrd] &3\ar@{-}[rrd]&3\ar@{-}[rrd]&3\ar@{-}[rrd]&3\ar@{-}[rrd]&&\\
		&1\ar@{-}[lu]\ar@{-}[luu]&& & &1\ar@{-}[luu]&1\ar@{-}[luu]&1\ar@{-}[luu]&1\ar@{-}[luu]&\cdots\\
		\ar@{=>}[r]&&\cdots\ar@{-}[r]&1\ar@{-}[r]&2\ar@{-}[r]&3\ar@{-}[r]&1\ar@{-}[r]&2\ar@{-}[r]&3\ar@{-}[r]&\cdots
	} 
	$\\
	\caption[]{The lifting of a $3$-state Markov chain. This chain is embedded into $\mathbb{S}^1$. Then $\mathbb{S}^1$ and the embedded chain is lifted to $\mathbb{R}$, namely the universal cover of $\mathbb{S}^1$. The embedded chain is also lifted to an infinite-state chain with global potential.}
	\label{1dlift}
\end{figure}

Now the goal is to find a new Markov chain, which is locally isomorphic to $M$ (a covering graph), and has a proper global potential. 

We require that the global potential is proper, since there is no need to distinguish between two states with the same potential. Also, properness guarantees that the cover is unique. When and only when there is a proper global potential, a trajectory has zero potential gain if and only it is closed. 

To ensure the existence of global potential, we can consider the universal covering graph of $M$. Cover guarantees local isomorphism, and universality implies there is no cycle, which means the existence of a global potential. However, the global potential on the universal cover might not be proper. A natural idea is to glue together states with the same potential. 

Consider a Markov chain with a single state and $n$ cycles. Its universal cover is the Cayley graph of free group $F_n$. In this universal cover, states have the same potential if and only if the abelianization of their corresponding words are the same. If states with the same potential are glued together, then we have the Cayley graph of $\mathbb{Z}^n$, which is the abelianization of $F_n$. For a general Markov chain, the covering graph with global potential should be in $\mathbb{R}^n$, as the repetition of a unit on $\mathbb{Z}^n$ lattice. Since $\mathbb{R}^n/\mathbb{Z}^n=\mathbb{T}^n$, the $n$-torus, we expect that the unit should be in $\mathbb{T}^n$. This is the intuition of the following theorems: why the Markov chain is embedded into $\mathbb{T}^n$, and why the cover with proper global potential is in $\mathbb{Z}^n$ lattice. Readers can refer to a standard algebraic topology textbook \cite{munkres1974topology} for notions used in this paragraph.

\begin{theorem}[torus version]
	A finite Markov chain $M$ with first Betti number $n$ can be embedded into $n$-torus $\mathbb{T}^n$, such that any closed trajectory has zero potential gain if and only if it is homotopy trivial. Here homotopy trivial means that the trajectory could continuously transform into a single point, as a closed curve in $\mathbb{T}^n$. (For $n=1$, only the cycle can be embedded.)
	\label{T1}
\end{theorem}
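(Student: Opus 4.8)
The plan is to turn the statement into a homological one and then construct the embedding by hand. Write $(V,E)$ for the graph of $M$ and fix a spanning tree $T$; then $E$ has exactly $n$ chords $e_1,\dots,e_n$ outside $T$, and the fundamental cycles $\gamma_1,\dots,\gamma_n$, where $\gamma_k$ is $e_k$ closed up by the unique $T$-path joining its endpoints, form a $\mathbb{Z}$-basis of $H_1\big((V,E);\mathbb{Z}\big)\cong\mathbb{Z}^n$ and, based at a fixed vertex through $T$, a free basis of $\pi_1\big((V,E)\big)\cong F_n$. A closed trajectory in the graph is a $1$-cycle and hence has a well-defined homology class $\sum_k m_k\gamma_k$ (an edge traversed back and forth contributes cancelling terms, so repetitions do no harm). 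Both sides of the desired equivalence depend only on this class: the potential gain is $\mathbb{Z}$-linear in the underlying $1$-chain since a graph has no $2$-cells; and, after embedding, the image loop is freely null-homotopic in $\mathbb{T}^n$ iff its class in $\pi_1(\mathbb{T}^n)=\mathbb{Z}^n$ vanishes, which depends only on the image in $H_1\big((V,E);\mathbb{Z}\big)$ because $\pi_1(\mathbb{T}^n)$ is abelian. So it suffices to build an embedding $f\colon(V,E)\hookrightarrow\mathbb{T}^n$ for which the potential-gain homomorphism $\psi\colon H_1\big((V,E);\mathbb{Z}\big)\to\mathbb{R}$ and the induced homomorphism $\phi\colon H_1\big((V,E);\mathbb{Z}\big)\to H_1(\mathbb{T}^n;\mathbb{Z})$ on first homology — both source and target being $\mathbb{Z}^n$ — have the same kernel.

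Next I identify $\ker\psi$. The value $\psi(\gamma_k)$ is a signed sum, over the edges $\{i,j\}$ of $\gamma_k$, of terms $\log(q_{ji}/q_{ij})$. Since the chord $e_k=\{a_k,b_k\}$ lies in $\gamma_k$ but in no other $\gamma_{k'}$, the symbol $\log(q_{b_ka_k}/q_{a_kb_k})$ occurs in $\psi(\gamma_k)$ with coefficient $\pm1$ and in none of the $\psi(\gamma_{k'})$, $k'\neq k$. Because the potential is computed symbolically in the independent variables $\{q_{ij}\}$, the values $\psi(\gamma_1),\dots,\psi(\gamma_n)$ are $\mathbb{Q}$-linearly independent, so $\ker\psi=0$; equivalently, a closed trajectory has zero potential gain precisely when it is null-homologous. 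It therefore suffices to make $\phi$ \emph{injective}, and since its domain and codomain are both $\mathbb{Z}^n$ I will in fact make it an isomorphism, by arranging that each $\gamma_k$ maps to the $k$-th standard generator of $\pi_1(\mathbb{T}^n)=(S^1)^n$.

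For the construction, realize $T$ as a subcomplex inside a small coordinate ball $B\subset\mathbb{T}^n$ — a tree is contractible and, for $n\geq2$, embeds in $B\cong\mathbb{R}^n$. For each chord $e_k=\{a_k,b_k\}$ attach an arc from $a_k$ to $b_k$ that leaves $B$, winds around $\mathbb{T}^n$ so that, closed up through $B$, it represents the $k$-th generator, and choose these arcs pairwise disjoint and meeting $T$ only at their endpoints. Then the image of $(V,E)$ is homeomorphic to $(V,E)$, $\gamma_k$ maps to the $k$-th generator, $\phi=\mathrm{id}$, and $\ker\phi=0=\ker\psi$, which is the theorem. The disjointness of the arcs comes for free by general position when $n\geq3$ (every finite graph embeds in $\mathbb{R}^3$). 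When $n=1$, a connected $1$-complex embedded in $\mathbb{T}^1=S^1$ is an arc or all of $S^1$, so first Betti number $1$ forces $M$'s graph to be a single cycle — the stated exception (for which the construction is just writing the cycle as a $T$-path inside an arc together with one chord going around $S^1$).

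The main obstacle is keeping $f$ an honest topological embedding while simultaneously forcing the prescribed homology classes, and in particular confirming that a connected graph of first Betti number $n$ embeds in $\mathbb{T}^n$ at all. I expect the argument to split by dimension: for $n\geq3$ the ambient dimension is at least $3$, so the graph embeds and the chord arcs can be isotoped into the required homotopy classes; for $n=2$, pruning leaves and suppressing degree-two vertices reduces the question to the orientable genus of a ``core'' multigraph, and a Betti-number count forces this core to be a figure-eight (one vertex, two self-loops) or a theta graph (two vertices, three parallel edges), each of which embeds in $\mathbb{T}^2$ with its two independent cycles running along the two coordinate directions — here the naive ``each chord winds around its own factor'' picture must be replaced by an explicit genus-one layout, since a $(1,0)$- and a $(0,1)$-curve on $\mathbb{T}^2$ necessarily cross. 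The remaining points, already sketched above, are routine.
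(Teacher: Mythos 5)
Your main argument is correct and, once unwound, is the same construction the paper uses: a spanning tree placed in a fundamental domain of $\mathbb{T}^n$, with each of the $n$ chords routed through its own pair of opposite faces. The difference lies in the verification. The paper checks the key equivalence by passing to the lift $L$ in $\mathbb{R}^n$ and running a parity/spanning-tree argument on net edge counts, whereas you work directly in $H_1$: both ``zero potential gain'' and ``null-homotopic in $\mathbb{T}^n$'' factor through the class of the closed walk as a $1$-cycle; $\ker\psi=0$ because each chord symbol $\log(q_{b_ka_k}/q_{a_kb_k})$ occurs in $\psi\bigl(\sum_k m_k\gamma_k\bigr)$ with coefficient $\pm m_k$ and the potential is computed symbolically; and the construction makes $\phi$ injective. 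This is a clean and arguably more transparent packaging of the same content, and it correctly isolates the one place where the paper's symbolic-computation convention is indispensable.

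There is, however, a genuine gap in your $n=2$ discussion, and it is worth naming because it also exposes an imprecision in the statement itself. Your list of possible cores omits the ``dumbbell'': two disjoint loops joined by a path (concretely, two vertex-disjoint triangles connected by a single edge, which has first Betti number $2$). For such a graph the two fundamental cycles are vertex-disjoint, so under any genuine topological embedding into $\mathbb{T}^2$ their images are disjoint simple closed curves and therefore have algebraic intersection number $0$; since the intersection form on $H_1(\mathbb{T}^2;\mathbb{Z})$ is nondegenerate, their classes are linearly dependent and $\phi$ cannot be injective. Hence, if ``embedded'' is read literally, your plan of arranging $\phi$ to be an isomorphism is unachievable for these graphs, and indeed no embedding with the required property exists --- some nontrivial closed trajectory with nonzero symbolic potential gain would be null-homotopic. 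The repair is to observe that injectivity of the map is never actually used, by you or by the paper: the equivalence depends only on the induced homomorphism on $H_1$ (equivalently, on the homotopy classes assigned to the chords), so ``embedded'' should be read as ``continuously mapped, possibly with edge crossings,'' under which both your construction and the paper's go through verbatim for every $n$. You should either adopt that reading explicitly, or restrict the literal-embedding claim for $n=2$ to graphs whose core is a figure-eight or a theta graph.
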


\begin{theorem}[lifted version]
	When $\mathbb{T}^n$ with the embedded Markov chain as in Theorem \ref{T1} is lifted to its universal cover $\mathbb{R}^n$, the correspondingly lifted Markov chain $L$ has a proper global potential.
	\label{T2}
\end{theorem}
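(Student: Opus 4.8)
The plan is to transport the global-potential structure along the covering map. By Theorem \ref{T1} we have an embedding of $M$ into $\mathbb{T}^n$; call the image graph $G\subset\mathbb{T}^n$. First I would recall that $\mathbb{R}^n\to\mathbb{T}^n=\mathbb{R}^n/\mathbb{Z}^n$ is the universal covering of the torus, so the preimage $\widetilde G$ of $G$ in $\mathbb{R}^n$ carries the lifted chain $L$, and the covering map $p:\widetilde G\to G$ is a local isomorphism of graphs with the transition rates pulled back unchanged (i.e.\ $\tilde q_{\tilde i\tilde j}=q_{p(\tilde i)p(\tilde j)}$ whenever $\tilde i\sim\tilde j$). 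Consequently the potential gain of any trajectory in $\widetilde G$ equals the potential gain of its projection in $G$. The key point inherited from Theorem \ref{T1} is the characterization: a closed trajectory in $G$ has zero potential gain if and only if it is homotopy trivial in $\mathbb{T}^n$.

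Next I would establish existence of a global potential on $L$. It suffices, by the criterion recalled in Section \ref{sec:II-A}, to show every closed trajectory in $\widetilde G$ has zero potential gain. A closed trajectory $\tilde\gamma$ in $\widetilde G\subset\mathbb{R}^n$ is null-homotopic (since $\mathbb{R}^n$ is simply connected), hence its projection $p\circ\tilde\gamma$ is a null-homotopic closed curve in $\mathbb{T}^n$; by the characterization above its potential gain is zero, and by the pull-back equality so is that of $\tilde\gamma$. Fixing a base vertex $\tilde i_0$ and setting $\varphi(\tilde i)$ to be the potential gain of any trajectory from $\tilde i_0$ to $\tilde i$ (well defined by the previous sentence) gives the global potential on $L$.

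For properness I must show distinct vertices of $\widetilde G$ have distinct potentials. Suppose $\varphi(\tilde i)=\varphi(\tilde j)$ with $\tilde i\neq\tilde j$. Pick a trajectory $\tilde\gamma$ from $\tilde i$ to $\tilde j$ in the connected graph $\widetilde G$; its potential gain is $\varphi(\tilde j)-\varphi(\tilde i)=0$. Its projection is a closed trajectory in $G$ with zero potential gain, hence homotopy trivial in $\mathbb{T}^n$, hence (lifting the nullhomotopy, using the homotopy lifting property of the covering $\mathbb{R}^n\to\mathbb{T}^n$) the path $\tilde\gamma$ is a \emph{closed} path in $\mathbb{R}^n$, forcing $\tilde i=\tilde j$, a contradiction. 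Thus $\varphi$ is proper.

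The main obstacle I anticipate is the careful bookkeeping at the boundary between the continuous topology of $\mathbb{T}^n$ and the combinatorial structure of the embedded graph: one must make sure that ``homotopy trivial as a curve in $\mathbb{T}^n$'' and ``lifts to a closed trajectory in $\mathbb{R}^n$'' are genuinely interchangeable for the edge-path trajectories in question. This requires knowing that the embedding of $M$ is such that the inclusion $G\hookrightarrow\mathbb{T}^n$ induces a surjection (indeed, by the construction behind Theorem \ref{T1}, an isomorphism) on $\pi_1$, so that a trajectory-loop in $G$ is null-homotopic in $G$ exactly when it is null-homotopic in $\mathbb{T}^n$; then the standard covering-space dictionary ($\tilde\gamma$ closed $\iff$ $[p\circ\tilde\gamma]$ lies in $p_*\pi_1(\mathbb{R}^n)=\{1\}$) applies verbatim. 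Once that identification is in place, both the existence and the properness of the lifted potential follow formally, with no further estimates needed.
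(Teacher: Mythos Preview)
Your route is the reverse of the paper's. The paper proves Theorem~\ref{T2} first, by an explicit construction: embed a spanning tree of $M$ into the cube $[0,1]^n$, draw each of the $n$ non-tree (``special'') edges across a distinct pair of opposite hypersurfaces, and lift to $\mathbb{R}^n$. It then verifies ``closed $\iff$ zero potential gain'' in $L$ by direct edge-counting: on a closed path the net crossing number of each hypersurface is zero, so each special edge has net count zero, and a spanning-tree argument forces all remaining edge counts to zero as well. Theorem~\ref{T1} is deduced \emph{afterwards} as a one-line corollary. Since you invoke Theorem~\ref{T1} to prove Theorem~\ref{T2}, your argument is circular relative to the paper's logical order; to make it stand on its own you would need an independent proof of Theorem~\ref{T1}, which amounts to carrying out essentially the same combinatorics the paper puts into Theorem~\ref{T2}.

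Separately from the circularity, your properness argument has a gap. You write ``Its projection is a closed trajectory in $G$'', but the projection of a path from $\tilde i$ to $\tilde j$ runs from $p(\tilde i)$ to $p(\tilde j)$, and nothing you have said forces these to coincide. Zero potential gain alone does not give this; one needs the paper's observation that the potential is computed \emph{symbolically}, so zero potential gain means the net traversal count of every edge of $M$ vanishes, whence the trajectory is a $1$-cycle and its endpoints in $M$ agree. Only after $p(\tilde i)=p(\tilde j)$ is established does the projection become a closed loop to which Theorem~\ref{T1} and your homotopy-lifting step apply. Your final paragraph anticipates a difficulty with the $\pi_1$-dictionary between $G$ and $\mathbb{T}^n$, but that step is actually fine; the missing piece is this preliminary reduction to the same fibre.
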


Theorem \ref{T1} connects two seemingly irrelevant properties through a coincidence: a closed trajectory in $\mathbb{T}^n$ is homotopy trivial, if and only if its lifting in $\mathbb{R}^n$ is closed; a closed trajectory in $M$ has zero potential gain, if and only if its lifting in $L$ (which has a proper global potential) is closed. Here is the essence of this coincidence: Regard $\mathbb{T}^n$ as the Cartesian product of $n$ one-dimensional tori $\mathbb{S}^1$, then a closed trajectory in $\mathbb{T}^n$ is homotopy trivial if and only if its projection on each $\mathbb{S}^1$ has winding number zero. The Markov chain with first Betti number $n$ has $n$ independent cycles \cite{Pol}, and a closed trajectory has zero potential gain if and only if each independent cycle has winding number zero \cite{JQQ}. The embedding in Theorem \ref{T1} in fact embeds each independent cycle into one $\mathbb{S}^1$, therefore unifies the winding number of topological cycle and graph theoretical cycle.

\begin{proposition}
	$L$ is the minimal covering graph of $M$ with global potential, in the sense that a covering graph $C$ of $M$ has global potential if and only if it is a covering graph of $L$. Besides, $L$ is the only covering graph of $M$ with proper global potential.
	\label{P1}
\end{proposition}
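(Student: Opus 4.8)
The plan is to prove Proposition~\ref{P1} by exploiting the universal cover $\widetilde{M}$ of $M$ as a common ceiling, and identifying both $L$ and any potential-bearing cover $C$ as quotients of $\widetilde{M}$ by subgroups of the deck group $\pi_1(M)\cong F_n$. Concretely, by Theorem~\ref{T1} and Theorem~\ref{T2}, the lifted chain $L$ sits between $M$ and $\widetilde{M}$; since $L$ is obtained from $\widetilde{M}$ by gluing states of equal potential, $L = \widetilde{M}/K$ where $K$ is the kernel of the potential map $\pi_1(M) = F_n \to \mathbb{Z}^n$, i.e. $K = [F_n,F_n]$ the commutator subgroup. The first task is to make this identification rigorous: show that the potential gain of a closed trajectory in $M$ depends only on the abelianization of the corresponding word in $F_n$ (this is essentially the cycle-space decomposition invoked via \cite{JQQ}), so that two lifts in $\widetilde{M}$ have the same potential iff they differ by an element of $K$.

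Next I would handle the "only if" direction of the main equivalence. Let $C$ be a covering graph of $M$ with a (not necessarily proper) global potential. By the standard Galois correspondence for covering spaces, $C = \widetilde{M}/H$ for some subgroup $H \le F_n$ acting as deck transformations. A global potential exists on $C$ precisely when the potential function on $\widetilde{M}$ descends to $C$, i.e. when $H$ acts trivially on potential values, i.e. when $H \subseteq K$. But $H \subseteq K$ is exactly the condition that $C = \widetilde{M}/H$ covers $\widetilde{M}/K = L$ (the covering map $\widetilde{M}/H \to \widetilde{M}/K$ being induced by the inclusion $H \hookrightarrow K$). This gives "$C$ has global potential $\Rightarrow$ $C$ is a cover of $L$." The converse "$C$ covers $L$ $\Rightarrow$ $C$ has global potential" is easier: pull back the global potential of $L$ through the covering map $C \to L$; one checks the defining relation $f(j)-f(i) = \log(q_{ji}/q_{ij})$ transports correctly because the covering map is a local isomorphism preserving transition rates.

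For minimality and uniqueness of the proper cover: $L$ itself has a \emph{proper} global potential by Theorem~\ref{T2}, and it is minimal among potential-bearing covers because any such $C$ covers it. For the uniqueness claim, suppose $C = \widetilde{M}/H$ has a proper global potential. Properness means distinct states of $C$ have distinct potentials; equivalently, two elements of $\widetilde{M}$ in the same $H$-orbit have equal potential and two elements with equal potential lie in the same $H$-orbit. The first condition forces $H \subseteq K$, and the second forces $K \subseteq H$ (if $g \in K$, then $gv$ and $v$ have equal potential for every vertex $v$, hence lie in the same $H$-orbit, so $g \in H$ by freeness of the action); therefore $H = K$ and $C = L$.

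The main obstacle I anticipate is the bookkeeping in the "equal potential $\Leftrightarrow$ same $K$-coset" step — i.e. establishing that the potential on $\widetilde{M}$ factors exactly through the abelianization and no further, so that $K$ is precisely the commutator subgroup and not something larger. This requires the \emph{independence} of the $n$ cycle winding numbers as functionals on closed trajectories, which is where the hypothesis "first Betti number $n$" and the reference \cite{JQQ} do the real work; one must ensure the symbolic (rather than numerical) convention for the potential — so that accidental numerical coincidences among the $q_{ij}$ do not collapse distinct cosets — is what guarantees properness of $L$ and hence the clean equality $H=K$. Once that is in place, the rest is a routine application of covering-space Galois theory.
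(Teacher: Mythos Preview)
Your proposal is correct and takes a genuinely different route from the paper. The paper's proof is hands-on: given a cover $C$ of $M$ with global potential, it explicitly constructs the covering map $p: C \to L$ by choosing basepoints, then for each vertex of $C$ taking a path to the basepoint, folding it to $M$, and re-lifting it to $L$; well-definedness of the endpoint uses properness of the potential on $L$, and surjectivity and local bijectivity are checked directly. Uniqueness is then a one-line fiber argument. Your approach instead passes through the universal cover and the Galois correspondence: identifying $L = \widetilde{M}/[F_n,F_n]$ and an arbitrary cover as $\widetilde{M}/H$, you reduce the entire statement to the subgroup inclusion $H \subseteq K$ and the equality $H = K$. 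This is cleaner and more conceptual for a reader fluent in covering-space theory, and it makes the role of the abelianization transparent; the paper's argument, by contrast, is more elementary and self-contained, never naming $\pi_1$ or $[F_n,F_n]$ and relying only on the path-lifting property already used in Theorems~\ref{T1}--\ref{T2}. Two small points worth tightening in your write-up: first, the phrase ``$H$ acting as deck transformations'' is slightly loose, since $H$ need not be normal in $F_n$ --- what you actually use is that $H$ acts on $\widetilde{M}$ as a subgroup of the deck group of $\widetilde{M} \to M$, which suffices; second, your uniqueness step implicitly uses that equal-potential vertices of $\widetilde{M}$ project to the \emph{same} vertex of $M$ (so that they differ by a deck transformation at all), which follows from the symbolic convention because a walk with zero net edge-count must be closed. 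You have already flagged the symbolic-versus-numerical issue as the crux, and that is exactly right: it is what pins down $\ker\Phi = [F_n,F_n]$ rather than something larger.
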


\subsubsection{Proofs}
\begin{proof}[Theorem \ref{T2}]
	We will use the example in Sec. \ref{sec:II-A} as illustrations.
	
	Regard $\mathbb{T}^n$ as the unit hypercube $[0,1]^n$ with opposite hypersurfaces glued together. For $M$ with first Betti number $b(V,E)=n$, choose a spanning tree, and embed it into $\mathbb{T}^n$. For each edge of $M$ that is not in the spanning tree (we have $n$ of such special edges), assign a pair of opposite hypersurfaces to it. Draw this edge in $\mathbb{T}^n$ while crossing the corresponding hypersurface once. Now we have embedded $M$ into $\mathbb{T}^n$ (Fig. \ref{embed}).

	\begin{figure}[t]
		$
		\xymatrix{
			&&&&&&(3)\ar@{=}[ddr]&&&\\
			&&&&&+\ar@{-}[dddd]\ar@{-}[rrr]&&&+\ar@{-}[dddd]&\\
			&4 && 4&&&&4&&\\
			&1\ar@{-}[u]\ar@{-}[ld]\ar@{-}[d]& &1\ar@{-}[u]\ar@{-}[ld]\ar@{-}[d]&&&&1\ar@{-}[u]\ar@{-}[ld]\ar@{-}[d]&&\\
			3\ar@{=}[uur]&2\ar@{--}[l]& 3& 2&(2)\ar@{--}[rr]&&3&2&&(3)\ar@{--}[ll]\\
			&&&&&+&&&+\ar@{-}[lll]&\\
			&&&&&&&(4)\ar@{=}[uul]&&
		} 
		$\\
		\caption[]{In this example, the Markov chain (left) has first Betti number $b(V,E)=2$, therefore we can choose a spanning tree (middle), which corresponds to two special edges $2- - 3$ and $3=\hspace{-0.05cm}=4$. Embed the spanning tree into $\mathbb{T}^2$, which is a square in $\mathbb{R}^2$ with opposite boundaries glued. Assign special edge $2- - 3$ to vertical boundaries, and $3=\hspace{-0.05cm}=4$ to horizontal boundaries. Then connect $2$ and $3$ across the vertical boundaries, connect $3$ and $4$ across the horizontal boundaries. Now we have embedded the Markov chain into $\mathbb{T}^2$ (right).}
		\label{embed}
	\end{figure}

	\begin{figure}[t]
		$\xymatrix{
			&\ar@{-}[dddddddddddddd]&\cdots&&\ar@{-}[dddddddddddddd]&\cdots&&\ar@{-}[dddddddddddddd]&\cdots&&\ar@{-}[dddddddddddddd]&\\
			\ar@{-}[rrrrrrrrrrr]&&&&&&&&&&&\\
			&&&4\ar@{=}[uul]&&&4\ar@{=}[uul]&&&4\ar@{=}[uul]&&\\
			&&&1\ar@{-}[u]\ar@{-}[ld]\ar@{-}[d]&&&1\ar@{-}[u]\ar@{-}[ld]\ar@{-}[d]&&&1\ar@{-}[u]\ar@{-}[ld]\ar@{-}[d]&&\\
			\cdots&&3\ar@{--}[ll]&2&&3\ar@{--}[ll]&2&&3\ar@{--}[ll]&2&&\cdots\ar@{--}[ll]\\
			\ar@{-}[rrrrrrrrrrr]&&&&&&&&&&&\\
			&&&4\ar@{=}[uul]&&&4\ar@{=}[uul]&&&4\ar@{=}[uul]&&\\
			&&&1\ar@{-}[u]\ar@{-}[ld]\ar@{-}[d]&&&1\ar@{-}[u]\ar@{-}[ld]\ar@{-}[d]&&&1\ar@{-}[u]\ar@{-}[ld]\ar@{-}[d]&&\\
			\cdots&&3\ar@{--}[ll]&2&&3\ar@{--}[ll]&2&&3\ar@{--}[ll]&2&&\cdots\ar@{--}[ll]\\
			\ar@{-}[rrrrrrrrrrr]&&&&&&&&&&&\\
			&&&4\ar@{=}[uul]&&&4\ar@{=}[uul]&&&4\ar@{=}[uul]&&\\
			&&&1\ar@{-}[u]\ar@{-}[ld]\ar@{-}[d]&&&1\ar@{-}[u]\ar@{-}[ld]\ar@{-}[d]&&&1\ar@{-}[u]\ar@{-}[ld]\ar@{-}[d]&&\\
			\cdots&&3\ar@{--}[ll]&2&&3\ar@{--}[ll]&2&&3\ar@{--}[ll]&2&&\cdots\ar@{--}[ll]\\
			\ar@{-}[rrrrrrrrrrr]&&&&&&&&&&&\\
			&&&\cdots\ar@{=}[uul]&&&\cdots\ar@{=}[uul]&&&\cdots\ar@{=}[uul]&&
		}
		$\\
		\caption[]{Continued with Figure \ref{embed}. $\mathbb{T}^2$ is lifted to its universal covering space $\mathbb{R}^2$. Accordingly the embedded Markov chain is lifted to a covering space (not universal), which is a larger Markov chain.}
		\label{expand}
	\end{figure}

	The universal covering space of $\mathbb{T}^n$ is $\mathbb{R}^n$. Through the covering map, the embedded Markov chain is also lifted into $\mathbb{R}^n$ (Fig. \ref{expand}). The lifted Markov chain $L$ is a covering graph of $M$.  We can assign an $n$-tuple coordinate to each unit hypercube. In $L$, moving along special edges is the only way to change the coordinate.
	
	The construction is completed. Then we prove the existence of proper global potential in $L$ by showing that a trajectory is closed if and only if its potential gain is zero.
	
	Notice that a trajectory has zero potential gain if and only if the net number of each edge in $M$ is zero.
	
	Consider a closed trajectory in $L$. The net number of each special edge appears in the trajectory equals the net number of corresponding hypersurface crossed, which is zero. Fold this trajectory to $M$, and count the net number of each edge. If a non-special edge has non-zero net number, then on each end, there is a neighboring edge with non-zero net number. However, without special edges, non-special edges constitute the spanning tree, thus edges with non-zero net number cannot form a loop, a contradiction. 
	
	If a trajectory has zero potential gain, then first its two ends should be the same in $M$. Otherwise the total number of edges containing one end is odd, a contradiction. Also, if two ends are different in $L$, then at least one component of their coordinates is different, which means the net number of the corresponding special edge is not zero. Thus the potential gain is not zero.
	\qed
\end{proof}
\begin{proof}[Theorem \ref{T1}]
	A trajectory in $M$ has zero potential gain if and only if its lifting in $L$ has zero potential gain. As proved above, a trajectory in $L$ has zero potential gain if and only if it is closed. Furthermore, a closed trajectory in $\mathbb{T}^n$ is homotopy trivial if and only if its lifting in $\mathbb{R}^n$ is still closed. 
	\qed
\end{proof}

\begin{proof}[Proposition \ref{P1}]
	If $C$ is a covering graph of $L$ with covering map $p$, then $f$, a global potential on $L$, naturally induces a function $g$ on $C$ through $g(i)=f(p(i))$ with $i\in C$. Since covering map keeps neighborhood, $g$ is a global potential on $C$.
	
	Assume $C$ has a global potential $g$. We will construct a map $p$ from $C$ to $L$, and then prove it is surjective and locally bijective. 
	
	First choose $o'\in C$ and $o\in L$, such that they are mapped to the same state in $M$. We calibrate $g$ through adding a constant to $g$ such that $g(o')=f(o)$. For any state $i'\in C$, choose a trajectory connecting $i'$ and $o'$. Fold this trajectory to $M$, then lift it to $L$, such that the image of $o'$ in $M$ is lifted to $o$. Assume the image of $i'$ in $M$ is lifted to $i$. We have $g(i')=f(i)$. Since $f$ is a proper global potential, $i$ is the only state with $f(i)=g(i')$, thus $i$ is independent of the choice of trajectory. Hence we have a well-defined map $p$ from $C$ to $L$, which maps $i'$ to $i$. 
	
	For any state $j\in L$, choose a trajectory connecting $o$ and $j$. Fold this trajectory to $M$, then lift it to $C$, to get a trajectory connecting $o'$ and $j'$. Since $g(j')=f(j)$, $p(j')=j$, $p$ is surjective. 
	
	If for $j\in L$ and $j'\in C$, we have $g(j')=f(j)$ and thus $p(j')=j$, then for any neighbor $k$ of $j$, through folding-lifting of trajectory $o-\cdots-j-k$, we can find neighbor $k'$ of $j'$, such that $g(k')=f(k)$, thus $p(k')=k$. Similarly, for any neighbor $k'$ of $j'$, we can find neighbor $k$ of $j$ with $p(k')=k$. This shows that $p$ is locally bijective. Thus $p$ is a covering map.
	
	Uniqueness of lifting with proper global potential: Assume $C$ is another covering graph of $M$ with global potential, then $C$ is a covering graph of $L$, and a state $i\in L$ has at least two preimages in $C$. These preimages have the same potential, thus $C$ does not have proper global potential.
	\qed
\end{proof}

\section{Thermodynamic Quantities of Markov Chains}
\label{sec-3}

With a Markov chain and its lifting established,
we now consider the thermodynamic quantities, especially entropy productions of the 
corresponding Markov chains.

\subsection{Stationary distributions and measures of finite-state Markov chain and its lifting}

In the lifted chain $L$, state $i$ is lifted to $i_\alpha$, where $\alpha\in\mathbb{Z}^n$. The lifted initial distribution $p_{i_\alpha}(0)$ is compatible with the original distribution $\bar{p}_{ i }(0)$, in the sense that $\bar{p}_{ i }(0)=\sum_\alpha p_{i_\alpha}(0)$. Then we have $\bar{p}_{ i }(t)=\sum_\alpha p_{i_\alpha}(t)$.

$L$ has a periodic stationary measure $\pi_{i_\alpha}=\bar{\pi}_{ i }$, where $\bar{\pi}_{ i }$ is the unique stationary distribution of the original Markov chain $M$. Since $L$ has a global potential $\varphi_{i_\alpha}$, one could construct a detailed balanced stationary measure $\mu_{i_\alpha}=\exp(-\varphi_{i_\alpha})$, such that $\mu_{i_\alpha}q_{ij}=\mu_{j_\beta}q_{ji}$.

To further study the stationary distributions and measures, we need to consider the relative entropy of $p_i(t)$ with respect to any stationary measure $\theta_{i_\alpha}$,

$$\text{D}_{\text{KL}}(p,\theta)=\sum_{i}\sum_\alpha p_{i_\alpha}(t)\log\frac{p_{i_\alpha}(t)}{\theta_{i_\alpha}}.$$

{We stipulate that $0\log 0=0$.}

In this paper, the initial distribution $p_{i_\alpha}(0)$ is concentrated enough such that {$\text{D}_{\text{KL}}(p(0),\theta)$ is finite for all $\theta$. This guarantees that all quantities below are finite in finite time.} 

The following Lemma \ref{voigt} can be found in many references \cite{morimoto,voigt,thompson-qian}. 

{The notation $j_\beta\sim i_\alpha$ means that $j_\beta$ and $i_\alpha$ are adjacent. }

\begin{lemma}
	\label{voigt}
	$\text{D}_{\text{KL}}(p,\theta)$ is monotonically decreasing.
\end{lemma}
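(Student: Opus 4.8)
The plan is the classical convexity argument applied to the forward master equation of the lifted chain $L$ (it applies verbatim to the finite chain $M$ as well). First I would record the Kolmogorov equation $\dot p_{i_\alpha}(t)=\sum_{j_\beta}\bigl(p_{j_\beta}(t)\,q_{j_\beta i_\alpha}-p_{i_\alpha}(t)\,q_{i_\alpha j_\beta}\bigr)$, where $q_{i_\alpha j_\beta}$ is the rate of the transition $i_\alpha\to j_\beta$ in $L$ (equal, by the local isomorphism, to the corresponding rate $q_{ij}$ in $M$), and note that any invariant measure $\theta$ obeys $\sum_{j_\beta}\bigl(\theta_{j_\beta}q_{j_\beta i_\alpha}-\theta_{i_\alpha}q_{i_\alpha j_\beta}\bigr)=0$ for every $i_\alpha$. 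Differentiating $\text{D}_{\text{KL}}(p,\theta)=\sum_{i_\alpha}p_{i_\alpha}\log(p_{i_\alpha}/\theta_{i_\alpha})$ in $t$ and using that $\tfrac{d}{dt}\sum_{i_\alpha}p_{i_\alpha}=0$ (conservation of total probability), one is left with $\tfrac{d}{dt}\text{D}_{\text{KL}}(p,\theta)=\sum_{i_\alpha}(\log u_{i_\alpha})\,\dot p_{i_\alpha}$, where $u_{i_\alpha}:=p_{i_\alpha}/\theta_{i_\alpha}$.

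Next I would substitute the master equation, replace $p_{j_\beta}$ by $\theta_{j_\beta}u_{j_\beta}$ everywhere, and symmetrize the resulting double sum by interchanging the dummy labels $i_\alpha\leftrightarrow j_\beta$ in the incoming-flux term; this puts the derivative in the form $\sum_{i_\alpha,j_\beta}\theta_{i_\alpha}q_{i_\alpha j_\beta}\,u_{i_\alpha}\bigl(\log u_{j_\beta}-\log u_{i_\alpha}\bigr)$. The tangent-line inequality for the concave logarithm, $\log u_{j_\beta}-\log u_{i_\alpha}\le(u_{j_\beta}-u_{i_\alpha})/u_{i_\alpha}$, then bounds this above by $\sum_{i_\alpha,j_\beta}\theta_{i_\alpha}q_{i_\alpha j_\beta}(u_{j_\beta}-u_{i_\alpha})$, and the invariance of $\theta$ (in flux-balance form $\sum_{i_\alpha}\theta_{i_\alpha}q_{i_\alpha j_\beta}=\theta_{j_\beta}\sum_{i_\alpha}q_{j_\beta i_\alpha}$) makes the two halves of this last sum equal — both collapse to $\sum_{i_\alpha}p_{i_\alpha}\sum_{j_\beta}q_{i_\alpha j_\beta}$ — so they cancel and $\tfrac{d}{dt}\text{D}_{\text{KL}}(p,\theta)\le0$, which is the assertion. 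A shorter, differentiation-free route yields the same conclusion: the semigroup $P(t)$ of $L$ is a stochastic kernel fixing $\theta$, and relative entropy is jointly convex hence contractive under stochastic kernels (data-processing), so $\text{D}_{\text{KL}}(p(t+s),\theta)=\text{D}_{\text{KL}}(p(t)P(s),\theta P(s))\le\text{D}_{\text{KL}}(p(t),\theta)$; this is essentially the argument in the cited references.

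I expect the only genuine difficulty to be analytic rather than algebraic. On the infinite state space of $L$ one must justify interchanging $\tfrac{d}{dt}$ with $\sum_{i_\alpha}$, the rearrangement of the doubly-infinite sum used in the symmetrization step, and the absolute convergence of every intermediate series; the degenerate terms where some $u_{i_\alpha}$ vanish also need the usual $0\log0=0$ bookkeeping. This is precisely what the standing hypothesis — that $p_{i_\alpha}(0)$ is concentrated enough for all quantities in play to be finite — is there to supply; granting it, the Fubini- and dominated-convergence-type justifications are routine, and for the finite chain $M$ they are automatic.
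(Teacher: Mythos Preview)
Your proposal is correct and essentially identical to the paper's proof: both differentiate $\text{D}_{\text{KL}}(p,\theta)$, substitute the master equation, symmetrize the double sum, apply the tangent-line inequality $\log y\ge 1-1/y$ (your $\log u_{j_\beta}-\log u_{i_\alpha}\le (u_{j_\beta}-u_{i_\alpha})/u_{i_\alpha}$ is the same bound), and then use stationarity of $\theta$ to collapse the right-hand side to zero. Your data-processing alternative is a legitimate shortcut not spelled out in the paper, and your remarks on the analytic justifications (interchange of sum and derivative, absolute convergence) are exactly the points the paper sweeps under its ``concentrated enough initial distribution'' hypothesis.
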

\begin{proof}
	\begin{eqnarray*}
		&& \frac{\mathrm{d}}{\mathrm{d}t}\sum_{i}\sum_\alpha p_{i_\alpha}(t)\log\frac{p_{i_\alpha}(t)}{\theta_{i_\alpha}} 
		\\
		&=& \sum_{i}\sum_\alpha \frac{\mathrm{d}p_{i_\alpha}(t)}{\mathrm{d}t}\log\frac{p_{i_\alpha}(t)}{\theta_{i_\alpha}} +\sum_{i}\sum_\alpha \frac{\mathrm{d}p_{i_\alpha}(t)}{\mathrm{d}t}
		\\
		&=& -\sum_{i}\sum_\alpha \sum_{j_\beta\sim i_\alpha} [p_{i_\alpha}(t)q_{ij}-p_{j_\beta}(t)q_{ji}]\log\frac{p_{i_\alpha}(t)}{\theta_{i_\alpha}}
		\\
		&=&-\frac{1}{2}\sum_{i}\sum_\alpha \sum_{j_\beta\sim i_\alpha} \left[p_{i_\alpha}(t)q_{ij}-p_{j_\beta}(t)q_{ji}\right]\left[\log\frac{p_{i_\alpha}(t)}{\theta_{i_\alpha}}-\log\frac{p_{j_\beta}(t)}{\theta_{j_\beta}}\right]
	\end{eqnarray*}
	\begin{eqnarray*}
		&=& -\sum_{i}\sum_\alpha \sum_{j_\beta\sim i_\alpha} p_{i_\alpha}(t)q_{ij}\log\frac{p_{i_\alpha}(t) \theta_{j_\beta}}{\theta_{i_\alpha} p_{j_\beta}(t)}\le -\sum_{i}\sum_\alpha \sum_{j_\beta\sim i_\alpha} p_{i_\alpha}(t)q_{ij}\left[1-\frac{\theta_{i_\alpha} p_{j_\beta}(t)}{p_{i_\alpha}(t) \theta_{j_\beta}}\right]
		\\
		&=& -\sum_i\sum_j\bar{p}_i(t) q_{ij}+\sum_{j}\sum_\beta \sum_{i_\alpha\sim j_\beta} \frac{q_{ij}\theta_{i_\alpha} p_{j_\beta}(t)}{\theta_{j_\beta}}
		\\
		&=&-\sum_i\sum_j\bar{p}_i(t) q_{ij}+\sum_j \sum_\beta \frac{p_{j_\beta}(t)}{\theta_{j_\beta}}\sum_i q_{ji}\theta_{j_\beta} 
		\\
		&=& -\sum_i\sum_j\bar{p}_i(t) q_{ij}+\sum_j\sum_i\bar{p}_j(t) q_{ji}=0.
	\end{eqnarray*}

	The inequality is from $\log y\ge 1-1/y$. The equality holds if and only if $p_{i_\alpha}(t)=c\theta_{i_\alpha}$ for a constant $c$.
	\qed
\end{proof}

Then we can prove the following result:
\begin{proposition}
	\label{nsm}
	$L$ has no stationary probability distribution.
\end{proposition}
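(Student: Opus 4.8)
The plan is to derive a contradiction from the assumption that a stationary probability distribution $p^*$ on $L$ exists. The key structural fact is that $L$ covers $M$ with infinitely many sheets indexed by $\alpha\in\mathbb{Z}^n$ (this requires $n\ge 1$, i.e. $M$ is genuinely driven; the detailed-balance degenerate case can be noted separately), and that $L$, being detailed balanced with global potential $\varphi$, has the Gibbs measure $\mu_{i_\alpha}=e^{-\varphi_{i_\alpha}}$ as an invariant measure.

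First I would invoke Lemma \ref{voigt} with $\theta=\mu$: if $p^*$ were stationary, then $\mathrm{D}_{\mathrm{KL}}(p^*,\mu)$ is constant in $t$, hence the monotone derivative must vanish identically, which by the equality clause forces $p^*_{i_\alpha}=c\,\mu_{i_\alpha}$ for some constant $c>0$. Thus any stationary probability distribution would have to be a normalized multiple of the Gibbs measure $\mu$. Second, I would show $\mu$ is not normalizable, i.e. $\sum_{i}\sum_{\alpha}\mu_{i_\alpha}=\infty$. This is where the potential structure does the work: the potential $\varphi_{i_\alpha}$ changes only when one crosses a special (non-spanning-tree) edge, and by Theorem \ref{T1}/\ref{T2} the coordinate $\alpha\in\mathbb{Z}^n$ is tracked precisely by these crossings, so $\varphi_{i_\alpha}$ is, up to the bounded spanning-tree contribution, a linear functional of $\alpha$ with coefficients $\log(q_{ji}/q_{ij})$ summed around independent cycles. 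Since $M$ is not detailed balanced (it has nonzero cycle affinities — otherwise $n=0$ or the problem is trivial), along at least one lattice direction $\varphi$ decreases without bound, so $e^{-\varphi}$ does not decay; summing over the $\mathbb{Z}^n$ fibers diverges. Hence no constant multiple of $\mu$ is a probability distribution, contradicting the conclusion of the first step.

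The main obstacle is the second step: making rigorous that $\sum_\alpha e^{-\varphi_{i_\alpha}}=\infty$. One has to be careful because the cycle affinities could in principle conspire so that $\varphi$ is bounded below along some directions; the point is that $\varphi$ restricted to the lattice $\mathbb{Z}^n$ of fiber-coordinates is an \emph{exact} linear function (the lifted process has a genuine \emph{global} potential, with no periodic part), so either it is identically constant on the lattice — which happens exactly when every independent cycle has zero affinity, i.e. $M$ is detailed balanced and $L=M$ up to trivial lifting, a case one excludes or handles directly — or it is a nonconstant linear function on $\mathbb{Z}^n$, and then the sublevel sets $\{\alpha:\varphi_{i_\alpha}\le C\}$ are infinite, forcing divergence of $\sum_\alpha e^{-\varphi_{i_\alpha}}$. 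I would phrase this via: pick a cycle $\mathcal{C}$ with affinity $A=\sum_{\mathcal{C}}\log(q_{ji}/q_{ij})\ne 0$; traversing $\mathcal{C}$ $k$ times moves the state to a new sheet with potential decreased by $kA$ (choosing orientation so $A>0$), giving infinitely many states with $\mu$-weight $\ge e^{-\varphi_{i_0}}$, so the total mass is infinite.

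Finally, for completeness I would remark on the boundary case: if $M$ itself is detailed balanced then its first Betti number may still be positive but all cycle affinities vanish, the global potential on $M$ already exists and is proper after at most a trivial identification, and then either $L=M$ is finite with a genuine stationary probability (so the hypothesis ``$M$ is a driven system'' in the statement is what rules this out), or one checks directly that the lift is still infinite with $\mu$ constant on fibers hence non-normalizable. Either way the statement as intended — for a truly driven (non-detailed-balanced) chain $M$ — follows, and I would state explicitly at the start of the proof that ``driven'' means at least one independent cycle carries nonzero affinity, so that Step 2's dichotomy always lands on the divergent branch.
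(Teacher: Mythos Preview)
Your approach is correct and shares the paper's high-level structure: invoke Lemma~\ref{voigt} on a putative stationary distribution to force proportionality with a chosen invariant measure, then argue that measure is non-normalizable. The difference is in \emph{which} invariant measure you pick. You use the Gibbs measure $\mu=e^{-\varphi}$, which forces you into an analysis of how $\varphi$ behaves across sheets, the affine dependence on $\alpha\in\mathbb{Z}^n$, and the case split on whether the cycle affinities vanish. The paper instead uses the \emph{periodic} invariant measure $\pi_{i_\alpha}=\bar\pi_i$, for which non-normalizability is a one-liner: $\sum_\alpha \pi_{i_\alpha}=\sum_{\alpha\in\mathbb{Z}^n} \bar\pi_i=\infty$. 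This sidesteps what you called your ``main obstacle'' entirely and works uniformly whether or not $M$ is detailed balanced (as long as $n\ge1$), so no boundary case needs separate treatment. Your route does extract extra structural information about $\mu$ --- that $\varphi_{i_\alpha}$ is affine in $\alpha$ with slope given by the cycle affinities --- but that is not needed for this proposition. Also, your handling of the detailed-balanced boundary case is slightly off: $L$ is still the infinite $\mathbb{Z}^n$-lift when $M$ happens to be detailed balanced; it is never equal to $M$ once $n\ge1$, so the correct resolution there is the one you mention second (constant $\mu$ on infinite fibers), not the exclusion by hypothesis.
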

\begin{proof}
	Assume there is a stationary probability distribution $\eta_{i_\alpha}$. Let $p(t)$ be the stationary distribution, and $\theta$ be $\pi$, then $\text{D}_{\text{KL}}(p,\pi)$ is a constant. This is true only if the equality holds in Lemma \ref{voigt}, which means $\eta_{i_\alpha}\pi_j=\eta_{j_\beta}\pi_i$. Thus $\eta$ and $\pi$ only differ by a constant multiple. $\pi$ is non-normalizable, so is $\eta$.
	\qed
\end{proof}

\subsection{Instantaneous entropy production rate, free energy, and housekeeping heat}

For $L$ with probability distribution $p_{i_\alpha}(t)$, one could define several thermodynamic quantities: entropy production rate, free energy, and housekeeping heat. 

\begin{definition}
	The instantaneous free energy with respect to stationary measure $\theta$, $F^\theta(t)$, is defined as $F^\theta(t)=\text{D}_{\text{KL}}(p(t),\theta)$. Its time derivative is
	$$\mathrm{d}F^\theta(t)/\mathrm{d}t=-\frac{1}{2}\sum_{j_\beta}\sum_{i_\alpha\sim j_\beta} [p_{i_\alpha}(t)q_{ij}-p_{j_\beta}(t)q_{ji}]\log\frac{p_{i_\alpha}(t) \theta_{j_\beta}}{p_{j_\beta}(t) \theta_{i_\alpha}}.$$
\end{definition}
\begin{definition}
	The instantaneous entropy production rate $e_p(t)$ is defined as \cite{JQQ}
	$$e_p(t)=\frac{1}{2}\sum_{j_\beta}\sum_{i_\alpha\sim j_\beta}\big[p_{i_\alpha}(t)q_{ij}-p_{j_\beta}(t)q_{ji}\big]\log\frac{p_{i_\alpha}(t)q_{ij}}{p_{j_\beta}(t)q_{ji}}.$$
\end{definition}
This definition is derived from the original idea of entropy production rate that it describes the difference between a process and its time inverse.
\begin{definition}
	The instantaneous housekeeping heat with respect to stationary measure $\theta$, $Q_{hk}^\theta(t)$, is defined as 
	$$Q_{hk}^\theta(t)=e_p(t)+\mathrm{d}F^\theta(t)/\mathrm{d}t=\frac{1}{2}\sum_{j_\beta}\sum_{i_\alpha\sim j_\beta}\big[p_{i_\alpha}(t)q_{ij}-p_{j_\beta}(t)q_{ji}\big]\log\frac{\theta_{i_\alpha}q_{ij}}{\theta_{j_\beta}q_{ji}}.$$
\end{definition}
In general stationary measure is not unique, therefore one could have different versions of free energy and housekeeping heat.

We can also define these thermodynamic quantities for $M$ with $\theta=\bar{\pi}$:  $\bar{F}(t)$, $\bar{e_p}(t)$ and $\bar{Q}_{hk}(t)$.

From Lemma \ref{voigt}, $\mathrm{d}F^\theta(t)/\mathrm{d}t\le 0$. From the definition of instantaneous entropy production rate, $e_p(t)\ge 0$. For $Q_{hk}^\theta(t)$, we have the same result.

\begin{proposition}
	$Q_{hk}^\theta(t)\ge 0$.
\end{proposition}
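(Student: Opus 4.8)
The plan is to prove $Q_{hk}^\theta(t)\ge 0$ by the same convexity trick used in the proof of Lemma \ref{voigt}. Recall that
$$Q_{hk}^\theta(t)=\frac{1}{2}\sum_{j_\beta}\sum_{i_\alpha\sim j_\beta}\big[p_{i_\alpha}(t)q_{ij}-p_{j_\beta}(t)q_{ji}\big]\log\frac{\theta_{i_\alpha}q_{ij}}{\theta_{j_\beta}q_{ji}}.$$
First I would symmetrize the sum over the unordered pair $\{i_\alpha,j_\beta\}$ so that each edge contributes a single term of the form $(a-b)\log(a'/b')$, where $a=p_{i_\alpha}q_{ij}$, $b=p_{j_\beta}q_{ji}$, $a'=\theta_{i_\alpha}q_{ij}$, $b'=\theta_{j_\beta}q_{ji}$; here the point is that $a/a'=p_{i_\alpha}/\theta_{i_\alpha}$ and $b/b'=p_{j_\beta}/\theta_{j_\beta}$ are ``one-sided'' ratios. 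Equivalently, writing $u=p_{i_\alpha}/\theta_{i_\alpha}$ and $v=p_{j_\beta}/\theta_{j_\beta}$, the edge term becomes $(a'u-b'v)\log(a'/b')$ with $a',b'>0$, and one must show $\sum_{\text{edges}}(a'u-b'v)\log(a'/b')\ge 0$.

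The key step is to handle the logarithm with the elementary inequality $\log y\ge 1-1/y$ (the same one invoked in Lemma \ref{voigt}), applied after rewriting $\log(\theta_{i_\alpha}q_{ij}/\theta_{j_\beta}q_{ji})$ in a form that isolates the ratio appearing in the bracket. Concretely, I would split $\log\frac{\theta_{i_\alpha}q_{ij}}{\theta_{j_\beta}q_{ji}}=\log\frac{p_{i_\alpha}q_{ij}}{p_{j_\beta}q_{ji}}-\log\frac{p_{i_\alpha}\theta_{j_\beta}}{p_{j_\beta}\theta_{i_\alpha}}$, so that
$$Q_{hk}^\theta(t)=e_p(t)+\mathrm{d}F^\theta(t)/\mathrm{d}t,$$
which is exactly the defining identity; hence $Q_{hk}^\theta(t)\ge 0$ would follow immediately if one knew $e_p(t)\ge 0$ and $\mathrm{d}F^\theta/\mathrm{d}t\le 0$ — but $\mathrm{d}F^\theta/\mathrm{d}t\le 0$ has the wrong sign. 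So instead I would mimic the bound directly: starting from the symmetrized sum, apply $-(a-b)\log(b/a)\ge (a-b)(1-a/b)\cdot(-1)$ appropriately, i.e. use $(a-b)\log(a/b)\ge (a-b)(1-b/a)$ when this is correctly oriented, and then reindex the resulting rational sum exactly as in Lemma \ref{voigt} — the $-\sum\bar p_i q_{ij}$ term and the $+\sum\bar p_j q_{ji}$ term cancel by conservation of probability and the detailed-balance-type relation $\sum_i q_{ji}=\sum_i q_{ij}$ is not needed, only $\sum_j(p_{i_\alpha}q_{ij}-p_{j_\beta}q_{ji})=\mathrm{d}p_{i_\alpha}/\mathrm{d}t$ summed to zero. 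This is the standard argument that housekeeping heat is nonnegative (see \cite{ge-qian-10,NESS1}), transcribed to the lifted state space.

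The main obstacle is bookkeeping of the one-sided ratios: unlike in Lemma \ref{voigt}, the logarithm in $Q_{hk}^\theta$ involves $\theta$ and $q$ but not $p$, whereas the bracket involves $p$ and $q$ but not $\theta$, so the convexity estimate must be set up so that the ``$1-1/y$'' substitution produces a telescoping rational expression. The trick is to choose $y=\dfrac{p_{i_\alpha}\theta_{j_\beta}q_{ji}}{p_{j_\beta}\theta_{i_\alpha}q_{ij}}$ is \emph{not} quite right; rather one wants the argument so that after bounding, the surviving terms are $\pm p_{j_\beta}q_{ji}\,\theta_{i_\alpha}/\theta_{j_\beta}$ paired across edges, which sum to zero because $\theta$ is a stationary measure: $\sum_{i_\alpha\sim j_\beta} q_{ij}\theta_{i_\alpha}=\sum_{i_\alpha\sim j_\beta} q_{ji}\theta_{j_\beta}$. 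Making this reindexing precise — and checking that the stationarity of $\theta$ (not of $p$) is what kills the cross terms — is the one place where care is required; everything else is the same three-line manipulation already displayed in the proof of Lemma \ref{voigt}.
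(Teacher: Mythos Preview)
Your approach is essentially the paper's: apply $\log y\ge 1-1/y$ and use stationarity of $\theta$ to kill the remaining terms. But you circle around the clean step without ever landing on it. The paper simply unsymmetrizes first---since both the bracket $[p_{i_\alpha}q_{ij}-p_{j_\beta}q_{ji}]$ and the factor $\log\frac{\theta_{i_\alpha}q_{ij}}{\theta_{j_\beta}q_{ji}}$ are antisymmetric under $(i_\alpha)\leftrightarrow(j_\beta)$, the $\tfrac12$-sum collapses to
\[
Q_{hk}^\theta(t)=\sum_{i_\alpha}\sum_{j_\beta\sim i_\alpha} p_{i_\alpha}(t)q_{ij}\,\log\frac{\theta_{i_\alpha}q_{ij}}{\theta_{j_\beta}q_{ji}},
\]
and then the ``correct $y$'' you were searching for is just the argument of the logarithm itself. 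Applying $\log y\ge 1-1/y$ gives $\sum p_{i_\alpha}q_{ij}-\sum \frac{p_{i_\alpha}}{\theta_{i_\alpha}}\theta_{j_\beta}q_{ji}$, and the second sum equals the first precisely because $\theta$ is stationary: $\sum_{j_\beta\sim i_\alpha}\theta_{j_\beta}q_{ji}=\sum_j \theta_{i_\alpha}q_{ij}$. Your detours through $e_p+\mathrm{d}F^\theta/\mathrm{d}t$ and through $(a-b)\log(a/b)\ge(a-b)(1-b/a)$ (which is not termwise valid) are unnecessary; the whole proof is three lines once you unsymmetrize.
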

\begin{proof}
	\begin{eqnarray*}
		Q_{hk}^\theta(t) &=& \sum_{i}\sum_\alpha \sum_{j_\beta\sim i_\alpha} p_{i_\alpha}(t)q_{ij}\log\frac{\theta_{i_\alpha} q_{ij}}{\theta_{j_\beta} q_{ji}}\ge \sum_{i}\sum_\alpha \sum_{j_\beta\sim i_\alpha} p_{i_\alpha}(t)q_{ij}\left[1-\frac{\theta_{j_\beta} q_{ji}}{\theta_{i_\alpha} q_{ij}}\right]
		\\
		&=& \sum_i\sum_j\bar{p}_i(t) q_{ij}-\sum_i \sum_\alpha \frac{ p_{i_\alpha}(t)}{\theta_{i_\alpha}}\sum_{j_\beta\sim i_\alpha}q_{ji}\theta_{j_\beta} 
		\\
		&=&\sum_i\sum_j\bar{p}_i(t) q_{ij}-\sum_i \sum_\alpha \frac{ p_{i_\alpha}(t)}{\theta_{i_\alpha}}\sum_{j}q_{ij}\theta_{i_\alpha}
		\\
		&=& \sum_i\sum_j\bar{p}_i(t) q_{ij}-\sum_i\sum_j\bar{p}_i(t) q_{ij}=0.
	\end{eqnarray*}
	
	The inequality is from $\log y\ge 1-1/y$.
	\qed
\end{proof}

Thus we have the decomposition
$$e_p(t)=Q_{hk}^\theta(t)+[-\mathrm{d}F^\theta(t)/\mathrm{d}t],$$
where each term is non-negative.

\subsection{Time limits of thermodynamic quantities}

Since $\bar{p}(t)$ converges to $\bar{\pi}$, $\bar{F}(t)$ and $\mathrm{d}\bar{F}(t)/\mathrm{d}t$ converge to $0$, $\bar{e_p}(t)$ and $\bar{Q}_{hk}(t)$ converge to the stationary entropy production rate 
$$\bar{e_p}=\frac{1}{2}\sum_i\sum_j[\bar{\pi}_iq_{ij}-\bar{\pi}_j q_{ji}]\log\frac{\bar{\pi}_iq_{ij}}{\bar{\pi}_j q_{ji}}.$$

For $L$, $p(t)$ does not converge to a stationary distribution, therefore we do not have the stationary version of these quantities. However, we can still study their behavior as $t\to \infty$.

If we set $\theta$ to be the periodic stationary measure $\pi$, then $Q_{hk}^\pi(t)$ converges to
$$\frac{1}{2}\sum_i\sum_j[\bar{\pi}_iq_{ij}-\bar{\pi}_j q_{ji}]\log\frac{\bar{\pi}_iq_{ij}}{\bar{\pi}_j q_{ji}},$$
which is just $\bar{e_p}$.

If we set $\theta$ to be the detailed balanced stationary measure $\mu$, then $Q_{hk}^\mu(t)\equiv 0$ since $\mu_{i_\alpha}q_{ij}=\mu_{j_\beta}q_{ji}$.

For the time limit of $e_p(t)$, we have the following theorem. The proof is in the next part.

For a probability distribution $p_i$, its entropy is 
$$\text{H}[p]=\sum_{i} -p_{i}\log p_{i}.$$

\begin{theorem}
	\label{t2}
	Assume the initial distribution $p_{i_\alpha}(0)$ has finite entropy. Then $e_p(t)$ converges to $\bar{e_p}$ in Ces\`{a}ro's sense that $\lim_{T\to\infty}\frac{1}{T}\int_0^T e_p(t)\mathrm{d}t=\bar{e_p}$.
\end{theorem}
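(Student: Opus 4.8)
The plan is to exploit the decomposition $e_p(t)=Q_{hk}^\mu(t)+[-\rd F^\mu(t)/\rd t]$ obtained in the previous subsection for the special choice $\theta=\mu$, the detailed-balanced Gibbs measure on the lift $L$. Since $Q_{hk}^\mu(t)\equiv 0$, we have the clean identity $e_p(t)=-\rd F^\mu(t)/\rd t$, so that $\frac1T\int_0^T e_p(t)\,\rd t = \frac{1}{T}\big(F^\mu(0)-F^\mu(T)\big)$. Thus the whole problem reduces to showing that $F^\mu(T)/T\to 0$ as $T\to\infty$; equivalently, that $F^\mu(T)$ grows sublinearly. Wait—this is not quite what the theorem claims, since the theorem says the Ces\`aro limit is $\bar{e_p}$, not $0$. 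The resolution is that $F^\mu$ is \emph{not} the right free energy to track: $F^\mu = \mathrm{D}_{\mathrm{KL}}(p,\mu)$ decreases but its rate is $e_p(t)$ itself, which tends to the positive constant $\bar{e_p}$, so $F^\mu(T)\sim -\bar{e_p}\,T$—but $\mathrm{D}_{\mathrm{KL}}\ge 0$, a contradiction unless one is careful that $\mu$ is non-normalizable and $F^\mu$ can go to $-\infty$. So in fact $F^\mu(T)= F^\mu(0)-\int_0^T e_p(t)\,\rd t$ really does tend to $-\infty$ linearly, and this identity alone gives $\frac1T\int_0^T e_p\,\rd t = -F^\mu(T)/T + o(1)$, which is a tautology. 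The actual content must come from comparing with the \emph{other} reference measure $\pi$.

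So the genuine strategy: write $F^\mu(t) = E(t) - \H[p(\cdot,t)] + \text{const}$, where $E(t)=\mathbb{E}[\varphi] = \sum_{i,\alpha} p_{i_\alpha}(t)\varphi_{i_\alpha}$ is the mean internal energy and $\H[p]$ is the Shannon entropy of the full distribution on $L$. The key decomposition is $e_p(t) = -\rd F^\mu/\rd t = -\dot E(t) + \rd\H[p]/\rd t$. For the first term: $\dot E(t) = \sum_{i,\alpha}\dot p_{i_\alpha}\varphi_{i_\alpha}$; because $\varphi_{i_\alpha}-\varphi_{j_\beta} = \log(q_{ji}/q_{ij})$ along edges, a direct computation of the master-equation flux shows $-\dot E(t)$ equals the expected potential drop rate, which one identifies (using $\bar p_i(t)\to\bar\pi_i$) as converging to $\bar{e_p}$. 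For the second term, the claim is that $\H[p(\cdot,t)]$ grows only like $\log t$, hence $\frac1T\int_0^T \rd\H/\rd t\,\rd t = \H[p(\cdot,T)]/T - \H[p(0)]/T \to 0$. Putting these together gives the Ces\`aro limit $\bar{e_p}$.

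Concretely I would carry out the steps in this order. First, establish the identity $e_p(t) = -\dot E(t) + \frac{\rd}{\rd t}\H[p(\cdot,t)]$ by unwinding the definitions of $e_p$ and $F^\mu$ and using $\mu_{i_\alpha}=e^{-\varphi_{i_\alpha}}$, $\mu_{i_\alpha}q_{ij}=\mu_{j_\beta}q_{ji}$. Second, show $\lim_{t\to\infty}\dot E(t) = -\bar{e_p}$: rewrite $\dot E(t) = \tfrac12\sum_{j_\beta}\sum_{i_\alpha\sim j_\beta}[p_{i_\alpha}q_{ij}-p_{j_\beta}q_{ji}](\varphi_{i_\alpha}-\varphi_{j_\beta})$, substitute $\varphi_{i_\alpha}-\varphi_{j_\beta}=\log(q_{ji}/q_{ij})$, collapse the $\alpha,\beta$ sums using $\bar p_i=\sum_\alpha p_{i_\alpha}$, and let $\bar p\to\bar\pi$. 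Third—the crux—bound the full entropy: $\H[p(\cdot,t)] = O(\log t)$. Here I would argue that the distribution $p(\cdot,t)$ on $L$, starting from a concentrated initial condition, spreads at most ballistically in the $\mathbb{Z}^n$ coordinate $\alpha$ (the coordinate changes only by crossing one of the finitely many special edges, at bounded rate), so $p_{i_\alpha}(t)$ is supported, up to exponentially small tails, on $\|\alpha\| \lesssim t$; combined with the maximum-entropy bound $\H \le \log(\#\text{support})$ and a Gaussian/large-deviation control of the tails (so the tail contribution to $\H$ is also $O(\log t)$), this yields $\H[p(\cdot,t)] \le C\log t$ for large $t$. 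Fourth, assemble: $\frac1T\int_0^T e_p\,\rd t = -\frac1T\int_0^T \dot E\,\rd t + \frac1T(\H[p(\cdot,T)]-\H[p(0)])$; the first average tends to $-(-\bar{e_p})=\bar{e_p}$ by Ces\`aro applied to the convergent integrand $\dot E(t)\to-\bar{e_p}$, and the second tends to $0$ by the $\log T$ bound and finiteness of $\H[p(0)]$.

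The main obstacle is the third step, the sublinear (indeed logarithmic) control of $\H[p(\cdot,t)]$. The ballistic support bound is easy, but it only gives $\H = O(\log(t^n)) = O(\log t)$ for the bulk; one must also ensure the far tails $\|\alpha\|\gg t$ contribute negligibly to the entropy, which requires a quantitative large-deviation or comparison estimate for how fast probability leaks out along the finitely many special edges—essentially controlling $\sum_{\|\alpha\|>Kt} p_{i_\alpha}(t)\log(1/p_{i_\alpha}(t))$. A clean way is to compare with an explicitly solvable "maximally spreading" chain (e.g. projecting onto the winding-number vector and dominating by a continuous-time random walk on $\mathbb{Z}^n$ with bounded jump rates), for which the entropy is computed directly and is $\tfrac n2\log t + O(1)$; monotonicity or a coupling then transfers the bound. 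I expect this is also where the paper's own remark—that $\rd S/\rd t\to 0$ cannot be shown in general, only the Ces\`aro statement—originates: one controls $\H[p(\cdot,T)]/T\to 0$ but not $\rd\H/\rd t\to 0$ pointwise.
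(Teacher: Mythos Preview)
Your proposal is correct and takes essentially the same route as the paper: both reduce the Ces\`aro convergence to the sublinear entropy bound $\H[p(\cdot,T)]=O(\log T)$, and your Step~3 outline (ballistic support via a bound on the number of jumps, plus exponential tail control) is exactly how the paper proves that bound. The only organizational difference is the choice of reference measure. You work with the detailed-balanced measure $\mu$ and the split $e_p=-\dot E+\dot\H$, which forces you to establish $\dot E(t)\to-\bar{e_p}$ as an ingredient of the proof; the paper instead works with the periodic measure $\pi$ and the identity $e_p(t)+\rd F^\pi/\rd t=\bar e_p(t)+\rd\bar F/\rd t$, which reaches the entropy bound more directly and records $\dot E\to-\bar{e_p}$ only afterward, as a separate proposition. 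One minor point you leave implicit: your entropy bound is argued for a point-mass initial condition, and the extension to an arbitrary finite-entropy initial law requires a mixture/Jensen step, which the paper isolates as its own short lemma.
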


In summary, $e_p=Q^\theta_{hk}+(-\mathrm{d}F^\theta/\mathrm{d}t)$, where $e_p$, $Q^\theta_{hk}$ and $-\mathrm{d}F^\theta/\mathrm{d}t$ are non-negative.

$e_p\to\bar{e_p}$ in Ces\`{a}ro's sense, $\mathrm{d}F^\mu/\mathrm{d}t=-e_p\to -\bar{e_p}$ in Ces\`{a}ro's sense, $Q_{hk}^\mu\equiv 0$, $\mathrm{d}F^\pi/\mathrm{d}t\to 0$ in Ces\`{a}ro's sense, $Q_{hk}^\pi\to \bar{e_p}$ in general sense.

Therefore, the periodic stationary measure $\pi$ and the detailed balanced stationary measure $\mu$ reach the maximum and minimum of $Q_{hk}^\theta$, namely $\bar{e_p}$ and $0$, as $t\to \infty$.

\subsection{Proof of Theorem \ref{t2}}
The proof consists of the following Lemmas \ref{le8}-\ref{l00}.

The key idea is to transform the convergence of entropy production rate into the control of entropy.

\begin{lemma}
	Assume the initial distribution $p(0)$ has finite entropy. Then 
	$$\lim_{T\to\infty}\frac{1}{T}\int_0^T e_p(t)\mathrm{d}t=\bar{e_p}\Longleftrightarrow \lim_{T\to\infty}\text{H}[p(T)]/T=0.$$
	\label{le8}
\end{lemma}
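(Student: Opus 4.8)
The plan is to connect the time-averaged entropy production rate $\frac{1}{T}\int_0^T e_p(t)\,\mathrm{d}t$ to a telescoping quantity and then isolate the entropy term. First I would recall the decomposition $e_p(t) = Q_{hk}^\mu(t) + (-\mathrm{d}F^\mu(t)/\mathrm{d}t)$ with the detailed-balanced measure $\mu = e^{-\varphi}$, for which $Q_{hk}^\mu(t)\equiv 0$, so $e_p(t) = -\mathrm{d}F^\mu(t)/\mathrm{d}t$. Integrating over $[0,T]$ gives $\int_0^T e_p(t)\,\mathrm{d}t = F^\mu(0) - F^\mu(T)$. Now $F^\mu(t) = \mathrm{D}_{\mathrm{KL}}(p(t),\mu) = \sum_{i,\alpha} p_{i_\alpha}(t)\log p_{i_\alpha}(t) + \sum_{i,\alpha} p_{i_\alpha}(t)\varphi_{i_\alpha} = -\H[p(t)] + E(t)$, where $E(t) = \mathbb{E}[\varphi]$ is the mean potential. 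Dividing by $T$:
\begin{equation*}
\frac{1}{T}\int_0^T e_p(t)\,\mathrm{d}t = \frac{F^\mu(0)}{T} - \frac{E(T)}{T} + \frac{\H[p(T)]}{T}.
\end{equation*}
The term $F^\mu(0)/T \to 0$ since the initial free energy is finite (the initial distribution has finite entropy and is concentrated enough that $E(0)$ is finite).

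Next I would identify $\lim_{T\to\infty} E(T)/T$ with $\bar{e_p}$. The idea is that the potential $\varphi_{i_\alpha}$ grows linearly in the lattice coordinate $\alpha\in\mathbb{Z}^n$, and $\alpha(t)$ is, up to bounded fluctuations, the accumulated cycle-winding vector of the folded trajectory; its drift is governed by the stationary net cycle fluxes of $M$. Concretely, $\mathrm{d}E(t)/\mathrm{d}t = \sum_{i,\alpha}\sum_{j_\beta\sim i_\alpha} p_{i_\alpha}(t)q_{ij}(\varphi_{j_\beta} - \varphi_{i_\alpha})$, and since $\varphi_{j_\beta}-\varphi_{i_\alpha} = \log(q_{ji}/q_{ij})$ by the defining property of the global potential, this becomes $\mathrm{d}E(t)/\mathrm{d}t = \sum_{i,\alpha}\sum_{j_\beta\sim i_\alpha} p_{i_\alpha}(t)q_{ij}\log\frac{q_{ji}}{q_{ij}} = \sum_{i}\sum_{j\sim i}\bar{p}_i(t)q_{ij}\log\frac{q_{ji}}{q_{ij}}$, which depends only on the folded marginal $\bar p(t)$. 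As $t\to\infty$, $\bar p(t)\to\bar\pi$, so $\mathrm{d}E(t)/\mathrm{d}t \to \sum_i\sum_{j\sim i}\bar\pi_i q_{ij}\log\frac{q_{ji}}{q_{ij}}$, and by symmetrizing (using $\bar\pi$-balance of the total flux, exactly as in the housekeeping-heat computations already in the paper) this limit equals $\frac{1}{2}\sum_i\sum_j(\bar\pi_i q_{ij} - \bar\pi_j q_{ji})\log\frac{q_{ji}}{q_{ij}}$. I should reconcile the sign: combining with the fact that $\bar e_p = \frac{1}{2}\sum_{i,j}(\bar\pi_i q_{ij}-\bar\pi_j q_{ji})\log\frac{\bar\pi_i q_{ij}}{\bar\pi_j q_{ji}}$ and that the ``$\log(\bar\pi_i/\bar\pi_j)$'' part of this sum telescopes to zero against the stationary flux, one gets $\lim \mathrm{d}E(t)/\mathrm{d}t = \bar e_p$ (the overall dissipation is positive). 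By Cesàro's theorem for convergent derivatives, $\lim_{T\to\infty} E(T)/T = \bar e_p$.

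Putting the two pieces together yields
\begin{equation*}
\frac{1}{T}\int_0^T e_p(t)\,\mathrm{d}t - \bar e_p = \frac{F^\mu(0)}{T} + \left(\bar e_p - \frac{E(T)}{T}\right) + \frac{\H[p(T)]}{T},
\end{equation*}
where the first two bracketed quantities on the right vanish as $T\to\infty$. Hence $\frac{1}{T}\int_0^T e_p(t)\,\mathrm{d}t \to \bar e_p$ if and only if $\H[p(T)]/T\to 0$, which is the claimed equivalence. The main obstacle — and the reason this lemma is stated separately rather than immediately giving Theorem \ref{t2} — is that $\H[p(T)]$ is a priori not obviously $o(T)$: the lifted process spreads over the infinite lattice $\mathbb{Z}^n$ and its entropy genuinely grows, so one needs a quantitative bound (this is where the ``controlled by $\log t$'' claim and the finite-initial-entropy hypothesis enter, handled in the subsequent lemmas). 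For the present statement, though, the only delicate point is ensuring $E(T)$ and $F^\mu(0)$ are finite and that $\mathrm{d}E/\mathrm{d}t$ genuinely converges (not merely stays bounded) so that Cesàro applies; this follows from the exponential convergence $\bar p(t)\to\bar\pi$ for the finite irreducible chain $M$.
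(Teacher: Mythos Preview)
Your approach is correct and differs from the paper's in an interesting way. The paper works with the \emph{periodic} stationary measure $\pi$ rather than the detailed-balanced $\mu$: it uses the identity $Q_{hk}^\pi(t)=\bar Q_{hk}(t)$, i.e.\ $\mathrm{d}F^\pi/\mathrm{d}t + e_p(t) = \mathrm{d}\bar F/\mathrm{d}t + \bar e_p(t)$, integrates over $[0,T]$, and then observes that $F^\pi(T)+\H[p(T)] = -\sum_i\bar p_i(T)\log\bar\pi_i$ is bounded while $\bar F(T)\to 0$. This route avoids computing $\mathrm{d}E/\mathrm{d}t$ altogether; everything reduces to boundedness of finite-state quantities. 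Your route via $\mu$ is conceptually transparent (free energy $=$ energy minus entropy) and equally valid, but it front-loads the computation of $\lim_{t\to\infty}\mathrm{d}E/\mathrm{d}t$, which the paper instead establishes as a separate Proposition \emph{after} Theorem~\ref{t2}.

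One caveat on execution: you have two sign slips that happen to cancel. The correct limit is $\mathrm{d}E/\mathrm{d}t\to -\bar e_p$ (as the paper's Proposition also states), since
\[
\tfrac{1}{2}\sum_{i,j}(\bar\pi_i q_{ij}-\bar\pi_j q_{ji})\log\frac{q_{ji}}{q_{ij}}=-\bar e_p;
\]
and in your final displayed rearrangement the middle bracket should read $-\bar e_p - E(T)/T$, not $\bar e_p - E(T)/T$. With both corrections the conclusion is unchanged.
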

\begin{proof}
	We have 
	$$\frac{\mathrm{d}F^\pi(t)}{\mathrm{d}t}+e_p(t)=\frac{1}{2}\sum_i\sum_j[\bar{p}_i(t)q_{ij}-\bar{p}_j(t)q_{ji}]\log\frac{\bar{\pi}_i q_{ij}}{\bar{\pi}_j q_{ji}}=\frac{\mathrm{d}\bar{F}(t)}{\mathrm{d}t}+\bar{e_p}(t).$$
	
	Therefore
	\begin{eqnarray*}
		&& \frac{1}{T}\int_0^T e_p(t)\mathrm{d}t-\frac{1}{T}\int_0^T \bar{e_p}(t)\mathrm{d}t
		\\
		&=& \frac{1}{T}\Big[F^\pi(0)-F^\pi(T)+\bar{F}(T)-\bar{F}(0)\Big].
	\end{eqnarray*}

	Since $\bar{p}(t)$ converges to $\bar{\pi}$ and $\bar{F}(t)$ converges to $0$, $\bar{F}(T)$ is bounded.
	
	$F^\pi(0)-\bar{F}(0)=\text{H}[\bar{p}(0)]-\text{H}[p(0)]$, which is finite. 
	
	$F^\pi(T)+\text{H}[p(T)]=-\sum_i \bar{p}_i(T) \log \bar{\pi}_i$, which is bounded. Thus 
	$$\lim_{T\to\infty}F^\pi(T)/T=-\lim_{T\to\infty}\text{H}[p(T)]/T.$$
	
	We also have $\lim_{T\to\infty}\frac{1}{T}\int_0^T \bar{e_p}(t)\mathrm{d}t=\bar{e_p}$.
	
	Therefore 
	$$\lim_{T\to\infty}\frac{1}{T}\int_0^T e_p(t)\mathrm{d}t-\bar{e_p}=-\lim_{T\to\infty}\text{H}[p(T)]/T.$$
	
	Since the initial distribution has finite entropy, all quantities used are finite.
	
	\qed
\end{proof}

From this proof, we can see that $e_p(t)\to \bar{e_p}\Longleftrightarrow \mathrm{d}\text{H}[p(t)]/\mathrm{d}t\to 0$. The techniques we use can prove $\text{H}[p(t)]/t\to 0$, but not $\mathrm{d}\text{H}[p(t)]/\mathrm{d}t\to 0$. Thus we do not have $e_p(t)\to \bar{e_p}$.

\begin{lemma}
	There exists a constant $C$, such that if $L$ starts from a single state, $\text{H}[p(t)]\le C\log t$.
	\label{llo}
\end{lemma}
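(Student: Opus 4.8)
\textbf{Proof plan for Lemma \ref{llo}.}

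The plan is to show that the distribution $p(t)$ of the lifted chain $L$, started from a single state, cannot spread out faster than polynomially in $t$, so that its entropy—being the entropy of a distribution effectively supported on $O(t^k)$ states with mass concentrated near a ball of radius $O(t)$—grows at most like $\log t$. The first step is a geometric observation: in $L$, each transition either leaves the $\mathbb{Z}^n$-coordinate $\alpha$ unchanged (moving within a copy of the spanning tree, of which there are $|V|$ states per hypercube) or changes $\alpha$ by $\pm e_m$ along one of the $n$ special edges. Hence after the chain has made $N$ jumps, the coordinate $\alpha$ satisfies $\|\alpha\|_\infty \le N$, so $p(t)$ is supported on at most $|V|\cdot(2N+1)^n$ states conditioned on exactly $N$ jumps having occurred.

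Next I would control the number of jumps. Since $M$ (hence $L$) has finite state space with bounded rates, let $\lambda = \max_i \sum_j q_{ij}$ be the maximal total exit rate. The number of jumps $N(t)$ up to time $t$ is stochastically dominated by a Poisson process of rate $\lambda$, so $\mathbb{P}(N(t) \ge N)$ decays faster than any polynomial once $N \gg \lambda t$; concretely $\mathbb{P}(N(t)\ge N) \le e^{-N}$ for $N \ge C_0 \lambda t$ with a universal $C_0$, by the standard Chernoff bound for Poisson tails. Then I would split the entropy. Writing $p(t) = \sum_{N\ge 0} \mathbb{P}(N(t)=N)\, p^{(N)}(t)$ as a mixture over the jump count, use the concavity bound $\H[p(t)] \le \H[\{\mathbb{P}(N(t)=N)\}_N] + \sum_N \mathbb{P}(N(t)=N)\,\H[p^{(N)}(t)]$. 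The first term is the entropy of a Poisson-dominated law, which is $O(\log(\lambda t + 1)) = O(\log t)$. For the second term, $\H[p^{(N)}(t)] \le \log\big(|V|(2N+1)^n\big) \le n\log(2N+1) + \log|V|$ since $p^{(N)}(t)$ is supported on at most that many states; taking the expectation over $N$ and using that $N(t)$ has all moments bounded by powers of $\lambda t$, together with the super-polynomial tail decay to handle the range $N \gg \lambda t$, gives $\mathbb{E}[\,n\log(2N(t)+1)\,] \le n\log(2\mathbb{E}[N(t)]+1) + O(1) = O(\log t)$ by Jensen. Summing, $\H[p(t)] \le C\log t$ for $t$ large, and adjusting $C$ covers small $t$ since $\H[p(t)]$ is finite and continuous there.

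The main obstacle is the tail contribution: the uniform support bound $\log(|V|(2N+1)^n)$ grows with $N$, so one must verify that the rare events $\{N(t) = N\}$ for $N$ much larger than $\lambda t$ contribute only $O(1)$—not more—to the averaged entropy. This is exactly where the exponential (super-polynomial) decay of the Poisson tail is essential: $\sum_{N \ge C_0\lambda t} e^{-N}\big(n\log(2N+1)+\log|V|\big)$ is easily $O(1)$, in fact $o(1)$. A secondary technical point is legitimizing the mixture decomposition of the entropy for the (countably) infinite state space of $L$; this is fine because the finite-entropy hypothesis propagates—indeed $p(0)$ is a point mass here—and all sums are of nonnegative terms, so Tonelli applies and no integrability issue arises. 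Once $\H[p(t)] \le C\log t$ is established for a single-state start, it feeds directly into the proof of Lemma \ref{l00} and thence Theorem \ref{t2} via Lemma \ref{le8}, since $\log t / t \to 0$.
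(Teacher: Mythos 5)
Your proposal is correct and follows essentially the same route as the paper: stochastic domination of the jump count by a Poisson variable of rate $O(t)$, the observation that the lifted chain can reach only $O(|V|\,N^n)$ states after $N$ jumps, and a split of the entropy into a bulk term of order $\log t$ plus a negligible tail controlled by the Poisson tail decay. The only (cosmetic) difference is bookkeeping: you condition on the jump count and apply the mixture-entropy inequality plus Jensen, whereas the paper partitions states by graph distance from the origin and sums the tail contribution explicitly via a Stirling bound.
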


\begin{proof}
	Without loss of generality, we assume that $\max\{q_i\}\le 1$. The idea is to show that at time $t$, the contribution to entropy from states close to initial state (no more than $3t$ steps away) is controlled by $C\log t$, and the contribution to entropy from states far away from initial state is less than $1$. We will use an easy lemma: if at most $ m \ge 3$ states have total probability at most $p$, then the largest possible entropy from these states is $-p\log(p/ m )$, which corresponds to uniform distribution.
	
	Since $\max\{q_i\}\le 1$, at time $t$, the total number of jumping $J_t$ is controlled by a Poisson variable with parameter $t$, $Y_t$. This means that for any $l>0$, $\mathbb{P}(J_t>l)\le\mathbb{P}(Y_t>l)$.
	
	Fix $t$. For any $ m \ge 3t$, 
	$$\mathbb{P}(J_t\ge m )\le \sum_{k= m }^\infty \frac{t^k}{k!}e^{-t}\le e^{-t}\sum_{k= m }^\infty \frac{t^ m }{ m !}\frac{1}{2^{k- m }}=2e^{-t} \frac{t^ m }{ m !}.$$
	
	From Striling's formula on lower bound of factorial, $ m !\ge\sqrt{2\pi m } ( m /e)^ m $.
	
	Thus 
	$$\mathbb{P}(J_t\ge m )\le e^{-t}\Big(\frac{et}{ m }\Big)^ m \le \Big(\frac{e}{3}\Big)^ m .$$
	
	If the lifted chain at time $t$, $L_t$, is $ m $ steps away from the initial state, then it must jump no less than $ m $ steps. Hence 
	$$\mathbb{P}(\text{d}(L_t,o)= m )\le\mathbb{P}(J_t\ge m )\le\Big(\frac{e}{3}\Big)^ m .$$
	
	Here $\text{d}$ is the step counting distance, and $o$ is the initial state.
	
	For any lifting, the number of states satisfying $\text{d}(L_t,o)=m$ is less by $k^ m $, where $k$ is the number of states in $M$. Thus the entropy from these states is no larger than
	$$-\Big(\frac{e}{3}\Big)^ m \log \Big(\frac{e}{3k}\Big)^ m .$$
	
	The entropy from states with $\text{d}(L_t,o)\ge 3t$ is controlled by
	$$\sum_{ m =\lceil 3t\rceil}^\infty -\Big(\frac{e}{3}\Big)^ m \log \Big(\frac{e}{3k}\Big)^ m \le \log \frac{3k}{e}\Big[\frac{\lceil 3t\rceil}{1-\frac{e}{3}}\Big(\frac{e}{3}\Big)^{\lceil 3t\rceil}+\frac{1}{(1-\frac{e}{3})^2}\Big(\frac{e}{3}\Big)^{\lceil 3t\rceil+1}\Big]$$
	$$\le \log(2k)\Big[(400+20\lceil 3t\rceil)\Big(\frac{e}{3}\Big)^{\lceil 3t\rceil}\Big].$$
	
	When $t$ is large enough, the above term is less than $1$.
	
	There are at most $k(6t+1)^n$ states satisfying $\text{d}(L_t,o)\le 3t$, where $n$ is the first Betti number of $M$. The entropy from such states is at most $\log[k(6t+1)^n]\le C\log t$ for some large constant $C$. 
	\qed
\end{proof}

\begin{remark}
	The fact that $\text{H}[p(t)]/t\to 0$ is a corollary of conclusions in a paper by Kaimanovich and Woess \cite{Kai}. We still prove it here since it provides a concrete bound of the entropy, and an estimation of convergence rate. In the Kaimanovich-Woess paper \cite{Kai}, the lifted Markov chains are named ``quasi-homogeneous chains''. If $M$ is reduced to a single state with several cycles, then $L$ becomes a random walk on a group. For random walks on groups, whether the entropy grows linearly is a well-studied problem \cite{erschler2003drift,saloff2016random}. For a Markov chain with at least two cycles, its universal cover has $\lim \text{H}[p(t)]/t>0$ \cite{Kai}. Nevertheless, how to express this limit explicitly from $\{q_{ij}\}$ is still open.
\end{remark}

The following lemma is directly from Jensen's inequality and the fact that $-p\log p$ is convex up. It is used to extend the result of Lemma \ref{llo} to general initial distributions.
\begin{lemma}
	\label{l00}
	Consider a sequence of probabilities $p_i$ with sum $1$, and a sequence of discrete distributions $\mu_i$ on the same space. Then 
	$$\text{H}(\sum p_i \mu_i)\le -\sum p_i \log p_i +\sum p_ih(\mu_i).$$
\end{lemma}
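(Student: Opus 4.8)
The plan is to run the standard ``enlarge, then marginalise'' argument. Write the common index set of the $\mu_i$ as $\{x\}$, so $\mu_i=(\mu_i(x))_x$ with $\sum_x\mu_i(x)=1$, and set $\nu:=\sum_i p_i\mu_i$, i.e.\ $\nu_x=\sum_i p_i\mu_i(x)$; the target inequality reads $\H[\nu]\le\H[p]+\sum_i p_i\H[\mu_i]$, the quantity denoted $h(\mu_i)$ in the statement being $\H[\mu_i]$. First I would form the joint distribution $q$ on the product index set by $q_{ix}:=p_i\mu_i(x)$. Since $q_{ix}\ge 0$ and $\sum_{i,x}q_{ix}=\sum_i p_i\sum_x\mu_i(x)=1$, this is a genuine probability distribution, and its $x$-marginal is exactly $\nu$.

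Next I would compute $\H[q]$ on the nose. Expanding $\log q_{ix}=\log p_i+\log\mu_i(x)$ and using $\sum_x\mu_i(x)=1$ gives $\H[q]=-\sum_{i,x}p_i\mu_i(x)\log p_i-\sum_{i,x}p_i\mu_i(x)\log\mu_i(x)=\H[p]+\sum_i p_i\H[\mu_i]$. Every component of every distribution occurring here lies in $[0,1]$, so each summand $-t\log t$ is non-negative (with the convention $0\log 0=0$); hence all of $\H[\nu],\H[p],\H[\mu_i],\H[q]$ are well-defined elements of $[0,+\infty]$ and the interchange of summation order is justified by Tonelli. If $\sum_i p_i\H[\mu_i]=+\infty$ the claim is vacuous, so one may as well assume each term finite.

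It then remains to show $\H[\nu]\le\H[q]$ --- ``forgetting the label $i$ cannot increase entropy''. Fix $x$ with $\nu_x>0$ (an $x$ with $\nu_x=0$ contributes $0$ on both sides). For every $i$ one has $0\le q_{ix}\le\nu_x\le 1$, so $-\log\nu_x\le-\log q_{ix}$ and therefore $q_{ix}(-\log\nu_x)\le q_{ix}(-\log q_{ix})$, trivially so when $q_{ix}=0$; summing over $i$ gives $-\nu_x\log\nu_x\le\sum_i(-q_{ix}\log q_{ix})$, and summing over $x$ (Tonelli again) yields $\H[\nu]\le\H[q]$. Combined with the identity of the previous paragraph this is the lemma. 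Equivalently one may package this last step as the chain-rule identity $\H[q]=\H[\nu]+\sum_x\nu_x\,\H\!\big[q(\cdot\mid x)\big]$ with conditional law $q(i\mid x)=q_{ix}/\nu_x$, whose conditional-entropy term is a $\nu$-average of entropies of genuine probability distributions and hence $\ge 0$; this is the Jensen/concavity-of-$(-t\log t)$ mechanism alluded to in the remark, and it also exhibits equality iff the supports $\{x:\mu_i(x)>0\}$ for $i$ with $p_i>0$ are pairwise disjoint. I do not expect a genuine obstacle here: the one point deserving a word of care is that the underlying space (the lift $L$) is only countably infinite, but non-negativity of $-t\log t$ on $[0,1]$ makes all the sums and all the rearrangements of summation order legitimate without further hypotheses.
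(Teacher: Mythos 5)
Your proof is correct and complete. The paper itself gives no argument for this lemma; it only remarks beforehand that it ``is directly from Jensen's inequality and the fact that $-p\log p$ is convex up,'' so there is nothing to compare step by step. Your ``enlarge, then marginalise'' route --- set $q_{ix}=p_i\mu_i(x)$, verify the exact identity $\H[q]=\H[p]+\sum_i p_i\H[\mu_i]$, and then bound $\H[\nu]\le\H[q]$ via $-\log\nu_x\le-\log q_{ix}$ --- is the standard chain-rule/subadditivity proof, and your handling of the countable index set (non-negativity of $-t\log t$ on $[0,1]$, Tonelli, the vacuous case $\sum_i p_i\H[\mu_i]=+\infty$) is exactly the care the setting requires. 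One point worth making explicit, which your write-up already half-observes: concavity of $-t\log t$ plus Jensen literally yields the \emph{companion lower bound} $\H[\nu]\ge\sum_i p_i\H[\mu_i]$, whereas the upper bound asserted in the lemma rests on the monotonicity of $-\log$ (equivalently, non-negativity of the conditional entropy $\H[I\mid X]$ in the decomposition $\H[q]=\H[\nu]+\sum_x\nu_x\H[q(\cdot\mid x)]$). So the paper's one-line attribution is imprecise, and your argument supplies the mechanism that actually proves the stated direction; for the paper's application (initial entropy $h_0$ finite, each $\H[\mu_i]\le C\log t$ by Lemma~\ref{llo}, hence $\H[p(t)]\le h_0+C\log t$) your version is exactly what is needed.
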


If the initial distribution $p(0)$ has entropy $h_0$, then the entropy at time $t$ is less than $h_0+C\log t$. Hence $\text{H}[p(t)]/t\to 0$.

This finishes the proof of Theorem \ref{t2}. In fact, this result is true as long as the lifted chain grows sub-exponentially.

\subsection{Entropy production as energy dissipation}
Consider the free energy with detailed balanced stationary measure $\mu=\exp(-\varphi)$
$$F^\mu(t)=\sum_{i_\alpha}p_{i_\alpha}(t)\log\frac{p_{i_\alpha}(t)}{\mu_{i_\alpha}}=\sum_{i_\alpha}p_{i_\alpha}(t)\log p_{i_\alpha}(t)+\sum_{i_\alpha}p_{i_\alpha}(t)\varphi_{i_\alpha}.$$

Here $\text{H}(t)=-\sum_{i_\alpha}p_{i_\alpha}(t)\log p_{i_\alpha}(t)$ is the entropy, and $E(t)=\sum_{i_\alpha}p_{i_\alpha}(t)\varphi_{i_\alpha}$ is the mean potential energy. Thus $F^\mu(t)=E(t)-\text{H}(t)$.

For $E(t)$, we have the following result:
\begin{proposition}
	The time derivative of $E(t)$ converges to the negative stationary entropy production rate,
	$$\frac{\mathrm{d}E(t)}{\mathrm{d}t}\to -\bar{e_p}.$$
\end{proposition}
\begin{proof}
	$$\frac{\mathrm{d}E(t)}{\mathrm{d}t}+\bar{e_p}(t)=\frac{1}{2}\sum_{j_\beta}\sum_{i_\alpha\sim j_\beta}[p_{i_\alpha}(t)q_{ij}-p_{j_\beta}(t)q_{ji}]\log\frac{\mu_{i_\alpha}\bar{p}_i(t)q_{ij}}{\mu_{j_\beta}\bar{p}_j(t)q_{ji}}$$
	$$\to \frac{1}{2}\sum_i\sum_j(\bar{\pi}_iq_{ij}-\bar{\pi}_jq_{ji})\log\frac{\bar{\pi}_i}{\bar{\pi}_j}.$$
	The last term is the time derivative of $\sum_i \bar{p}_i(t)\log\bar{p}_i(t)$ when $\bar{p}_i(t)=\bar{\pi}_i$, which is $0$.
	\qed
\end{proof}

In the decomposition of free energy $F^\mu(t)=E(t)-\text{H}(t)$, The first term is asymptotically linear with $t$, and the second term is sub-linear with $t$ (controlled by $C\log t$).

The entropy production of the finite Markov chain, which cannot be described by system status quantities directly, is reflected by the free energy/potential energy dissipation of the lifted Markov chain.

\subsection{Mutual information}
For two random variables$X,Y$, we can define their mutual information as 
$$\text{MI}(X,Y)=\text{H}(X)+\text{H}(Y)-\text{H}(\{X,Y\}).$$
Mutual information is commonly used in information theory, and describes the information shared by $X$ and $Y$. It can be also interpreted as the prediction power of $X$ on $Y$ (and $Y$ on $X$ vice versa). \text{MI}(X,Y) is nonnegative, and equals $0$ if and only if $X$ and $Y$ are independent.

If $L_t$ starts from a preimage of $i$, then we denote its entropy at time $t$ as $\text{H}_t^i$, and the mutual information between $L_1$ and $L_t$ by $\text{MI}^i(L_1,L_t)$. $\text{MI}^i(L_1,L_t)$ measures the impact of $L_1$ on far future. We have the following result:
\begin{proposition}
	\label{P5}
	For the lifted Markov chain $L_t$, $\lim_{t\to \infty}\text{MI}^i(L_1,L_t)=0$.
\end{proposition}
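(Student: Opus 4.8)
The plan is to reduce the vanishing of mutual information to the entropy bound already established in Lemma \ref{llo}. The starting observation is that, by the definition of mutual information,
\[
\text{MI}^i(L_1,L_t) = \H(L_1) + \H(L_t) - \H(\{L_1,L_t\}),
\]
and this can be rewritten as a conditional-entropy deficit: $\text{MI}^i(L_1,L_t) = \H(L_t) - \H(L_t \mid L_1)$. Since $L_1$ takes only finitely many values (at time $1$ the chain has jumped a bounded-in-distribution number of steps, but in any case the state space restricted to positive probability at time $1$ could be infinite — so first I would note $\H(L_1)$ is finite, e.g.\ because $L_1$ starting from a single state has entropy at most $C\log 1 + O(1) = O(1)$ by Lemma \ref{llo} applied at $t=1$, or directly by the Poisson tail bound in its proof). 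Write $p_1(j_\gamma)$ for the distribution of $L_1$. Then
\[
\text{MI}^i(L_1,L_t) = \sum_{j_\gamma} p_1(j_\gamma)\,\text{D}_{\text{KL}}\!\big(p^{j_\gamma}_{t-1}\,\big\|\, p_t\big),
\]
where $p^{j_\gamma}_{t-1}$ is the law at time $t-1$ of the chain started from $j_\gamma$, and $p_t$ is the (mixed) law at time $t$ started from the preimage of $i$. This is just the standard identity $\text{MI}(X,Y)=\mathbb{E}_X[\text{D}_{\text{KL}}(P_{Y|X}\|P_Y)]$ together with the Markov property.

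Next I would control each relative entropy. By Lemma \ref{voigt}, $t\mapsto \text{D}_{\text{KL}}(p^{j_\gamma}_{t-1}\|p_t)$ — thought of as relative entropy between two solutions of the same master equation — is nonincreasing in $t$; indeed both $p^{j_\gamma}_{\cdot}$ and $p_\cdot$ evolve under the $L$-dynamics, and the monotonicity proof of Lemma \ref{voigt} applies verbatim to the relative entropy of one sub-probability solution with respect to another (the calculation there only used that $\theta$ is a stationary \emph{measure}, but the same computation with $\theta$ replaced by a time-dependent solution $p_t$ still yields $\frac{d}{dt}\text{D}_{\text{KL}}\le 0$, as is classical). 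Hence it suffices to show the $t=1$ value, or any fixed-time value, is finite and that the limit is $0$. Finiteness at a fixed time follows because $\text{D}_{\text{KL}}(p^{j_\gamma}_{t-1}\|p_t)\le \H(L_t)-\H(L_t\mid L_1)\le \H(L_t) = \H[p(t)]\le C\log t$ by Lemma \ref{llo} (extended to the mixed initial law via Lemma \ref{l00}), together with $\H(L_t\mid L_1)\ge 0$ — wait, that only bounds the \emph{sum}, which is what we want anyway. So in fact the cleanest route is: $0\le \text{MI}^i(L_1,L_t)=\H(L_t)-\H(L_t\mid L_1)$, and $\H(L_t\mid L_1)=\sum_{j_\gamma}p_1(j_\gamma)\H[p^{j_\gamma}_{t-1}]$ is the entropy of the chain run for time $t-1$ from various starting points, each bounded below by its value and — crucially — each differing from $\H[p(t)]$ by a vanishing amount relative to the leading $C\log t$.

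The main obstacle is showing that $\H(L_t) - \H(L_t\mid L_1)\to 0$ rather than merely staying bounded: both terms individually grow like $\log t$ (by the Remark, the true growth is linear for $\ge 2$ cycles — actually both grow linearly), so one needs genuine \emph{cancellation}, not just a size bound. The right tool is the monotonicity of mutual information along the Markov semigroup: the data-processing inequality gives that $t\mapsto \text{MI}^i(L_1, L_t)$ is nonincreasing for $t\ge 1$ (since $L_t$ is a stochastic function of $L_s$ for $s\le t$ — this is the data-processing / Markov monotonicity of mutual information, equivalent to the convexity identity above combined with Lemma \ref{voigt}). Being nonincreasing and nonnegative, it has a limit $\ell\ge 0$; to identify $\ell=0$ I would argue by coupling: start two copies of $L$ from a fixed preimage of $i$, one with an extra jump-history drawn from $p_1$; since $L$ restricted to each $\mathbb{Z}^n$-fiber is an irreducible finite chain modulo the unbounded drift, and the drift component is a sum of increments with a law that does not depend on which preimage of $i$ one started from after a short time, the two copies can be coupled so that $\P(L_t\ne L_t')\to 0$, whence $\text{MI}^i(L_1,L_t)\le \H(\text{coupling indicator}) + \P(\text{mismatch})\cdot\H[p(t)] \to 0$ provided the mismatch probability decays faster than $1/\log t$ — and exponential mixing within fibers gives exponential decay, comfortably beating $\log t$. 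The delicate point, and where I would spend the most care, is justifying the fiberwise coupling uniformly in $\alpha\in\mathbb{Z}^n$: one must check that conditioning on $L_1$ affects only a bounded-in-probability shift of the $\mathbb{Z}^n$-coordinate and a finite amount of ``internal'' state information, both of which are forgotten exponentially fast by the $M$-dynamics driving the fiber.
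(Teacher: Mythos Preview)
Your identification of the two main ingredients---monotonicity of $\text{MI}^i(L_1,L_t)$ via data processing, and the bound $\H[p(t)]\le C\log t$ from Lemma~\ref{llo}---is correct, and both appear in the paper. The gap is in your coupling step. You assert that conditioning on $L_1$ produces a bounded shift of the $\mathbb{Z}^n$-coordinate which is ``forgotten exponentially fast by the $M$-dynamics driving the fiber.'' This is false: the $L$-dynamics is $\mathbb{Z}^n$-translation-invariant, so a shift in the $\mathbb{Z}^n$-coordinate is \emph{never} forgotten by any coupling that synchronizes increments. Once the base-$M$ components merge (which does happen fast), the $\mathbb{Z}^n$-increments can be made identical, but the accumulated offset is then frozen. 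Hence $\P(L_t\neq L_t')$ does not decay exponentially; for $n\ge 3$ it need not decay at all under the coupling you describe, and even an optimal coupling only gives $O(t^{-1/2})$ via local-CLT-type estimates you have not established. Your Fano-type bound $\text{MI}\le h(\epsilon)+\epsilon\cdot \H[p(t)]$ would succeed if $\epsilon=o(1/\log t)$, but the argument you give does not deliver any decay rate.

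The paper avoids coupling entirely. Within Theorem~\ref{Tn} it proves (Lemma~\ref{l1}) the identity
\[
\sum_{j}\bar\pi_j\,\text{MI}^j(L_1,L_t)=\sum_{j}\bar\pi_j\big(\H_t^j-\H_{t-1}^j\big),
\]
obtained from $\text{MI}^j(L_1,L_t)=\H_t^j-\sum_i\P(\bar X_1=i\mid \bar X_0=j)\,\H_{t-1}^i$ by averaging against $\bar\pi$. Since each $\text{MI}^j$ is nonincreasing and nonnegative, the $\bar\pi$-averaged limit $\ell$ exists; the identity then forces $\sum_j\bar\pi_j\H_t^j/t\to\ell$. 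Lemma~\ref{llo} gives $\ell=0$, and nonnegativity of each summand yields $\lim_t\text{MI}^i(L_1,L_t)=0$ for every $i$. The idea you are missing is that the (averaged) mutual information equals the \emph{entropy increment}, so sublinear entropy growth directly gives vanishing mutual information---no coupling and no CLT required.
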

This means that $L_t$ is asymptotically independent with any past. Also, $L_1$ has no information about the behavior at infinity.

Proposition \ref{P5} follows directly from Lemma \ref{llo} and Theorem \ref{Tn} below.

Theorem \ref{Tn} is valid not only for $L$, but also for any transitive lifting of $M$, denoted as $X_t$. Here transitive means that for any state $i$ in $M$, and any two preimages $i_1,i_2$ of $i$ in the lifted chain, there is an automorphism of the lifted chain, such that $i_1$ is mapped to $i_2$. Transitivity guarantees that $\text{H}_t^{i_1}=\text{H}_t^{i_2}$ and $\text{MI}^{i_1}(X_1,X_t)=\text{MI}^{i_2}(X_1,X_t)$.

Similar (and much simpler) results are also valid for random walks on groups \cite{kaimanovich1983random}.
\begin{theorem}
	Consider any transitive lifting of $M$. For any $i,j$, $\lim_{t\to \infty}\text{H}_t^i/t$ and $\lim_{t\to \infty}\text{MI}^j(X_1,X_t)$ exist. Also, 
	$$\lim_{t\to \infty}\text{H}_t^i/t>0 \Longleftrightarrow \lim_{t\to \infty}\text{MI}^j(X_1,X_t)>0.$$
	\label{Tn}
\end{theorem}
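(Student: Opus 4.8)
### Proof Proposal for Theorem \ref{Tn}

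The plan is to reduce both claims to a subadditivity argument for the entropy sequence, and then to relate mutual information to an entropy increment. First I would set up the key observation: because $X_t$ is a transitive lifting of $M$, and $M$ is a finite irreducible chain, the chain $X_t$ started from a preimage of $i$ enjoys an approximate cocycle/Markov property. Concretely, conditioning on the state $X_s$ and using transitivity to identify the fiber over the $M$-state of $X_s$ with a canonical copy, the conditional law of $X_{s+t}$ given $X_s=k_\gamma$ depends only on the $M$-projection $k$ of $X_s$, up to the deterministic group-translation that carries $k_\gamma$ to the reference preimage. From this I would derive the near-subadditivity
$$
\text{H}_{s+t}^i \;\le\; \text{H}_s^i + \max_k \text{H}_t^k \;+\; O(1),
$$
where the $O(1)$ absorbs the finitely many choices of $M$-state and the finite entropy $\log|V|$; symmetrically one gets a superadditive lower bound $\text{H}_{s+t}^i \ge \text{H}_t^{\,k}$ for the appropriate $k$ (entropy cannot decrease under pushing forward information about $X_s$ through the Markov kernel... more carefully, using that $X_s \to X_{s+t}$ factors through a fixed stochastic kernel I would bound $\text{H}_{s+t}^i \ge \text{H}_{t}^{\,\min}$ minus $O(1)$). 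Since all the $\text{H}_t^k$ for the finitely many $k\in V$ differ by at most $\log|V| + O(1)$ (one can get from any preimage of $k$ to some preimage of any other $j$ in a bounded number of steps, and a bounded number of steps changes entropy by a bounded amount), it suffices to treat a single representative. By Fekete's lemma applied to the (essentially) subadditive sequence $\text{H}_n^i + O(1)$, the limit $\lim_{t\to\infty}\text{H}_t^i/t$ exists; a standard interpolation between integer and real times (again using that $|\text{H}_{t}^i - \text{H}_{\lfloor t\rfloor}^i|$ is bounded, since in one unit of time the chain moves a bounded-in-expectation number of steps, cf. the Poisson tail estimate in Lemma \ref{llo}) upgrades this to the continuous-time limit. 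This disposes of the existence of $\lim \text{H}_t^i/t$.

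Next I would handle the mutual information. Write $\text{MI}^j(X_1,X_t) = \text{H}_t^j - \text{H}(X_t \mid X_1)$. Decomposing over the value of $X_1$ and using transitivity to rename each fiber,
$$
\text{H}(X_t \mid X_1) \;=\; \sum_{k_\gamma} \P(X_1 = k_\gamma)\,\text{H}_{t-1}^{k} \;+\; O(1),
$$
because given $X_1 = k_\gamma$ the remaining evolution over $[1,t]$ is a translate of the chain started from the reference preimage of $k$, whose entropy is $\text{H}_{t-1}^{k}$. Since the $\text{H}_{t-1}^k$ all agree with $\text{H}_{t-1}^j$ up to $O(1)$, we get
$$
\text{MI}^j(X_1,X_t) \;=\; \text{H}_t^j - \text{H}_{t-1}^j + O(1)\big\langle\text{bounded}\big\rangle,
$$
so $\text{MI}^j(X_1,X_t)$ is, up to an $O(1)$ additive error, the one-step entropy increment. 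To show the limit exists I would use monotonicity: by the data-processing inequality along the Markov chain $X_1 \to X_{t} \to X_{t+1}$ (or $X_1 \to X_{t-1} \to X_t$, shifting appropriately), $\text{MI}^j(X_1,X_t)$ is non-increasing in $t$, hence converges. Finally, for the equivalence: if $\lim \text{H}_t^i/t = \ell > 0$, then since $\text{H}_t^j$ grows linearly but $\text{MI}^j(X_1,X_t)$ is non-increasing and non-negative, telescoping $\sum_{t=1}^{N}\big(\text{H}_t^j - \text{H}_{t-1}^j\big) = \text{H}_N^j - \text{H}_0^j \sim \ell N$ forces the increments to have positive limiting average, and since they are eventually monotone (or at least the $\text{MI}$ is monotone and equals the increment up to $O(1)$), the limit $\lim \text{MI}^j(X_1,X_t)$ must be $\ge \ell > 0$. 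Conversely, if $\lim \text{H}_t^i/t = 0$, then $\text{H}_N^j - \text{H}_0^j = o(N)$, so the non-increasing nonnegative sequence of increments has average tending to $0$, whence $\lim \text{MI}^j(X_1,X_t) = 0$. (Proposition \ref{P5} then follows because for $L$ itself Lemma \ref{llo} gives $\text{H}_t^i \le C\log t$, so $\lim \text{H}_t^i/t = 0$.)

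The main obstacle I anticipate is making the ``transitive lifting $\Rightarrow$ approximate cocycle'' step fully rigorous in continuous time with the $O(1)$ error terms honestly controlled. In particular: (i) verifying that the conditional law of $X_{t}$ given $X_s$ really does depend on $X_s$ only through its $M$-projection up to a deterministic automorphism — this is exactly where transitivity is used, but one must check the automorphism group acts on trajectories compatibly with the continuous-time dynamics, not merely on the graph; and (ii) controlling the additive errors when comparing $\text{H}_t^k$ for different base points $k$ and when interpolating integer to real time — here the Poisson-tail / bounded-expected-displacement estimates already appearing in Lemma \ref{llo} should suffice, but one has to confirm that ``a bounded number of extra jumps changes the entropy by a bounded amount,'' which uses that each jump multiplies the support size by at most $\deg_{\max}$ and invokes Lemma \ref{l00}. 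Once these bookkeeping points are pinned down, Fekete's lemma and the data-processing monotonicity of mutual information carry the rest essentially for free.
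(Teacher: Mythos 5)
Your overall architecture---existence of $\lim_{t}\text{MI}^j(X_1,X_t)$ from data processing, an identity expressing $\text{MI}^j(X_1,X_t)$ as an entropy increment, and a subadditivity/telescoping argument for $\lim_t \text{H}_t^i/t$---matches the paper's in outline, but two of your load-bearing steps have genuine gaps. First, the claim that $|\text{H}_t^k-\text{H}_t^j|=O(1)$ uniformly in $t$ for different base states is not established by ``a bounded number of steps changes entropy by a bounded amount.'' Conditioning on $X_s$ gives the exact bounds $\sum_k \P(\bar X_s=k\mid\bar X_0=j)\,\text{H}_{t-s}^k\le \text{H}_t^j\le \text{H}_s^j+\sum_k \P(\bar X_s=k\mid \bar X_0=j)\,\text{H}_{t-s}^k$; these compare $\text{H}_t^j$ to a \emph{weighted average} of the $\text{H}_{t-s}^k$, not to any single one of them up to an additive constant, and iterating them only controls the oscillation $\max_j\text{H}_t^j-\min_j\text{H}_t^j$ by $o(t)$ (via mixing), not by $O(1)$. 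Indeed the paper's Lemma \ref{l2} proves only that the linear growth rates agree, and the error incurred there is of order $\epsilon t$. Second---and this is the fatal point---even granting the $O(1)$ comparison, writing $\text{MI}^j(X_1,X_t)=\text{H}_t^j-\text{H}_{t-1}^j+O(1)$ cannot transfer positivity of the limit in either direction: the Ces\`{a}ro average of the increments equals $\ell=\lim_t\text{H}_t^j/t$, but the $O(1)$ error need not average to zero, so you only obtain $|\lim_t\text{MI}^j(X_1,X_t)-\ell|\le C$. In particular your argument shows neither $\ell=0\Rightarrow\lim_t\text{MI}^j=0$ nor $\ell>0\Rightarrow\lim_t\text{MI}^j>0$.

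The paper closes both gaps differently. The exact identity $\text{MI}^j(X_1,X_t)=\text{H}_t^j-\sum_i\P(\bar X_1=i\mid\bar X_0=j)\,\text{H}_{t-1}^i$ telescopes only after averaging against the stationary distribution $\bar\pi$ (Lemma \ref{l1}), which yields $\lim_t\sum_i\bar\pi_i\text{H}_t^i/t=\lim_t\sum_i\bar\pi_i\text{MI}^i(X_1,X_t)$ with no error term; Lemma \ref{l2} then upgrades the averaged entropy rate to each individual base state by a mixing-plus-Jensen squeeze. This already gives the direction $\lim_t\text{H}_t^i/t=0\Rightarrow\lim_t\text{MI}^j=0$, since each $\text{MI}^j\ge 0$ and $\bar\pi_j>0$. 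For the converse one still only learns that \emph{some} $\text{MI}^a$ has positive limit, and the substantial remaining step---absent from your proposal---is Lemma \ref{l3} (with Lemmas \ref{l4}--\ref{l6}): a truncation-and-comparison argument showing that positivity of $\lim_t\text{MI}^a(X_1,X_t)$ for one base state forces positivity for all base states. To repair your proof you would need either to establish the uniform $O(1)$ entropy comparison with an error that actually vanishes in Ces\`{a}ro mean (which the mixing structure does not obviously provide), or to supply an analogue of Lemma \ref{l3}.
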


\begin{proof}
	
	In the following, the summation without superscript $\sum_i$ takes all vertices in the lifted Markov chain. 
	
	Due to the Markov property, for $1<t<s$, $X_1$ and $X_s$ are independent conditioned on $X_t$. From data processing inequality \cite{DPI}, $\text{MI}^j(X_1,X_t)$ is decreasing with $t$ when $t>1$. Since $\text{MI}^j(X_1,X_t)\ge 0$, $\lim_{t\to \infty}\text{MI}^j(X_1,X_t)$ exists.
	
	The existence of $\lim_{t\to \infty}\text{H}_t^i/t$ and the equivalence of $\lim_{t\to \infty}\text{H}_t^i/t>0$ and $\lim_{t\to \infty}\text{MI}^j(X_1,X_t)>0$ follow from the following lemmas.
	
	\begin{lemma}
		The averaged entropy and mutual information have the following relationship: $$\lim_{t\to \infty}\sum_{i=1}^k \bar{\pi}_i\text{H}_t^i/t=\lim_{t\to \infty}\sum_{i=1}^k \bar{\pi}_i\text{MI}^i(X_1,X_t).$$
		\label{l1}
	\end{lemma}
	
	\begin{lemma}
		If $\lim_{t\to \infty}\sum_{i=1}^k \bar{\pi}_i\text{H}_t^i/t=C_1$, then for each $i$, $\lim_{t\to \infty}\text{H}_t^i/t$ exists, and equals $C_1$.
		\label{l2}
	\end{lemma}
	
	\begin{lemma}
		If $\lim_{t\to \infty}\text{MI}^a(X_1,X_t)>0$ for one $a$, then $\lim_{t\to \infty}\text{MI}^b(X_1,X_t)>0$ for any $b$.  
		\label{l3}
	\end{lemma}
	
	From Lemma \ref{l1} and Lemma \ref{l2}, if $\lim_{t\to \infty}\text{MI}^j(X_1,X_t)>0$, then $\lim_{t\to \infty}\sum_{i=1}^k \bar{\pi}_i\text{H}_t^i/t>0$, thus $\lim_{t\to \infty}\text{H}_t^i/t>0$. 
	
	If $\lim_{t\to \infty}\text{H}_t^i/t>0$, then $\lim_{t\to \infty}\sum_{i=1}^k \bar{\pi}_i\text{MI}^i(X_1,X_t)>0$. From Lemma \ref{l3} we have 
	$$\lim_{t\to \infty}\text{MI}^j(X_1,X_t)>0.$$
	\qed
\end{proof}

\begin{proof}[Lemma \ref{l1}]
	We have
	$$\text{MI}^j(X_1,X_t)$$
	$$=\sum_i\sum_l\mathbb{P}(X_1=i,X_t=l\mid X_0=j)\log\frac{\mathbb{P}(X_1=i,X_t=l\mid X_0=j)}{\mathbb{P}(X_1=i\mid X_0=j)\mathbb{P}(X_t=l\mid X_0=j)}$$
	$$=-\sum_l\mathbb{P}(X_t=l\mid X_0=j)\log\mathbb{P}(X_t=l\mid X_0=j)$$
	$$+\sum_i\sum_l\mathbb{P}(X_1=i\mid X_0=j)\mathbb{P}(X_t=l\mid X_0=j, X_1=i)\log\mathbb{P}(X_t=l\mid X_0=j, X_1=i)$$
	$$=\text{H}^j_t+\sum_i\mathbb{P}(X_1=i\mid X_0=j)\sum_l\mathbb{P}(X_t=l\mid X_1=i)\log\mathbb{P}(X_t=l\mid  X_1=i)$$
	$$=\text{H}^j_t-\sum_i\mathbb{P}(X_1=i\mid X_0=j)\text{H}_{t-1}^i=\text{H}^j_t-\sum_{i=1}^k\mathbb{P}(\bar{X}_1=i\mid \bar{X}_0=j)\text{H}_{t-1}^i.$$
	Notice that for Markov chain $\bar{X}_t$, we have $\sum_{j=1}^{k}\bar{\pi}_j\mathbb{P}(X_1=i\mid X_0=j)=\bar{\pi}_i$. Thus
	$$\sum_{j=1}^{k}\bar{\pi}_j\text{MI}^j(X_1,X_t)=\sum_{j=1}^{k}\bar{\pi}_j\text{H}^j_t-\sum_{j=1}^{k}\bar{\pi}_j\sum_{i=1}^k\mathbb{P}(\bar{X}_1=i\mid \bar{X}_0=j)\text{H}_{t-1}^i$$
	$$=\sum_{j=1}^{k}\bar{\pi}_j\text{H}^j_t-\sum_{i=1}^{k}\bar{\pi}_i\text{H}^i_{t-1}=\sum_{j=1}^{k}\bar{\pi}_j(\text{H}^j_t-\text{H}^j_{t-1}).$$
	
	The limit $\lim_{t\to \infty}\sum_{i=1}^k \bar{\pi}_i\text{MI}^i(X_1,X_t)$ exists, then
	$$\lim_{t\to \infty}\sum_{j=1}^{k}\bar{\pi}_j(\text{H}^j_t-\text{H}^j_{t-1})=\lim_{t\to \infty}\sum_{i=1}^k \bar{\pi}_i\text{MI}^i(X_1,X_t).$$
	Therefore
	$$\lim_{t\to \infty}\sum_{i=1}^k \bar{\pi}_i\text{H}_t^i/t=\lim_{t\to \infty}\sum_{i=1}^k \bar{\pi}_i\text{MI}^i(X_1,X_t).$$
	\qed
\end{proof}

\begin{proof}[Lemma \ref{l2}]
	Since $\lim_{t\to \infty}\sum_{i=1}^k \bar{\pi}_i\text{H}_t^i/t$ exists, and $\text{H}_t^i$ will not blow up in finite time, $\sum_{i=1}^k \bar{\pi}_i\text{H}_t^i/t$ is bounded. Since $\text{H}_t^i/t\ge 0$, we have $\text{H}_t^i/t<C_2$ with a constant $C_2$ for any $i$. 
	
	Fix any $\epsilon>0$. The original Markov chain $\bar{X}_t$ is irreducible, thus starting from state $i$, $\mu_t^i$, the distribution at time $t$, will converge to $\bar{\pi}$. Choose $\Delta t$ large enough such that the total variation distance $\text{d}_{TV}(\mu_{\Delta t}^i,\bar{\pi})<\epsilon/(2C_2)$.
	
	For a group of probability distributions $\mu_0,\mu_1,\ldots,\mu_k$ and probabilities $p_1,\ldots,p_k$ with $\sum_{i=1}^{k}=1$ and $\mu_0=\sum_{i=1}^k p_i\mu_i$, we have 
	$$\text{H}(\mu_0)\le \sum_{i=1}^{k}p_i\text{H}(\mu_i)-\sum_{i=1}^{k}p_i\log p_i.$$
	
	$$\text{H}_t^i\le \sum_{j=1}^k \mathbb{P}(\bar{X}_{\Delta t}=j\mid \bar{X}_0=i)\text{H}_{t-\Delta t}^j$$
	$$-\sum_{j=1}^k \mathbb{P}(\bar{X}_{\Delta t}=j\mid \bar{X}_0=i)\log \mathbb{P}(\bar{X}_{\Delta t}=j\mid \bar{X}_0=i)$$
	$$\le \sum_{j=1}^k \bar{\pi}_j \text{H}_{t-\Delta t}^j+(t-\Delta t)\epsilon/2+\log k.$$
	
	Set $T$ large enough such that $\sum_{j=1}^k \bar{\pi}_j \text{H}_{T-\Delta t}^j/(T-\Delta t)<C_1+\epsilon/4$, and $(\log k)/T<\epsilon/4$. Now
	$\text{H}_T^i/T<C_1+\epsilon$.
	
	Choose $T'$ large enough such that $\text{H}_{T'}^j/T'<C_1+\epsilon \bar{\pi}_i/2$ for any $j$, and $\sum_{j=1}^k \bar{\pi}_j \text{H}_{T'}^j/T'>C_1-\epsilon\bar{\pi}_i/2$. Then $\text{H}_{T'}^i/T'>C_1-\epsilon$.
	
	Thus $\lim_{t\to\infty}\text{H}_t^i/t$ exists, and equals $C_1$.
	\qed
\end{proof}

\begin{proof}[Lemma \ref{l3}]
	Assume $\lim_{t\to \infty}\text{MI}^a(X_1,X_t)=C_3$.
	
	Since $\text{MI}^a(X_1,X_t)$ is decreasing with $t>1$, $\text{MI}^a(X_1,X_t)\ge C_3$.

	Assume that starting from $a$, $X_1$ is at state $s_j$ with probability $p_j$. Reorder the states such that $\{p_j\}$ is decreasing. Since $\text{H}_1^i$ is finite, we could choose $n$ large enough, such that 
	$$-\sum_{l>n}p_l\log p_l<C_3/3\;\;\text{and}\;\;-\Big(\sum_{l\le n}p_l\Big)\log \Big(\sum_{l\le n}p_l\Big)<C_3/3.$$
	
	Set $\mathbb{P}(X_t=s_j\mid X_1=s_i)=y_{ij}$. For any $t>1$, construct $\ddot{X}_1$ and $\ddot{X}_t$ with joint distribution $\mathbb{P}(\ddot{X}_1=s_i,\ddot{X}_t=s_j)=y_{ij}p_i/\sum_{l\le n}p_l$ for $i\le n$ and any $j$.
	\begin{lemma}
		For any $t>1$, $\MI(\ddot{X}_1,\ddot{X}_t)\ge C_3/3.$
		\label{l4}
	\end{lemma}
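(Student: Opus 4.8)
\textbf{Proof proposal for Lemma \ref{l4}.}

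The plan is to show that the modified pair $(\ddot{X}_1,\ddot{X}_t)$ retains at least a third of the original mutual information, by tracking how each of the three ``defects'' introduced by the conditioning and truncation can cost at most $C_3/3$ in mutual information. First I would write the mutual information $\MI(X_1,X_t)$ (starting from $a$) as $\H[X_1] - \H[X_1\mid X_t]$, or symmetrically, and observe that restricting the first coordinate to the event $\{X_1 \in \{s_1,\dots,s_n\}\}$ and renormalizing the probabilities $p_i$ to $p_i/\sum_{l\le n}p_l$ is exactly the construction of $\ddot X_1$; the conditional laws $y_{ij}=\P(X_t=s_j\mid X_1=s_i)$ are unchanged for $i\le n$. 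So the task is to compare $\MI(\ddot X_1,\ddot X_t)$ with $\MI(X_1,X_t)\ge C_3$.

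The key step is a decomposition of $\MI(X_1,X_t)$ that isolates the contribution of the ``head'' indices $i\le n$ from the ``tail'' $i>n$. Introduce the indicator $B=\mathbf{1}\{X_1\in\{s_1,\dots,s_n\}\}$. By the chain rule for mutual information, $\MI(X_1,X_t) = \MI(B,X_t) + \MI(X_1,X_t\mid B)$, and $\MI(X_1,X_t\mid B) = \P(B=1)\,\MI(X_1,X_t\mid B=1) + \P(B=0)\,\MI(X_1,X_t\mid B=0)$. Now $\MI(X_1,X_t\mid B=1)$ is precisely $\MI(\ddot X_1,\ddot X_t)$ (up to the renormalization that defines $\ddot X_1$), while $\MI(B,X_t)\le \H[B] = -\big(\sum_{l\le n}p_l\big)\log\big(\sum_{l\le n}p_l\big) - \big(\sum_{l>n}p_l\big)\log\big(\sum_{l>n}p_l\big)$, which is bounded by the two quantities made $<C_3/3$ by the choice of $n$ — here I would use that $-(1-x)\log(1-x)\le -x\log x$ type bounds, or simply note both summands of $\H[B]$ are controlled by the two displayed inequalities preceding the lemma. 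Finally, $\P(B=0)\,\MI(X_1,X_t\mid B=0)\le \MI(X_1,X_t\mid B=0)$, and this conditional-on-the-tail term I would bound by its own entropy $\H[X_1\mid B=0]$, which, after renormalizing, is $-\sum_{l>n}(p_l/\sum_{l>n}p_l)\log(p_l/\sum_{l>n}p_l)$; combined with the factor $\P(B=0)=\sum_{l>n}p_l$ this gives exactly $-\sum_{l>n}p_l\log p_l + (\sum_{l>n}p_l)\log(\sum_{l>n}p_l)\cdot(-1)$, again dominated by the quantity $<C_3/3$. Assembling: $C_3 \le \MI(X_1,X_t) \le C_3/3 + \P(B=1)\,\MI(\ddot X_1,\ddot X_t) + C_3/3$, so $\MI(\ddot X_1,\ddot X_t)\ge \P(B=1)^{-1}\big(C_3/3\big)\ge C_3/3$ since $\P(B=1)\le 1$.

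The main obstacle I anticipate is bookkeeping the precise constants: the lemma hypotheses are stated with two separate ``$<C_3/3$'' bounds rather than one, so I must be careful that the entropy of the head/tail split $\H[B]$ and the residual tail mutual-information term are each separately dominated by one of the two given inequalities, rather than double-counting. A secondary subtlety is making sure the conditional mutual information $\MI(X_1,X_t\mid B=1)$ genuinely coincides with $\MI(\ddot X_1,\ddot X_t)$ as defined by the joint law $y_{ij}p_i/\sum_{l\le n}p_l$ — this requires checking that conditioning $X_t$ on $\{X_1=s_i\}$ with $i\le n$ is unaffected by the event $B=1$, which is immediate since $\{X_1=s_i\}\subseteq\{B=1\}$. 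Once these identifications are pinned down, the rest is the chain rule plus the elementary bound $\MI\le \H$ applied to the appropriate marginal.
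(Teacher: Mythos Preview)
Your chain-rule approach with the indicator $B=\mathbf{1}\{X_1\in\{s_1,\dots,s_n\}\}$ is correct and more streamlined than the paper's. The paper instead builds an auxiliary variable $X^*_t$ (equal to $X_t$ when $B=1$, and equal to a fresh symbol $s^*_m$ recording $X_1=s_m$ when $B=0$), proves by direct computation the exact identity
\[
\MI(X_1,X^*_t)=\Big(\sum_{i\le n}p_i\Big)\MI(\ddot X_1,\ddot X_t)-\Big(\sum_{i\le n}p_i\Big)\log\Big(\sum_{i\le n}p_i\Big)-\sum_{i>n}p_i\log p_i,
\]
and then spends another half page verifying $\MI(X_1,X_t)\le\MI(X_1,X^*_t)$. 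Your decomposition $\MI(X_1,X_t)=\MI(B,X_t)+\P(B=1)\MI(\ddot X_1,\ddot X_t)+\P(B=0)\MI(X_1,X_t\mid B=0)$ reaches the same place in one line; the paper's $X^*_t$ is really just a device that makes $B$ (and, on the tail, all of $X_1$) readable from the second coordinate, which is why the two routes yield the identical expression.

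One genuine bookkeeping gap: as written, your final assembly bounds $\MI(B,X_t)$ and $\P(B=0)\MI(X_1,X_t\mid B=0)$ \emph{each} by $C_3/3$, but $\H[B]$ alone already has two summands each only known to be $<C_3/3$. Also, your formula for $\P(B=0)\H[X_1\mid B=0]$ has a sign slip: with $q=\sum_{l>n}p_l$ it equals $-\sum_{l>n}p_l\log p_l+q\log q$, not $-q\log q$. The fix is to bound the \emph{sum}:
\[
\H[B]+\P(B=0)\H[X_1\mid B=0]=\bigl[-(1-q)\log(1-q)-q\log q\bigr]+\bigl[-\textstyle\sum_{l>n}p_l\log p_l+q\log q\bigr],
\]
so the $q\log q$ terms cancel and what remains is precisely $-(\sum_{l\le n}p_l)\log(\sum_{l\le n}p_l)-\sum_{l>n}p_l\log p_l<2C_3/3$. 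With this, $C_3\le 2C_3/3+\P(B=1)\MI(\ddot X_1,\ddot X_t)$ and the conclusion follows as you wrote.
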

	Now consider the process $X_t$ starting from state $b$. Assume $\mathbb{P}(X_1=s_i)=p_i'$. Then the joint probability  is $\mathbb{P}(X_1=s_i,X_t=s_j)=p_i'y_{ij}$.
	
	For any $t>1$, construct $\tilde{X}_1$ and $\tilde{X}_t$ with joint distribution $\mathbb{P}(\tilde{X}_1=s_i,\tilde{X}_t=s_j)=y_{ij}p_i'/\sum_{l\le n}p_l'$ for $i\le n$ and any $j$.
	
	\begin{lemma}
		There exists $C_4>0$ such that for any $t>1$, $\MI(\tilde{X}_1,\tilde{X}_t)\ge \MI(\ddot{X}_1,\ddot{X}_t)/C_4$.
		\label{l5}
	\end{lemma}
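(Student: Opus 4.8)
The plan is to rewrite both mutual informations as averaged relative entropies against an induced ``output mixture,'' and then compare them using the fact that $\ddot{X}$ and $\tilde{X}$ are built from the \emph{same} conditional laws $y_{ij}=\mathbb{P}(X_t=s_j\mid X_1=s_i)$ and differ only in finitely many weights on the initial states $s_1,\dots,s_n$ — weights that do not depend on $t$.

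Concretely, put $s=\sum_{l\le n}p_l$, $s'=\sum_{l\le n}p_l'$ and $w_i=p_i/s$, $w_i'=p_i'/s'$ for $i\le n$, so that $\ddot{X}_1$ has law $(w_i)_{i\le n}$ and $\tilde{X}_1$ has law $(w_i')_{i\le n}$, while the conditional law of $X_t$ given $X_1=s_i$ is the same vector $y_{i\cdot}=(y_{ij})_j$ in both constructions. Let $\nu=\sum_{i\le n}w_iy_{i\cdot}$ and $\nu'=\sum_{i\le n}w_i'y_{i\cdot}$ be the two $X_t$-marginals. Then the standard decomposition of mutual information, $\MI=\H(\text{output})-\H(\text{output}\mid\text{input})$, reads $\MI(\ddot{X}_1,\ddot{X}_t)=\sum_{i\le n}w_i\,\text{D}_{\text{KL}}(y_{i\cdot},\nu)$ and $\MI(\tilde{X}_1,\tilde{X}_t)=\sum_{i\le n}w_i'\,\text{D}_{\text{KL}}(y_{i\cdot},\nu')$.

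Next I would invoke the elementary compensation identity $\sum_{i\le n}w_i\,\text{D}_{\text{KL}}(y_{i\cdot},\rho)=\MI(\ddot{X}_1,\ddot{X}_t)+\text{D}_{\text{KL}}(\nu,\rho)$ (a one-line computation, valid for any distribution $\rho$); taking $\rho=\nu'$ gives $\MI(\ddot{X}_1,\ddot{X}_t)\le\sum_{i\le n}w_i\,\text{D}_{\text{KL}}(y_{i\cdot},\nu')$. Since $n$ is a fixed integer and $p_1,\dots,p_n,p_1',\dots,p_n'$ are fixed positive numbers, all independent of $t$ — the $p_i'$ are positive because the lift is connected, so in the continuous-time chain $b$ reaches each of the finitely many states $s_1,\dots,s_n$ in unit time with positive probability — the quantity $C_4:=\max_{1\le i\le n}w_i/w_i'$ is finite and independent of $t$, and $w_i\le C_4w_i'$. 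Because each $\text{D}_{\text{KL}}(y_{i\cdot},\nu')\ge0$,
$$\MI(\ddot{X}_1,\ddot{X}_t)\le\sum_{i\le n}w_i\,\text{D}_{\text{KL}}(y_{i\cdot},\nu')\le C_4\sum_{i\le n}w_i'\,\text{D}_{\text{KL}}(y_{i\cdot},\nu')=C_4\,\MI(\tilde{X}_1,\tilde{X}_t),$$
which is exactly the claimed bound.

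The main obstacle is the uniformity of $C_4$ in $t$: the conditional laws $y_{i\cdot}$, and hence $\nu$ and $\nu'$, move with $t$, so the comparison must be routed so that these $t$-dependent objects pass through untouched. The compensation identity does precisely this, reducing everything to bounding the ratio of the two fixed weight vectors on the fixed finite set $\{s_1,\dots,s_n\}$. The only remaining point to pin down carefully is that no $w_i'$ vanishes, which follows from irreducibility of $M$ together with connectedness of the transitive lift.
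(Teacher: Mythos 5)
Your proof is correct and lands on exactly the same constant $C_4=\max_{i\le n}\{q_i/q_i'\}$ as the paper, but the inequality is reached by a genuinely different route. The paper writes $\MI(\ddot{X}_1,\ddot{X}_t)=\H(\sum_{i\le n}q_i\mu_i)-\sum_{i\le n}q_i\H(\mu_i)$ with $\mu_i=y_{i\cdot}$, expresses $\sum_{i\le n}q_i'\mu_i$ as the convex combination $\tfrac{1}{C_4}\big(\sum_{i\le n}q_i\mu_i\big)+\sum_{i\le n}(q_i'-q_i/C_4)\mu_i$, and applies concavity of the Shannon entropy to that combination; you instead use the representation $\MI=\sum_{i\le n}w_i\,\text{D}_{\text{KL}}(y_{i\cdot},\nu)$ together with the compensation identity $\sum_{i\le n}w_i\,\text{D}_{\text{KL}}(y_{i\cdot},\nu')=\MI(\ddot{X}_1,\ddot{X}_t)+\text{D}_{\text{KL}}(\nu,\nu')$, after which the bound is a termwise comparison of nonnegative summands. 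The two arguments are logically equivalent (the compensation identity with $\rho=\nu'$ is the same Jensen gap in disguise), but yours is more modular: it makes explicit that the $t$-dependent kernels $y_{i\cdot}$ pass through untouched and that the constant depends only on the two fixed weight vectors on the fixed finite set $\{s_1,\dots,s_n\}$. You also supply a justification the paper glosses over, namely that each $p_i'$ is strictly positive (in continuous time on the connected lifted graph every state is reached from $b$ by time $1$ with positive probability), so that $C_4$ is indeed finite. Either argument is acceptable.
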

	\begin{lemma}
		There exists $C_5>0$ such that for any $t>1$, $\MI^b(X_1,X_t)\ge C_5\MI(\tilde{X}_1,\tilde{X}_t)$.
		\label{l6}
	\end{lemma}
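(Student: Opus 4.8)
The plan is to recognize $(\tilde X_1,\tilde X_t)$ as precisely the conditional law of the pair $(X_1,X_t)$, with the process started at $b$, given the event $A:=\{X_1\in\{s_1,\dots,s_n\}\}$, and then to bound $\MI^b(X_1,X_t)$ from below by the contribution of this conditioning, via the chain rule for mutual information. First I would verify the identification: by the Markov property, $y_{ij}=\mathbb P(X_t=s_j\mid X_1=s_i)$ does not depend on where the process started, so for $i\le n$ and any $j$,
$$\mathbb P(X_1=s_i,\,X_t=s_j\mid X_0=b,\,A)=\frac{p_i'\,y_{ij}}{\sum_{l\le n}p_l'},$$
which is exactly the joint distribution defining $(\tilde X_1,\tilde X_t)$. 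Hence $\MI(\tilde X_1,\tilde X_t)=\MI^b(X_1,X_t\mid A)$.

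Next, set $B:=\mathbf{1}_A$, a deterministic function of $X_1$, so that the pair $(X_1,B)$ carries the same information as $X_1$. The chain rule for mutual information then gives
$$\MI^b(X_1,X_t)=\MI^b\big((X_1,B),X_t\big)=\MI^b(B,X_t)+\MI^b(X_1,X_t\mid B)\ \ge\ \MI^b(X_1,X_t\mid B),$$
using $\MI^b(B,X_t)\ge 0$. Expanding the conditional term over the two values of $B$ and discarding the nonnegative contribution from $A^c$,
$$\MI^b(X_1,X_t\mid B)=\mathbb P^b(A)\,\MI^b(X_1,X_t\mid A)+\mathbb P^b(A^c)\,\MI^b(X_1,X_t\mid A^c)\ \ge\ \Big(\textstyle\sum_{l\le n}p_l'\Big)\MI(\tilde X_1,\tilde X_t).$$
This establishes the lemma with $C_5:=\sum_{l\le n}p_l'$.

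It remains to see that $C_5$ is a genuine positive constant independent of $t$. It is assembled solely from $b$ and the already-fixed cutoff $n$, and it is strictly positive because the state space of the lifting is connected and the chain is in continuous time, so $p_i'=\mathbb P^b(X_1=s_i)>0$ for every $i$; equivalently, the event $A$ on which we condition --- whose positivity is in any case implicit in $\tilde X$ being well defined --- has fixed positive probability. The one point demanding care, and the only place the argument could fail, is exactly this $t$-uniformity: the reduction must be routed through an event measurable with respect to $X_1$ alone, which is why we condition on the $\sigma(X_1)$-measurable indicator $B$ rather than on anything involving $X_t$. The remainder is routine chain-rule bookkeeping and poses no real obstacle.
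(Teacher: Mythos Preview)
Your argument is correct and is essentially the paper's own proof: the paper also conditions on whether $X_1$ lands in $\{s_1,\dots,s_n\}$ (constructing the complementary pair $(\hat X_1,\hat X_t)$ on $A^c$), shows via concavity of $\H$ that $\MI^b(X_1,X_t)\ge(\sum_{i\le n}p_i')\MI(\tilde X_1,\tilde X_t)+(\sum_{i>n}p_i')\MI(\hat X_1,\hat X_t)$, and takes $C_5=\sum_{i\le n}p_i'$. Your use of the chain rule $\MI^b((X_1,B),X_t)=\MI^b(B,X_t)+\MI^b(X_1,X_t\mid B)$ is simply the named information-theoretic identity underlying that concavity step, so the two proofs coincide in substance and in the resulting constant.
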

	
	Combining Lemmas \ref{l4}, \ref{l5}, \ref{l6}, then for any $t>1$, 
	$$\MI^b(X_1,X_t)\ge \frac{C_3C_5}{3C_4}>0.$$

	Thus
	$$\lim_{t\to\infty}\MI^b(X_1,X_t)\ge \frac{C_3C_5}{3C_4}>0.$$
	\qed
\end{proof}

\begin{proof}[Lemma \ref{l4}]
	Consider $X_t$ starting from $a$. For $t>1$, from $X_t$, we build $X^*_t$ as following: $X^*_t=X_t$ if $X_1=s_1,\ldots,s_n$, and $X^*_t=s^*_m$ if $X_1=s_m$ for $m>n$. Here $s^*_m$ is a virtue state, different from any $s_i$.
	
	First we prove
	$$\text{MI}(X_1,X^*_t)=\MI(\ddot{X}_1,\ddot{X}_t)(\sum_{i\le n}p_i)-\Big(\sum_{i\le n}p_i\Big)\log\Big(\sum_{i\le n}p_i\Big)-\sum_{i>n}p_i\log p_i.$$
	\begin{eqnarray*}
		&& \text{MI}(X_1,X^*_t)-\MI(\ddot{X}_1,\ddot{X}_t)\Big(\sum_{i\le n}p_i\Big)
		= \sum_j\sum_{i\le n}p_i y_{ij} \log \frac{y_{ij}}{\sum_{l\le n}p_ly_{lj}}
		\\
		&& -\sum_{i>n}p_i\log p_i-\Big(\sum_{m\le n}p_m\Big)\sum_j\sum_{i\le n}\frac{p_i}{\sum_{l\le n}p_l}y_{ij}\log\frac{y_{ij}\sum_{l\le n}p_l}{\sum_{l\le n}p_ly_{lj}}
		\\
		&=& -\sum_j\sum_{i\le n}p_i y_{ij} \log \sum_{l\le n}p_l-\sum_{i>n}p_i\log p_i
		\\
		&=& -\Big(\sum_{i\le n}p_i\Big)\log\Big(\sum_{i\le n}p_i\Big)-\sum_{i>n}p_i\log p_i.
	\end{eqnarray*}
	
	Then we prove 
	$$\text{MI}(X_1,X_t)\le \text{MI}(X_1,X^*_t).$$

	$$\text{MI}(X_1,X^*_t)-\text{MI}(X_1,X_t)$$
	
	$$=\sum_{i\le n}\sum_j p_jy_{ij}\log\frac{p_iy_{ij}}{p_i\sum_{k\le n}(p_ky_{kj})}-\sum_{i>n}p_i\log p_i$$
	$$-\sum_{i\le n}\sum_j p_jy_{ij}\log\frac{p_iy_{ij}}{p_i\sum_k(p_ky_{kj})}-\sum_{i> n}\sum_j p_jy_{ij}\log\frac{p_iy_{ij}}{p_i\sum_k(p_ky_{kj})}$$
	$$=\sum_j\sum_{i\le n}p_jy_{ij} \log\frac{\sum_k p_k y_{kj}}{\sum_{k\le n} p_ky_{kj}}-\sum_{i>n}p_i\log p_i-\sum_j \sum_{i>n} p_iy_{ij}\log (p_iy_{ij})$$
	$$+\sum_{i>n}\sum_j p_i y_{ij}\log p_i+\sum_j\Big(\sum_{i>n}p_iy_{ij}\Big)\log\Big(\sum_k p_ky_{kj}\Big)$$
	$$\ge \sum_j \Big(\sum_{i>n}p_iy_{ij}\Big)\log \Big(\sum_{i>n}p_iy_{ij}\Big)-\sum_j\sum_{i>n}p_iy_{ij}\log (p_iy_{ij})$$
	$$\ge \sum_j \Big(\sum_{i>n}p_iy_{ij}\Big)\log \Big(\sum_{i>n}p_iy_{ij}\Big)-\sum_j \sum_{i>n}p_iy_{ij}\log \Big(\sum_{l>n}p_ly_{lj}\Big)=0.$$
	
	Since 
	$$\text{MI}(X_1,X^*_t)\ge \text{MI}(X_1,X_t)\ge C_3,$$ 	
	$$-\sum_{l>n}p_l\log p_l<C_3/3,$$  
	$$-\Big(\sum_{l\le n}p_l\Big)\log\Big(\sum_{l\le n}p_l\Big)<C_3/3,$$ 
	we have 
	$$\MI(\ddot{X}_1,\ddot{X}_t)\ge C_3/3.$$
	\qed
\end{proof}

\begin{proof}[Lemma \ref{l5}]
	Define $q_i=p_i/\sum_{j\le n}p_j$, $q_i'=p_i'/\sum_{j\le n}p_j'$. Let $\mu_i$ be the distribution that takes $s_j$ with probability $y_{ij}$. $\sum_{i\le n}q_i\mu_i$ is a distribution that takes $s_j$ with probability $\sum_{i\le n}q_iy_{ij}$.
	
	$$\MI(\ddot{X}_1,\ddot{X}_t)=-\sum_{i\le n}q_i \log q_i +\text{H}\Big(\sum_{i\le n}q_i\mu_i\Big)+\sum_{i\le n}\sum_jq_iy_{ij} \log (q_iy_{ij})$$
	$$=\text{H}\Big(\sum_{i\le n}q_i\mu_i\Big)-\sum_{i\le n}q_i\text{H}(\mu_i)$$
	Since $p_i$ and $p_i'$ are positive, we could define $C_4=\max_{i\le n}\{q_i/q_i'\}$, which is finite and positive.
	
	Since the entropy function $\text{H}$ is concave down, we have Jensen's inequality $\sum_{i\le n}r_i\text{H}(\mu_i)\le \text{H}(\sum_{i\le n}r_i\mu_i)$ for $r_i\ge 0$ with $\sum_{i\le n}r_i=1$.
	
	Thus
	$$\frac{1}{C_4}\text{H}\Big(\sum_{i\le n}q_i\mu_i\Big)+\sum_{i\le n}(q_i'-q_i/C_4)\text{H}(\mu_i)\le \text{H}\Big(\sum_{i\le n}q_i'\mu_i\Big),$$
	
	from which we have
	$$\MI(\ddot{X}_1,\ddot{X}_t)=\text{H}\Big(\sum_{i\le n}q_i\mu_i\Big)-\sum_{i\le n}q_i\text{H}(\mu_i)$$
	$$\le C_4\text{H}\Big(\sum_{i\le n}q_i'\mu_i\Big)-C_4\sum_{i\le n}q_i'\text{H}(\mu_i)=C_4\MI(\tilde{X}_1,\tilde{X}_t).$$
	\qed
\end{proof}

\begin{proof}[Lemma \ref{l6}]
	Construct $\hat{X}_1$ and $\hat{X}_t$ with joint distribution $$\mathbb{P}(\hat{X}_1=s_i,\hat{X}_t=s_j)=y_{ij}p_i'/\sum_{l\ge n}p_l'$$ for $i\ge n$ and any $j$.
	
	$$\Big[\sum_{i\le n}p_i'\Big]\MI(\tilde{X}_1,\tilde{X}_t)+\Big[\sum_{i>n}p_i'\Big]\MI(\hat{X}_1,\hat{X}_t)$$
	$$=\Big[\sum_{i\le n}p_i'\Big]\text{H}\left[\frac{\sum_{i\le n}p_i'\mu_i}{\sum_{i\le n}p_i'}\right]-\sum_{i\le n}p_i'\text{H}(\mu_i)+\Big[\sum_{i>n}p_i'\Big]\text{H}\left[\frac{\sum_{i> n}p_i'\mu_i}{\sum_{i> n}p_i'}\right]-\sum_{i> n}p_i'\text{H}(\mu_i)$$
	$$\le \text{H}\Big[\sum_{i=1}^\infty p_i'\mu_i\Big]-\sum_{i=1}^\infty p_i'\text{H}(\mu_i)=\MI^b(X_1,X_t).$$
	Choose $C_5=\sum_{i\le n}p_i'>0$, then $\MI^b(X_1,X_t)\ge C_5\MI(\tilde{X}_1,\tilde{X}_t)$.
	\qed
\end{proof}

\section{Lifting and Thermodynamic Quantities of Multidimensional Diffusion Processes}
\label{sec-n-torus}

\subsection{Diffusion processes on Euclidean space and torus}

Consider a time-homogeneous diffusion process $\bm{X}(t)$ on $\mathbb{R}^n$:

$$\mathrm{d} \bm{X}(t)=\bm{\Gamma}(\bm{X})\mathrm{d}\bm{B} (t)+\bm{b}(\bm{X})\mathrm{d}t,$$
where $\bm{B}(t)$ is an $n$-dimensional standard Brownian motion. The drift parameter $\bm{b}(\bm{x})$ is $\mathbb{R}^n\to \mathbb{R}^n$, $C^\infty$, with period $1$ for each component. The diffusion parameter $\bm{\Gamma}(\bm{x})$ is $\mathbb{R}^n\to \mathbb{R}^{n\times n}$,  non-degenerate for each $\bm{x}$, $C^\infty$, and $1$-periodic for each component.   We shall
also denote $\bm{D}(\bm{x})=\frac{1}{2}\bm{\Gamma}(\bm{x})\bm{\Gamma}^T(\bm{x})$.  It is positive definite for
each $\bm{x}$.  All vectors are $n\times 1$.

The transition probability density function $f\big(\bm{x},t|\bm{x}_0\big)$ of the diffusion process $\bm{X}(t)$ is the
fundamental solution to the linear, Kolmogorov
forward equation:
$$
\frac{\partial f(\bm{x},t)}{\partial t}=-\nabla\cdot\Big[\bm{b}(\bm{x})f(\bm{x},t)\Big]+\nabla\cdot \nabla\cdot \left[\bm{D}(\bm{x})f(\bm{x},t)\right].
$$
For an $n\times n$ matrix $\bm{M}$ with $i$-th row $\bm{M}_i$, $\nabla\cdot \bm{M}$ is defined as $n\times 1$ vector $(\nabla\cdot \bm{M}_1,\cdots,\nabla\cdot \bm{M}_n)^T$.

In parallel, consider a time-homogeneous diffusion process $\bar{\bm{X}}(t)$ on $\mathbb{T}^n$, where $\mathbb{T}^n$ is defined as $[0,1)^n$:
$$\mathrm{d} \bar{\bm{X}}(t)=\bar{\bm{\Gamma}}(\bar{\bm{X}})\mathrm{d}\bar{\bm{B}} (t)+\bar{\bm{b}}(\bar{\bm{X}})\mathrm{d}t.$$
Here $\bar{\bm{B}}(t)$ is an $n$-dimensional standard Brownian motion on $\mathbb{T}^n$. $\bar{\bm{\Gamma}}(\cdot)$ and $\bar{\bm{b}}(\cdot)$ are the restrictions of $\bm{\Gamma}(\cdot)$ and $\bm{b}(\cdot)$ on $\mathbb{T}^n$.  Similarly, the transition probability 
density function for $\bar{\bm{X}}(t)$, $\bar{f}(\bar{\bm{x}},t|\bar{\bm{x}}_0)$ satisfies the Kolmogorov forward equation:
$$
\frac{\partial \bar{f}(\bar{\bm{x}},t)}{\partial t}=-\nabla\cdot \Big[\bar{\bm{b}}(\bar{\bm{x}})\bar{f}(\bar{\bm{x}},t)\Big]+\nabla\cdot \nabla\cdot \left[ \bar{\bm{D}}(\bar{\bm{x}})\bar{f}(\bar{\bm{x}},t)\right],
$$
in which $\bar{\bm{D}}(\cdot)$ is the restriction of $\bm{D}(\cdot)$ on $\mathbb{T}^n$.

For the above diffusion process $\bm{X}(t)$ on $\mathbb{R}^n$ with periodic diffusion and drift, we can fold it to $\mathbb{T}^n$ by  $$\bar{\bm{X}}(t)=\bm{X}(t) \mod 1,$$
where $\mod 1$ is for every component. The folding result is exactly the above diffusion process $\bar{\bm{X}}(t)$ on $\mathbb{T}^n$ with density function
$$
\bar{f}(\bar{\bm{x}},t)=\sum_{i_1=-\infty}^{+\infty}\cdots\sum_{i_n=-\infty}^{+\infty}
f(\bar{\bm{x}}
+ i_1 \bm{e}_1+\cdots+ i_n \bm{e}_n,t\big),
$$
where $\bm{e}_k$ is an elementary $n$-vector, with $1$ as its $k$-th component and $0$ for other components.

\subsection{Stationary distributions and measures}

The diffusion process $\bar{\bm{X}}(t)$ on $\mathbb{T}^n$ has a stationary distribution $\bar{\rho}(\bar{\bm{x}})$. Its $1$-periodic continuation to $\mathbb{R}^n$,
$$\rho(\bm{x})=\bar{\rho}(\bm{x}\hspace{-0.25cm}\mod 1),$$
is a stationary measure of the diffusion process $\bm{X}(t)$ on $\mathbb{R}^n$.

To further study the stationary distributions and measures, we need to consider the relative entropy of $f(\bm{x},t)$ with respect to any stationary measure $\nu(\bm{x})$
$$\text{D}_{\text{KL}}[f(t),\nu]=\int_{\mathbb{R}^n}f(\bm{x},t)\log\frac{f(\bm{x},t)}{\nu(\bm{x})}\mathrm{d}\bm{x}.$$

In this paper, we assume that $f(t)$ and its spatial derivatives decay fast enough, such that related integrations on $\partial \mathbb{R}^n$ are all $0$.

\begin{lemma}
	\label{dfm}
	$\text{D}_{\text{KL}}[f(t),\nu]$ is monotonically decreasing with $t$.
\end{lemma}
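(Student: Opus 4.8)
The plan is to adapt the computation behind Lemma~\ref{voigt} to the continuous setting, with integration by parts playing the role of the discrete summation rearrangements. First I would write the Kolmogorov forward equation in conservation form $\partial_t f=-\nabla\cdot\bm{J}$, where the probability flux is
$$\bm{J}(\bm{x},t)\;=\;\bm{b}(\bm{x})f(\bm{x},t)\;-\;\nabla\cdot\!\big[\bm{D}(\bm{x})f(\bm{x},t)\big],$$
with $\big(\nabla\cdot[\bm{D}f]\big)_i=\sum_j\partial_j\!\big(D_{ij}f\big)$. Since $\nu$ is a stationary measure, the associated flux $\bm{J}^\nu(\bm{x})=\bm{b}(\bm{x})\nu(\bm{x})-\nabla\cdot[\bm{D}(\bm{x})\nu(\bm{x})]$ is divergence free: $\nabla\cdot\bm{J}^\nu=0$. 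I would also record at the outset that $\nu=\bar{\rho}(\,\cdot\bmod 1)$ with $\bar{\rho}$ smooth and strictly positive on the compact torus, so $\log\nu$ is bounded and $\text{D}_{\text{KL}}[f(t),\nu]$, together with all integrals below, is finite under the stated decay hypotheses; this also makes $g:=f/\nu$ well defined and positive.

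The main computation then runs as follows. Differentiating under the integral sign and using $\int_{\mathbb{R}^n}\partial_t f\,\mathrm{d}\bm{x}=0$ (conservation of mass), then integrating by parts and discarding the boundary term at infinity,
$$\frac{\mathrm{d}}{\mathrm{d}t}\,\text{D}_{\text{KL}}[f(t),\nu]=\int_{\mathbb{R}^n}(\partial_t f)\log\frac{f}{\nu}\,\mathrm{d}\bm{x}=\int_{\mathbb{R}^n}\bm{J}\cdot\nabla\log g\;\mathrm{d}\bm{x}.$$
Applying the Leibniz rule componentwise to $\nabla\cdot[\bm{D}(g\nu)]$ gives the key identity $\bm{J}=g\,\bm{J}^\nu-\nu\,\bm{D}\nabla g$, whence, using $\nabla\log g=\nabla g/g$,
$$\frac{\mathrm{d}}{\mathrm{d}t}\,\text{D}_{\text{KL}}[f(t),\nu]=\int_{\mathbb{R}^n}\bm{J}^\nu\cdot\nabla g\;\mathrm{d}\bm{x}-\int_{\mathbb{R}^n}\frac{\nu}{g}\,(\nabla g)^{T}\bm{D}\,(\nabla g)\;\mathrm{d}\bm{x}.$$
A second integration by parts turns the first integral into $-\int_{\mathbb{R}^n}(\nabla\cdot\bm{J}^\nu)\,g\,\mathrm{d}\bm{x}=0$ by stationarity of $\nu$, while the second integral is $\ge 0$ because $\bm{D}(\bm{x})$ is positive definite and $\nu,g>0$. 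Therefore $\mathrm{d}\,\text{D}_{\text{KL}}[f(t),\nu]/\mathrm{d}t\le 0$, with strict decrease unless $\nabla g\equiv 0$, i.e.\ $f(t)=c\,\nu$; this is the exact diffusion counterpart of the relative-entropy-production formula in the proof of Lemma~\ref{voigt}, the Dirichlet-type integral replacing the edgewise sum.

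The algebraic steps — the two integrations by parts and the componentwise Leibniz expansion of $\nabla\cdot[\bm{D}f]$ — are routine and I would simply carry them out in coordinates. The only genuine subtlety is analytic rather than algebraic: justifying differentiation under the integral sign and the vanishing of every boundary term as $|\bm{x}|\to\infty$. The paper has already assumed that $f(t)$ and its spatial derivatives decay fast enough that integrals over $\partial\mathbb{R}^n$ vanish, so under that hypothesis there is no real obstacle; the one point I would still make explicit is why the dissipation integrand $(\nu/g)(\nabla g)^{T}\bm{D}\,\nabla g$ is integrable, which follows from the same decay assumption together with the uniform two-sided bounds on $\nu$ inherited from the compact torus.
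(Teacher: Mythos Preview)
Your argument is correct and is essentially the paper's own proof, reorganized: both differentiate $\text{D}_{\text{KL}}[f(t),\nu]$, integrate by parts using the forward equation and the stationarity of $\nu$, and arrive at the same non-positive Dirichlet form $-\int \frac{\nu}{g}(\nabla g)^{T}\bm{D}\,\nabla g\,\mathrm{d}\bm{x}=-\int (f\nabla\nu-\nu\nabla f)^{T}\frac{\bm{D}}{f\nu^{2}}(f\nabla\nu-\nu\nabla f)\,\mathrm{d}\bm{x}$. One small caveat: the lemma is stated for an \emph{arbitrary} stationary measure $\nu$, not just the periodic extension $\rho=\bar{\rho}(\,\cdot\bmod 1)$, so your boundedness remark about $\nu$ should be dropped (the paper simply assumes the needed decay/integrability), though your flux identity $\bm{J}=g\,\bm{J}^{\nu}-\nu\,\bm{D}\nabla g$ and the rest of the computation go through unchanged for general $\nu$.
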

\begin{proof}
	$$\text{D}_{\text{KL}}[f(t),\nu]=\frac{\mathrm{d}}{\mathrm{d}t}\int_{\mathbb{R}^n}f\log\frac{f}{\nu}\mathrm{d}\bm{x}=\int_{\mathbb{R}^n}\frac{\partial f}{\partial t}\log\frac{f}{\nu}\mathrm{d}\bm{x}+\int_{\mathbb{R}^n}\frac{\partial f}{\partial t}\mathrm{d}\bm{x}$$
	$$=\int_{\mathbb{R}^n}\left[-\nabla\cdot (\bm{b}f)+\nabla\cdot \nabla\cdot (\bm{D}f)\right]\log\frac{f}{\nu}\mathrm{d}\bm{x}$$
	$$=\int_{\partial\mathbb{R}^n}\nabla\cdot\left[- \bm{b}f\log\frac{f}{\nu}+ \nabla\cdot \left(\bm{D}f\right)\log\frac{f}{\nu}\right]\mathrm{d}S-\int_{\mathbb{R}^n}\left[- \bm{b}f+ \nabla\cdot \left(\bm{D}f\right)\right]\cdot\nabla\left(\log\frac{f}{\nu}\right)\mathrm{d}\bm{x}$$
	$$=-\int_{\mathbb{R}^n}\left[- \bm{b}f+f \nabla\cdot \bm{D}+\bm{D}\nabla f\right]\cdot\left(\frac{\nu}{f} \nabla\frac{f}{\nu}\right)\mathrm{d}\bm{x}$$
	$$=-\int_{\mathbb{R}^n}\left[- \bm{b}\nu+ \nu\nabla\cdot \bm{D}+\bm{D}\frac{\nu}{f}\nabla f\right]\cdot\nabla\left(\frac{f}{\nu}\right)\mathrm{d}\bm{x}$$
	$$=-\int_{\mathbb{R}^n}\left[- \bm{b}\nu+ \nabla\cdot \left(\bm{D}\nu\right)-\bm{D}\nabla \nu+\bm{D}\frac{\nu}{f}\nabla f\right]\cdot\nabla\left(\frac{f}{\nu}\right)\mathrm{d}\bm{x}$$
	$$=-\int_{\partial\mathbb{R}^n}\frac{f}{\nu}[\nabla\cdot(\bm{D}\nu)-\bm{b}\nu]\mathrm{d}S+\int_{\mathbb{R}^n}\frac{f}{\nu}\nabla\cdot\left[- \bm{b}\nu+ \nabla\cdot \left(\bm{D}\nu\right)\right]\mathrm{d}\bm{x}$$
	$$-\int_{\mathbb{R}^n}\left[\bm{D} \left(\frac{\nu}{f}\nabla f-\nabla \nu\right)\right]\cdot\nabla\left(\frac{f}{\nu}\right)\mathrm{d}\bm{x}$$
	$$=-\int_{\partial\mathbb{R}^n}\nabla\cdot\left[\bm{D} \left(\frac{\nu}{f}\nabla f-\nabla \nu\right)\frac{f}{\nu}\right]\mathrm{d}S+\int_{\mathbb{R}^n}\frac{f}{\nu}\nabla\cdot\left[\bm{D} \left(\frac{\nu}{f}\nabla f-\nabla \nu\right)\right]\mathrm{d}\bm{x}$$
	$$=\int_{\mathbb{R}^n}-\frac{f}{\nu}\nabla\cdot\left(\bm{D} f \nabla\frac{\nu}{f}\right)\mathrm{d}\bm{x}$$
	$$=\int_{\partial\mathbb{R}^n}-\nabla\cdot\left(\frac{f}{\nu} \bm{D} f \nabla\frac{\nu}{f}\right)\mathrm{d}S+\int_{\mathbb{R}^n}\left(\bm{D} f \nabla\frac{\nu}{f}\right)\cdot\nabla\frac{f}{\nu}\mathrm{d}\bm{x}$$
	$$=-\int_{\mathbb{R}^n}\left(f\nabla \nu-\nu\nabla f\right)^T\frac{\bm{D}}{f\nu^2} \left(f\nabla \nu-\nu\nabla f\right)\mathrm{d}\bm{x},$$
	which is non-positive. It is $0$ if and only if $f\nabla \nu=\nu\nabla f$, namely $\nabla \log f=\nabla \log \nu$, thus $f=c\nu$ for a constant $c$.
	\qed
\end{proof}

The above lemma is also valid for diffusion on torus, thus $\text{D}_{\text{KL}}[\bar{f}(t),\bar{\rho}]$ is monotonically decreasing for any initial distribution $\bar{f}(0)$. If the diffusion on torus has another stationary distribution $\bar{\theta}$, then $\text{D}_{\text{KL}}[\bar{\theta},\bar{\rho}]$ is a constant. However it should decrease unless $\bar{\rho}=c\bar{\theta}$, which means $\bar{\theta}=\bar{\rho}$. Therefore, the diffusion on torus has a unique stationary distribution, and any initial distribution will converge to it.

The lifted diffusion on $\mathbb{R}^n$ has no stationary distribution. An intuition is that $\mathbb{R}^n$ is not compact, and the density function $f(\bm{x},t)$ will converge to $0$ at each $\bm{x}$ as $t\to \infty$.
\begin{proposition}
	\label{nsd}
	The lifted diffusion process has no stationary probability distribution.
\end{proposition}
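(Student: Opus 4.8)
The plan is to argue by contradiction, in exact parallel to the proof of Proposition \ref{nsm} in the discrete case, with Lemma \ref{dfm} now playing the role that Lemma \ref{voigt} played there. Suppose the lifted diffusion on $\mathbb{R}^n$ admitted a stationary probability density $\eta(\bm{x})$. Run the process from the initial density $f(\bm{x},0)=\eta(\bm{x})$; by stationarity $f(\bm{x},t)=\eta(\bm{x})$ for all $t\ge 0$. Compare this solution against the periodic stationary measure $\rho(\bm{x})=\bar{\rho}(\bm{x}\bmod 1)$, the $1$-periodic lift of the torus stationary distribution $\bar{\rho}$, which was shown above to be a stationary measure of the lifted diffusion. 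Applying Lemma \ref{dfm} with $\nu=\rho$, the quantity $\text{D}_{\text{KL}}[f(t),\rho]=\text{D}_{\text{KL}}[\eta,\rho]$ is independent of $t$, so its time derivative vanishes identically. By the equality clause of Lemma \ref{dfm}, and using that $\bm{D}$ is positive definite, this forces $\eta\,\nabla\rho=\rho\,\nabla\eta$ everywhere, i.e. $\nabla\log\eta=\nabla\log\rho$ on the connected set where both are positive, hence $\eta=c\,\rho$ for some constant $c>0$.

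To close the argument I would then observe that $\rho$ is non-normalizable on $\mathbb{R}^n$: since $\bar{\rho}$ integrates to $1$ over a single fundamental domain $[0,1)^n$,
$$\int_{\mathbb{R}^n}\rho(\bm{x})\,\mathrm{d}\bm{x}=\sum_{\bm{i}\in\mathbb{Z}^n}\int_{[0,1)^n}\bar{\rho}(\bar{\bm{x}})\,\mathrm{d}\bar{\bm{x}}=\sum_{\bm{i}\in\mathbb{Z}^n}1=+\infty .$$
Therefore $\eta=c\rho$ cannot have total mass $1$ for any $c>0$ (and $c=0$ is impossible for a probability density), contradicting the assumption that $\eta$ is a probability distribution. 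This is precisely the informal picture already indicated in the text: the dynamics forces any invariant density to be $1$-periodic, and a nonzero periodic density has infinite mass, so $f(\bm{x},t)\to 0$ pointwise rather than converging to a stationary probability.

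Two technical points must be discharged en route. First, one needs $\bar{\rho}$ to be continuous and bounded away from $0$ and $\infty$ on the compact torus; this follows from elliptic regularity together with the strong maximum principle, since $\bar{\rho}$ solves the stationary Fokker--Planck equation with the uniformly elliptic, smooth operator built from $\bar{\bm{D}}$ and $\bar{\bm{b}}$. Consequently $0<c_0\le\rho(\bm{x})\le c_1<\infty$ on all of $\mathbb{R}^n$, which makes $\text{D}_{\text{KL}}[\eta,\rho]$ well defined and finite under the paper's standing integrability assumptions, and which also guarantees that the boundary terms in the proof of Lemma \ref{dfm} vanish when $\nu=\rho$: those terms involve $f/\rho$ and $\nabla(f/\rho)$, and are controlled because $\rho$ and $\nabla\rho$ are bounded while $f=\eta$ and $\nabla\eta$ decay at infinity. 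Second, one should confirm that the equality case of Lemma \ref{dfm} is being invoked legitimately for the stationary solution $f(t)\equiv\eta$; this is routine, since such an $\eta$, being the invariant density of a non-degenerate smooth diffusion, is itself smooth and strictly positive.

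I expect the only genuine obstacle to be that first technical point — cleanly justifying the regularity and two-sided positivity of $\bar{\rho}$ and the finiteness of the relative entropy — rather than the algebra, which is immediate once Lemma \ref{dfm} is in hand. Apart from that, the proof is a verbatim transcription of the discrete argument in Proposition \ref{nsm}.
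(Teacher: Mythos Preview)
Your proposal is correct and follows essentially the same argument as the paper: assume a stationary probability density, compare it to the periodic measure $\rho$ via Lemma \ref{dfm}, use the equality case to conclude it is a constant multiple of $\rho$, and derive a contradiction from the non-normalizability of $\rho$. The only difference is that you spell out the regularity and integrability checks in more detail than the paper, which simply appeals to its standing assumptions on decay at infinity.
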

\begin{proof}
	Assume there is a stationary probability distribution $p$. Let $f(t)=p$, then $\text{D}_{\text{KL}}(p,\rho)$ is a constant. This is true only if the equality holds in Lemma \ref{dfm}, which means $p$ and $\rho$ only differ by a constant multiple. However $\rho$ is non-normalizable, so is $f$.
	\qed
\end{proof}

Different from lifted Markov chain, the detailed balanced stationary measure does not always exist for lifted diffusion.

If the detailed balanced stationary measure exists, then the probability flux satisfies 
$$\bm{J}=\nabla\cdot(\bm{D}f)-\bm{b}f=\bm{0}.$$
This is equivalent to 
$$-\bm{D}^{-1}(\nabla\cdot \bm{D}-\bm{b})=\nabla\log f.$$

In general, this is impossible, since $\bm{D}^{-1}(\nabla\cdot \bm{D}-\bm{b})$ may not be curl-free (conservative). 

Global potential is incompatible with asymmetrical cycles. The idea of lifting is to expand asymmetric cycles. For Markov chains, since cycle number is finite, we can expand all of them, such that in the lifted Markov chain, there is no asymmetric cycle. For diffusion process on $\mathbb{T}^n$, there are $n$ topologically non-trivial basic cycles, which might be asymmetric. We expand these cycles and lift the process into $\mathbb{R}^n$. However, there are still infinite many local cycles in the folded process, which are homotopic to a single point. In such local cycles, the curl is not always $0$, therefore these cycles might be asymmetric, and we cannot expand all of them \cite{zdw}. 

Assume that $-\bm{D}^{-1}(\nabla\cdot \bm{D}-\bm{b})=\nabla g(\bm{x})$ is curl-free in $\mathbb{R}^n$. Then there is a detailed balanced stationary measure, $\mu(\bm{x})=ce^{g(\bm{x})}$, where $c$ is any positive number. Then $\bm{J}=\nabla\cdot(\bm{D}\mu)-\bm{b}\mu=\bm{0}$.

\subsection{Instantaneous entropy production rate, free energy, and housekeeping heat}
For the lifted diffusion process $\bm{X}(t)$ on $\mathbb{R}^n$ with probability density function $f(\bm{x},t)$, one could define several thermodynamics quantities: entropy production rate, free energy and housekeeping heat.

\begin{definition}
	The instantaneous free energy with respect to stationary measure $\nu$, $F^\nu(t)$, is defined as $F^\nu(t)=\text{D}_{\text{KL}}(f,\nu)$. Its time derivative is
	$$\mathrm{d}F^\nu(t)/\mathrm{d}t=-\int_{\mathbb{R}^n}\big[- \bm{b}f+f \nabla\cdot \bm{D}+\bm{D}\nabla f\big]\cdot\left[\frac{\nabla f}{f} -\frac{\nabla \nu}{\nu}\right]\mathrm{d}\bm{x}.$$
\end{definition}
\begin{definition}
	The instantaneous entropy production rate $e_p(t)$ is defined as \cite{JQQ}
	$$e_p(t)=\int_{\mathbb{R}^n} (-\bm{b}f+f\nabla\cdot \bm{D}+\bm{D}\nabla f)^Tf^{-1}\bm{D}^{-1}(-\bm{b}f+f\nabla\cdot \bm{D}+\bm{D}\nabla f)\mathrm{d}\bm{x}$$
	$$=\int_{\mathbb{R}^n}\big[- \bm{b}f+f \nabla\cdot \bm{D}+\bm{D}\nabla f\big]\cdot\left[\frac{\nabla f}{f} +\bm{D}^{-1}\nabla \cdot \bm{D}-\bm{D}^{-1}\bm{b}\right]\mathrm{d}\bm{x}.$$
\end{definition}
\begin{definition}
	The instantaneous housekeeping heat with respect to stationary measure $\nu$, $Q_{hk}^\nu(t)$, is defined as 
	$$Q_{hk}^\nu(t)=e_p(t)+\mathrm{d}F^\nu(t)/\mathrm{d}t=\int_{\mathbb{R}^n}\big[- \bm{b}f+f \nabla\cdot \bm{D}+\bm{D}\nabla f\big]\cdot\left[\frac{\nabla \nu}{\nu} +\bm{D}^{-1}\nabla \cdot \bm{D}-\bm{D}^{-1}\bm{b}\right]\mathrm{d}\bm{x}.$$
\end{definition}

These quantities can be also defined for the diffusion process $\bar{\bm X}(t)$ on $\mathbb{T}^n$, denoted as $\bar{F}(t), \bar{e_p}, \bar{Q}_{hk}$. One just needs to replace $\bm{b}, \bm{D}, f, \nu, \mathbb{R}^n$ by $\bar{\bm{b}}, \bar{\bm{D}}, \bar{f}, \bar{\rho}, \mathbb{T}^n$.

From Lemma \ref{dfm}, $\mathrm{d}F^\nu(t)/\mathrm{d}t\le 0$. Since $\bm{D}$ is positive definite, $e_p(t)\ge 0$. For $Q^\nu_{hk}(t)$, we have the same result.
\begin{proposition}
	$Q_{hk}^\nu(t)\ge 0$.
\end{proposition}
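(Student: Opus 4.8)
The plan is to mimic exactly the proof given for the discrete analogue ``$Q_{hk}^\theta(t)\ge 0$'' in Section \ref{sec-3}, replacing sums by integrals and the elementary inequality $\log y\ge 1-1/y$ by the pointwise nonnegativity of a quadratic form associated with the positive-definite matrix $\bm D$. Concretely, start from the integral expression
$$Q_{hk}^\nu(t)=\int_{\mathbb{R}^n}\big[- \bm{b}f+f \nabla\cdot \bm{D}+\bm{D}\nabla f\big]\cdot\left[\frac{\nabla \nu}{\nu} +\bm{D}^{-1}\nabla \cdot \bm{D}-\bm{D}^{-1}\bm{b}\right]\mathrm{d}\bm{x},$$
and rewrite the first bracketed vector as $\bm J + 2 f\,\bm D\big(\tfrac{\nabla f}{f}+\bm D^{-1}\nabla\cdot\bm D - \bm D^{-1}\bm b\big)$ is not quite it — better: observe that the stationary flux of $\nu$ vanishes, $\bm J_\nu := \nabla\cdot(\bm D\nu)-\bm b\nu = \nu\big(\bm D\tfrac{\nabla\nu}{\nu}+\nabla\cdot\bm D - \bm b\big) = \bm 0$ if $\nu$ were detailed-balanced; in the general (non-detailed-balanced) case one instead uses that $\nabla\cdot\bm J_\nu=0$ (stationarity). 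So the cleaner route is: introduce the ``$\nu$-thermodynamic force'' $\bm V_\nu := \bm D^{-1}\!\left(\tfrac{\nabla\nu}{\nu}\bm D\right)$ — let me just say it plainly below.

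First I would set $\bm A(\bm x,t) := -\bm b f + f\nabla\cdot\bm D + \bm D\nabla f$ (the current-like vector appearing everywhere) and note the algebraic identity that the second bracket equals $\bm D^{-1}\bm A/f - \nabla\log(f/\nu)\cdot(\text{stuff})$... rather than fish for the slickest rewrite, the honest plan is the parallel-transport one: write
$$Q_{hk}^\nu(t)=\int_{\mathbb{R}^n} f\,\bm A^T\! \big(\tfrac{1}{f}\bm D^{-1}\bm A\big)\,\mathrm{d}\bm x \;-\; \int_{\mathbb{R}^n}\bm A\cdot\nabla\log\frac{f}{\nu}\,\mathrm{d}\bm x$$
by adding and subtracting $\tfrac{\nabla f}{f}$ inside the second bracket and recognizing $e_p(t)=\int f^{-1}\bm A^T\bm D^{-1}\bm A\,\mathrm d\bm x$ together with the formula for $\mathrm dF^\nu/\mathrm dt$; but that is just the \emph{definition} $Q_{hk}^\nu=e_p+\mathrm dF^\nu/\mathrm dt$ again, so it proves nothing. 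The genuine step is therefore to integrate by parts the cross term $-\int_{\mathbb{R}^n}\bm A\cdot\nabla\log(f/\nu)\,\mathrm d\bm x$ the same way Lemma \ref{dfm} does, and then complete the square. Following Lemma \ref{dfm} verbatim, $\mathrm dF^\nu/\mathrm dt = -\int_{\mathbb{R}^n}(f\nabla\nu-\nu\nabla f)^T\tfrac{\bm D}{f\nu^2}(f\nabla\nu-\nu\nabla f)\,\mathrm d\bm x$, and $e_p = \int_{\mathbb{R}^n} f^{-1}\bm A^T\bm D^{-1}\bm A\,\mathrm d\bm x$ with $\bm A = -\bm b f+f\nabla\cdot\bm D+\bm D\nabla f$. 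The plan is to expand $Q_{hk}^\nu = e_p + \mathrm dF^\nu/\mathrm dt$ and show the integrand is a perfect square: one checks that, writing $\bm w := \bm D^{-1}\bm A/f = \nabla\log f + \bm D^{-1}\nabla\cdot\bm D - \bm D^{-1}\bm b$ and $\bm u := \nabla\log\nu - \nabla\log f$, we have $f^{-1}\bm A^T\bm D^{-1}\bm A = f\,\bm w^T\bm D\bm w$, the detailed-balance flux quantity is $f(\bm w+\bm u)^T\bm D(\bm w+\bm u)$ when $\nu$ is genuinely stationary-with-zero-flux — but since $\nu$ need only be a stationary \emph{measure}, the right statement is $Q_{hk}^\nu(t)=\int_{\mathbb{R}^n} f\,(\bm w+\bm u)^T\bm D\,(\bm w+\bm u)\,\mathrm d\bm x$ after using $\nabla\cdot[\nu(\bm D\bm w_\nu)]=0$ (stationarity of $\nu$) to discard a divergence term, where $\bm w_\nu := \nabla\log\nu + \bm D^{-1}\nabla\cdot\bm D - \bm D^{-1}\bm b$ is exactly the second bracket in the definition of $Q_{hk}^\nu$. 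Then positive-definiteness of $\bm D$ and $f\ge 0$ give $Q_{hk}^\nu(t)\ge 0$ immediately.

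So the concrete key steps, in order, are: (1) expand $\bm A\cdot\bm w_\nu$ (the integrand of $Q_{hk}^\nu$) and integrate by parts the $\bm D\nabla f$ contribution, using the stated decay hypothesis on $f$ to kill the boundary term at infinity; (2) invoke stationarity of $\nu$, i.e. $\nabla\cdot\big[\nu(\bm D\bm w_\nu)\big]=0$, to rewrite $\int_{\mathbb R^n} \nu\,\bm w_\nu^T\bm D\bm w_\nu$-type pieces so that everything recombines; (3) complete the square to reach $Q_{hk}^\nu(t)=\int_{\mathbb{R}^n} f\,\bm w_\nu^T\,\bm D\,\bm w_\nu\,\mathrm d\bm x$ — or, equivalently, run the exact integration-by-parts chain of Lemma \ref{dfm} but with $\nu$ in both slots of the quadratic form rather than $f$ and $\nu$; (4) conclude by $\bm D\succ 0$ and $f\ge 0$. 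The main obstacle I anticipate is step (2)/(3): getting the cross-terms to cancel cleanly requires using the stationarity of $\nu$ in exactly the right place (the identity $\nabla\cdot(\nabla\cdot(\bm D\nu)-\bm b\nu)=0$), and one must be careful that, unlike in Lemma \ref{dfm} where $f\nabla\nu-\nu\nabla f$ appears, here the relevant combination is $\nabla\cdot(\bm D\nu)-\bm b\nu$ divided by $\nu$, which need not be a gradient — but it does not need to be, since we only need its quadratic form against $\bm D^{-1}$ (or $\bm D$) to be nonnegative, which is automatic. A secondary technical point is justifying all integrations by parts under the stated ``fast decay'' assumption on $f$ and its derivatives; I would simply cite that blanket hypothesis as was done in Lemma \ref{dfm}.
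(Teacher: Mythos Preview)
Your final plan (steps (1)--(4)) is correct and is exactly the paper's argument: rewrite the integrand as $f\,\bm w_\nu^T\bm D\,\bm w_f$ with $\bm w_f=\nabla\log f+\bm D^{-1}\nabla\cdot\bm D-\bm D^{-1}\bm b$, split $\bm w_f=\bm w_\nu+(\nabla\log f-\nabla\log\nu)$, observe that the first piece gives the nonnegative quadratic form $\int f\,\bm w_\nu^T\bm D\,\bm w_\nu\,\mathrm d\bm x$, and show the cross term $\int (\nu\bm D\bm w_\nu)\cdot\nabla(f/\nu)\,\mathrm d\bm x$ vanishes after one integration by parts using $\nabla\cdot\big[\nabla\cdot(\bm D\nu)-\bm b\nu\big]=0$. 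The earlier false starts in your write-up can be deleted; no ``completing the square'' is actually needed, since the cross term is identically zero rather than absorbed.
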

\begin{proof}
	$$Q_{hk}^\nu(t)=\int_{\mathbb{R}^n}\big[- \bm{b}f+f \nabla\cdot \bm{D}+\bm{D}\nabla f\big]\cdot\left[\frac{\nabla \nu}{\nu} +\bm{D}^{-1}\nabla \cdot \bm{D}-\bm{D}^{-1}\bm{b}\right]\mathrm{d}\bm{x}$$
	$$=\int_{\mathbb{R}^n}f\left[\frac{\nabla \nu}{\nu} +\bm{D}^{-1}\nabla \cdot \bm{D}-\bm{D}^{-1}\bm{b}\right]^T \bm{D} \left[- \bm{D}^{-1}\bm{b}+\bm{D}^{-1}\nabla\cdot \bm{D}+\frac{\nabla f}{f}\right]\mathrm{d}\bm{x}$$
	$$=\int_{\mathbb{R}^n}f\left[\frac{\nabla \nu}{\nu} +\bm{D}^{-1}\nabla \cdot \bm{D}-\bm{D}^{-1}\bm{b}\right]^T \bm{D} \left[- \bm{D}^{-1}\bm{b}+\bm{D}^{-1}\nabla\cdot \bm{D}+\frac{\nabla \nu}{\nu}\right]\mathrm{d}\bm{x}$$
	$$+\int_{\mathbb{R}^n}f\left[\frac{\nabla \nu}{\nu} +\bm{D}^{-1}\nabla \cdot \bm{D}-\bm{D}^{-1}\bm{b}\right]^T \bm{D} \left[- \frac{\nabla \nu}{\nu}+\frac{\nabla f}{f}\right]\mathrm{d}\bm{x}.$$
	Since $\bm{D}$ is positive definite, the first term is non-negative. The second term equals 
	$$\int_{\mathbb{R}^n}[\bm{D}\nabla \nu +\nu\nabla \cdot \bm{D}-\bm{b}\nu]^T \frac{f}{\nu} \left[- \frac{\nabla \nu}{\nu}+\frac{\nabla f}{f}\right]\mathrm{d}\bm{x}$$
	$$=\int_{\mathbb{R}^n}[\bm{D}\nabla \nu +\nu\nabla \cdot \bm{D}-\bm{b}\nu]^T \nabla\left(\frac{f}{\nu}\right)\mathrm{d}\bm{x}$$
	$$=-\int_{\mathbb{R}^n}\frac{f}{\nu}\nabla\cdot[\bm{D}\nabla \nu +\nu\nabla \cdot \bm{D}-\bm{b}\nu] \mathrm{d}\bm{x}=0,$$
	since $\nu$ is a stationary measure, $\nabla\cdot\nabla\cdot(\bm{D}\nu)-\nabla\cdot (\bm{b}\nu)=0$.
	\qed
\end{proof}
Thus we have the decomposition
$$e_p(t)=Q_{hk}^\nu(t)+[-\mathrm{d}F^\nu(t)/\mathrm{d}t],$$
where each term is non-negative. This is also valid for the torus version.

\subsection{Time limits of thermodynamic quantities}
Since $\bar{f}(t)$ converges to $\bar{\rho}$, $\bar{F}(t)$ and $\mathrm{d}\bar{F}(t)/\mathrm{d}t$ converge to $0$, $\bar{e_p}(t)$ and $\bar{Q}_{hk}(t)$ converge to the stationary entropy production rate 
$$\bar{e_p}=\int_{\mathbb{T}^n}\frac{1}{\bar{\rho}}[- \bar{\bm{b}}\bar{\rho}+\bar{\rho} \nabla\cdot \bar{\bm{D}}+\bar{\bm{D}}\nabla \bar{\rho}]^T\bar{\bm{D}}^{-1}[- \bar{\bm{b}}\bar{\rho}+\bar{\rho} \nabla\cdot \bar{\bm{D}}+\bar{\bm{D}}\nabla \bar{\rho}]\mathrm{d}\bar{\bm{x}}.$$

For the lifted diffusion process, $f(t)$ does not converge to a stationary distribution, therefore we do not have the stationary version of these quantities. However, we can still study their behavior as $t\to \infty$.

If we set $\nu$ to be the periodic stationary measure $\rho$, then $Q_{hk}^\rho(t)$ converges to
$$\int_{\mathbb{T}^n}[- \bar{\bm{b}}\bar{\rho}+\bar{\rho} \nabla\cdot \bar{\bm{D}}+\bar{\bm{D}}\nabla \bar{\rho}]\cdot\left[\frac{\nabla \bar{\rho}}{\bar{\rho}} +\bar{\bm{D}}^{-1}\nabla \cdot \bar{\bm{D}}-\bar{\bm{D}}^{-1}\bar{\bm{b}}\right]\mathrm{d}\bar{\bm{x}},$$
which is just $\bar{e_p}$.

If we set $\nu$ to be the detailed balanced stationary measure $\mu$ (if exists), then $Q_{hk}^\mu(t)\equiv 0$ since $\nabla\cdot(\bm{D}\mu)-\bm{b}\mu=\bm{0}$.

For the time limit of $e_p(t)$, we have the following theorem. The proof is in the next part.
\begin{theorem}
	\label{thm1}
	For any initial distribution $f(\bm{x},0)$ that has a finite covariance matrix, the entropy production rate of diffusion process ${\bm X}(t)$ 
	on $\mathbb{R}^n$, $e_p(t)$, also converges to $\bar{e_p}$ in Ces\`{a}ro's sense that
	\[
	\lim_{T\to\infty}\frac{1}{T}\int_0^T e_p(t)\mathrm{d}t=\bar{e_p}.
	\]
\end{theorem}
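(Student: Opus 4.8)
The plan is to mirror the discrete argument of Theorem~\ref{t2} in the continuous setting, reducing the Ces\`{a}ro convergence of $e_p(t)$ to a sublinear growth estimate on the differential entropy $\H[f(t)] = -\int_{\mathbb{R}^n} f(\bm{x},t)\log f(\bm{x},t)\,\mathrm{d}\bm{x}$ of the lifted diffusion. The first step is the bookkeeping identity: using the periodic stationary measure $\rho$ as reference, one subtracts the torus balance from the $\mathbb{R}^n$ balance. Since $\bm{b}$, $\bm{D}$ and $\rho$ are all $1$-periodic, the integrand defining $e_p(t)+\mathrm{d}F^\rho(t)/\mathrm{d}t = Q_{hk}^\rho(t)$ descends to $\mathbb{T}^n$ when expressed through the folded density $\bar f$, so that $Q_{hk}^\rho(t) = \bar Q_{hk}(t) + (\text{a term that integrates to }\bar e_p(t)-\bar F\text{-flux})$; carrying this through, exactly as in Lemma~\ref{le8}, one obtains
\[
\frac{1}{T}\int_0^T e_p(t)\,\mathrm{d}t - \frac{1}{T}\int_0^T \bar{e_p}(t)\,\mathrm{d}t = \frac{1}{T}\big[F^\rho(0)-F^\rho(T)+\bar F(T)-\bar F(0)\big].
\]
Because $\bar f(t)\to\bar\rho$ on the compact torus, $\bar F(T)$ and $\frac1T\int_0^T\bar e_p(t)\,\mathrm{d}t\to\bar e_p$ are under control, and since $F^\rho(T)+\H[f(T)] = -\int f(\bm{x},T)\log\rho(\bm{x})\,\mathrm{d}\bm{x}$ is bounded (as $\rho$ is bounded above and below away from $0$ by periodicity and positivity), the whole problem collapses to proving $\H[f(T)]/T \to 0$.

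The second, and main, step is the entropy bound $\H[f(t)] = o(t)$. The hypothesis that $f(\cdot,0)$ has finite covariance is the natural replacement for ``finite entropy of a single state'' in the discrete case: I would first show that the covariance of $\bm{X}(t)$ grows at most linearly, $\mathbb{E}\|\bm{X}(t)\|^2 \le \mathbb{E}\|\bm{X}(0)\|^2 + C t$, using It\^o's formula together with the boundedness of $\bm{b}$ and $\bm{D}$ (both are continuous and periodic, hence uniformly bounded). Then the standard maximum-entropy inequality — among all densities on $\mathbb{R}^n$ with a prescribed second moment, the Gaussian maximizes differential entropy — gives
\[
\H[f(t)] \;\le\; \frac{n}{2}\log\!\big(2\pi e\big) + \frac{n}{2}\log\!\Big(\tfrac{1}{n}\,\mathbb{E}\|\bm{X}(t)-\mathbb{E}\bm{X}(t)\|^2\Big) \;=\; O(\log t),
\]
so in fact $\H[f(t)]$ grows only logarithmically, and a fortiori $\H[f(T)]/T\to 0$. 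One wrinkle: the maximum-entropy bound requires $\H[f(t)] > -\infty$ as well, i.e. that $f(t)$ does not become too singular; this follows from hypoellipticity/parabolic smoothing — for $t>0$ the transition density is smooth and strictly positive, and one can get a crude lower bound on $\H$ from a Gaussian lower heat-kernel estimate (available since $\bm{D}$ is uniformly elliptic with smooth coefficients), or simply restrict attention to $t\ge 1$ after absorbing the finite quantity $\H[f(1)]$ into constants.

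Finally, assembling: $\lim_{T\to\infty}\frac1T\int_0^T e_p(t)\,\mathrm{d}t - \bar e_p = -\lim_{T\to\infty}\H[f(T)]/T = 0$. The step I expect to be the real obstacle is not the entropy bound itself but the rigorous justification of the integrations by parts and the interchange of limits/integrals hidden in ``$f(t)$ and its spatial derivatives decay fast enough'' — in particular verifying that the finite-covariance initial condition propagates to enough decay of $f(\bm{x},t)$ and $\nabla f(\bm{x},t)$ at spatial infinity that the flux terms on $\partial\mathbb{R}^n$ in Lemma~\ref{dfm} and in the definitions of $e_p, F^\rho, Q_{hk}^\rho$ genuinely vanish, and that $F^\rho(t)$ stays finite for all $t$. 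Under the paper's blanket decay assumption this is granted, so modulo that standing hypothesis the proof goes through as above; I would state explicitly that the finite-covariance hypothesis is what makes the linear moment growth — and hence the logarithmic entropy bound — available, and otherwise follow the discrete template verbatim.
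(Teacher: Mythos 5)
Your proposal follows essentially the same route as the paper: reduce the Ces\`{a}ro limit to $\H[f(T)]/T\to 0$ via the identity $\frac{1}{T}\int_0^T e_p - \frac{1}{T}\int_0^T \bar e_p = \frac{1}{T}[F^\rho(0)-F^\rho(T)+\bar F(T)-\bar F(0)]$, then control $\H[f(T)]$ by the Gaussian maximum-entropy bound applied to the covariance of $\bm{X}(T)$. The one inaccuracy is your claim of \emph{linear} second-moment growth: the It\^o computation with a bounded drift gives $\frac{\mathrm{d}}{\mathrm{d}t}\mathbb{E}\|\bm{X}(t)\|^2 \le C_1\sqrt{\mathbb{E}\|\bm{X}(t)\|^2}+C_2$ because of the cross term $\mathbb{E}[\bm{X}\cdot\bm{b}(\bm{X})]$, which only yields $O(t^2)$ (indeed the lifted process generically has a nonzero mean drift, so the uncentered moment genuinely grows quadratically, and even the centered covariance is only shown to be $O(t^2)$ by elementary means — this is exactly the paper's Lemma on $|\mathrm{Cov}[\bm{X}(T)]_{ij}|\le CT^2$; linear variance growth would require homogenization). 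This overclaim is harmless, since $\log\det\Sigma = O(\log T)$ for any polynomial bound on the covariance; also note the paper gets the lower bound on $\H[f(t)]$ more cheaply than your heat-kernel argument, from $f(\bm{x},t)\le\bar f(\bar{\bm{x}},t)\to\bar\rho$, which uniformly bounds $f$ above for large $t$.
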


In summary, $e_p=Q^\nu_{hk}+(-\mathrm{d}F^\nu/\mathrm{d}t)$, where $e_p$, $Q^\nu_{hk}$ and $-\mathrm{d}F^\nu/\mathrm{d}t$ are non-negative.

$e_p\to\bar{e_p}$ in Ces\`{a}ro's sense, $\mathrm{d}F^\mu/\mathrm{d}t=-e_p\to -\bar{e_p}$ in Ces\`{a}ro's sense, $Q_{hk}^\mu\equiv 0$, $\mathrm{d}F^\rho/\mathrm{d}t\to 0$ in Ces\`{a}ro's sense, $Q_{hk}^\rho\to \bar{e_p}$ in general sense.

Therefore, the periodic stationary measure $\rho$ and the detailed balanced stationary measure $\mu$ reach the maximum and minimum of $Q_{hk}^\nu$, $\bar{e_p}$ and $0$, as $t\to \infty$.

When $-\bm{D}^{-1}(\nabla\cdot \bm{D}-\bm{b})$ is not curl-free, $\mu$ does not exist, and the minimum of $Q_{hk}^\nu$ is larger than $0$.

\subsection{Proof of Theorem \ref{thm1}}
For a distribution $q(\bm{x})$, its entropy is defined as
$$\text{H}[q]=\int_{\mathbb{R}^n}-q(\bm{x})\log q(\bm{x})\mathrm{d}\bm{x}.$$

We have a famous result that the maximal entropy under fixed variance is achieved by normal distributions \cite{max}:
\begin{lemma}
	\label{var}
	For a continuous probability density function $p$ on $\mathbb{R}^n$ with fixed covariance matrix $\Sigma$, its entropy $\text{H}[p]$ satisfies
	$$\text{H}[p]\le \frac{1}{2}\Big[n+\log \big(2^n\pi^n \det \Sigma\big)\Big].$$
	The equality holds if and only if $p$ is an $n$-dimensional normal distribution with covariance matrix $\Sigma$.
\end{lemma}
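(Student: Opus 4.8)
\textbf{Proof proposal for Lemma \ref{var}.}

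The plan is to reduce the $n$-dimensional statement to a pointwise ``Gibbs inequality'' comparison against the Gaussian with the prescribed covariance. Let $\Sigma$ be the given covariance matrix, assumed positive definite (if $\Sigma$ is singular the density is supported on a lower-dimensional affine subspace and $\H[p]=-\infty$, so the bound holds trivially; I would dispatch this case in one sentence). Let
$$g(\bm{x})=\frac{1}{\sqrt{(2\pi)^n\det\Sigma}}\exp\!\Big(-\tfrac12(\bm{x}-\bm{m})^T\Sigma^{-1}(\bm{x}-\bm{m})\Big)$$
be the multivariate normal density with the same mean $\bm{m}$ and covariance $\Sigma$ as $p$. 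The key identity is that $\log g(\bm{x})$ is a quadratic polynomial in $\bm{x}$ whose expectation under \emph{any} density with mean $\bm{m}$ and covariance $\Sigma$ is the same; concretely $\int p(\bm{x})\log g(\bm{x})\,\rd\bm{x}=\int g(\bm{x})\log g(\bm{x})\,\rd\bm{x}=-\tfrac12\big[n+\log((2\pi)^n\det\Sigma)\big]$, because $\mathbb{E}_p[(\bm{x}-\bm{m})^T\Sigma^{-1}(\bm{x}-\bm{m})]=\operatorname{tr}(\Sigma^{-1}\Sigma)=n$.

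Next I would invoke nonnegativity of the Kullback--Leibler divergence: $\text{D}_{\text{KL}}(p,g)=\int p\log(p/g)\,\rd\bm{x}\ge 0$, which follows from $\log y\ge 1-1/y$ exactly as in the proof of Lemma \ref{voigt} above. Rearranging,
$$\H[p]=-\int p\log p\,\rd\bm{x}\le -\int p\log g\,\rd\bm{x}=\tfrac12\big[n+\log\big(2^n\pi^n\det\Sigma\big)\big],$$
which is the claimed bound. For the equality case, $\text{D}_{\text{KL}}(p,g)=0$ forces $p=g$ almost everywhere (the inequality $\log y\ge 1-1/y$ is strict off $y=1$), so equality holds precisely when $p$ is the normal density with covariance $\Sigma$.

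The only genuine subtlety — the main ``obstacle'' — is integrability: one must make sure $\int p\log g\,\rd\bm{x}$ and $\int p\log p\,\rd\bm{x}$ are well-defined so that the subtraction $\H[p]=-\int p\log p=-\int p\log g+\text{D}_{\text{KL}}(p,g)$ is legitimate and not an $\infty-\infty$. The first integral is automatically finite because $\log g$ is a quadratic with finite $p$-expectation (here is where the finite-covariance hypothesis is used). For the second, if $\H[p]=-\infty$ the bound is vacuous, and if the positive part $\int_{p<1}(-p\log p)$ were $+\infty$ one notes it is controlled by $\int p(\bm{x})\,|\log g(\bm{x})|\,\rd\bm{x}+\text{(const)}$ via the bound $-\log p\le -\log g + (g/p-1)\cdot\mathbf{1}_{p<g}$-type estimates, again finite by the covariance assumption; so $\H[p]\in[-\infty,+\infty)$ and the rearrangement is valid. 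I would state this integrability check as a short paragraph rather than belabor it. Since this lemma is a standard fact cited to \cite{max}, a brief self-contained argument along these lines suffices.
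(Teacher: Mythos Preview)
Your proof is correct and is the standard Gibbs--inequality argument for this classical fact. Note, however, that the paper does \emph{not} give its own proof of Lemma~\ref{var}: it simply states the result as ``a famous result'' and cites \cite{max}. So there is no paper proof to compare against; your write-up would serve as a self-contained replacement for that citation. The only suggestion is that the integrability paragraph can be tightened: once $\int p\log g\,\rd\bm{x}$ is finite (by the second-moment hypothesis), the pointwise bound $p\log(p/g)\ge p-g$ (equivalent to $x\log x\ge x-1$) integrates to $\text{D}_{\text{KL}}(p,g)\ge 0$ directly and simultaneously shows $\H[p]\le -\int p\log g<\infty$, so the somewhat ad hoc estimate you sketch is unnecessary.
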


The proof consists of the following lemmas. The key idea is to transform the convergence of entropy production rate into the control of entropy.

\begin{lemma}
	\label{le4}
	Assume the initial distribution $f(\bm{x},0)$ has finite covariance matrix. Then 
	$$\lim_{T\to\infty}\frac{1}{T}\int_0^T e_p(t)\mathrm{d}t-\bar{e_p}=0\Longleftrightarrow \text{H}[f(T)]/T\to 0.$$
\end{lemma}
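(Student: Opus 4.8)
The plan is to mirror the argument of Lemma \ref{le8} for the discrete case, replacing sums by integrals and using the stationary measure $\rho$ as the reference. First I would write down the key identity that relates the entropy production rates of the lifted diffusion on $\mathbb{R}^n$ and the folded diffusion on $\mathbb{T}^n$ through the free energy with respect to the periodic measure:
\begin{equation*}
\frac{\mathrm{d}F^\rho(t)}{\mathrm{d}t}+e_p(t)=Q_{hk}^\rho(t)=\bar{Q}_{hk}(t)=\frac{\mathrm{d}\bar F(t)}{\mathrm{d}t}+\bar{e_p}(t).
\end{equation*}
The middle equality $Q_{hk}^\rho(t)=\bar Q_{hk}(t)$ is the crux: because $\rho$ is the $1$-periodic continuation of $\bar\rho$ and the coefficients $\bm b,\bm D$ are $1$-periodic, the integrand of $Q_{hk}^\rho$ over $\mathbb{R}^n$ is $1$-periodic, so integrating it over $\mathbb{R}^n$ against $f(\bm x,t)$ collapses — after summing over the integer shifts as in the folding formula $\bar f(\bar{\bm x},t)=\sum_{\bm i} f(\bar{\bm x}+\bm i,t)$ — to the integral of the same periodic integrand over $\mathbb{T}^n$ against $\bar f(\bar{\bm x},t)$, which is exactly $\bar Q_{hk}(t)$.

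Next I would integrate the identity from $0$ to $T$ and divide by $T$, obtaining
\begin{equation*}
\frac{1}{T}\int_0^T e_p(t)\,\mathrm{d}t-\frac{1}{T}\int_0^T\bar{e_p}(t)\,\mathrm{d}t=\frac{1}{T}\Big[F^\rho(0)-F^\rho(T)+\bar F(T)-\bar F(0)\Big].
\end{equation*}
Since $\bar f(t)\to\bar\rho$ (established after Lemma \ref{dfm}), $\bar F(T)$ is bounded, $\bar F(0)$ is finite (the finite-covariance hypothesis ensures $\H[\bar f(0)]$ and hence $\bar F(0)$ is finite), and $\frac1T\int_0^T\bar{e_p}(t)\,\mathrm{d}t\to\bar{e_p}$. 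It remains to handle $F^\rho(T)/T$. Writing $F^\rho(T)=\int f(\bm x,T)\log f(\bm x,T)\,\mathrm{d}\bm x-\int f(\bm x,T)\log\rho(\bm x)\,\mathrm{d}\bm x=-\H[f(T)]-\int\bar f(\bar{\bm x},T)\log\bar\rho(\bar{\bm x})\,\mathrm{d}\bar{\bm x}$ (again using periodicity of $\rho$ to fold the second integral), and noting $\bar\rho$ is a continuous positive density on the compact torus so $\log\bar\rho$ is bounded, the second term is bounded uniformly in $T$. Hence $F^\rho(T)/T=-\H[f(T)]/T+o(1)$, and therefore
\begin{equation*}
\lim_{T\to\infty}\frac{1}{T}\int_0^T e_p(t)\,\mathrm{d}t-\bar{e_p}=-\lim_{T\to\infty}\H[f(T)]/T
\end{equation*}
whenever either limit exists, which gives the claimed equivalence.

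The main obstacle is \emph{rigor of the integrations by parts and the folding manipulations}: I must justify that all the boundary terms at $\partial\mathbb{R}^n$ vanish (this is assumed in the paper's standing hypothesis on decay of $f$ and its derivatives), that the interchange of the integer-shift summation with the integration is legitimate, and — most delicately — that $\bar F(0)$, $F^\rho(0)$, and $\H[f(T)]$ are all finite. Finiteness of $\H[f(T)]$ is where the finite-covariance assumption does real work: by Lemma \ref{var}, a density with finite covariance matrix has entropy bounded above, and lower-boundedness follows since $f(\bm x,T)$ is a genuine (non-degenerate, smooth) transition density for $T>0$; for $T=0$ one simply assumes $f(\cdot,0)$ has finite differential entropy as part of the hypothesis. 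A secondary subtlety is that, unlike the discrete case, $\H[f(T)]$ could in principle be negative, but this does not affect the Cesàro argument since only $\H[f(T)]/T\to 0$ is needed and that is a two-sided statement handled by the companion lemmas bounding $\H[f(T)]$ in terms of $\log T$.
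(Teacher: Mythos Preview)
Your proof is correct and follows essentially the same route as the paper: you establish the identity $e_p(t)+\mathrm{d}F^\rho/\mathrm{d}t=\bar e_p(t)+\mathrm{d}\bar F/\mathrm{d}t$ via the housekeeping heat and folding (the paper obtains the same identity by directly computing the Fisher-information terms), then integrate and bound the auxiliary pieces exactly as the paper does. One harmless sign slip: tracing the signs gives $\lim_{T\to\infty}\frac{1}{T}\int_0^T e_p(t)\,\mathrm{d}t-\bar e_p=+\lim_{T\to\infty}\H[f(T)]/T$ rather than $-\lim$, but since the lemma only asserts the equivalence of both limits vanishing, this does not affect the conclusion.
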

\begin{proof}
	We have
	$$e_p(t)-\bar{e_p}(t)=\int_{\mathbb{R}^n}\frac{1}{f}(\nabla f)^T\bm{D}\nabla f  \mathrm{d}\bm{x}-\int_{\mathbb{T}^n}\frac{1}{\bar{f}}(\nabla \bar{f})^T\bar{\bm{D}}\nabla \bar{f}  \mathrm{d}\bar{\bm{x}}=-\frac{\mathrm{d}F^\rho(t)}{\mathrm{d}t}+\frac{\mathrm{d}\bar{F}(t)}{\mathrm{d}t}.$$
	
	Thus
	$$\frac{1}{T}\int_0^T e_p(t)\mathrm{d}t-\frac{1}{T}\int_0^T \bar{e_p}(t)\mathrm{d}t=\frac{1}{T}[F^\rho(0)-F^\rho(t)+\bar{F}(T)-\bar{F}(0)].$$
	
	Since $\bar{f}(\bar{\bm{x}},t)$ converges to $\bar{\rho}(\bar{\bm{x}})$, $\bar{F}(t)$ converges to $0$. Therefore $\bar{F}(T)$ is bounded.
	
	$F^\rho(0)-\bar{F}(0)=\text{H}[\bar{f}(0)]-\text{H}[f(0)]$. Since $f(\bm{x},0)$ has finite covariance matrix, Lemma \ref{var} shows that $\text{H}[f(0)]$ is finite, so as $\text{H}[\bar{f}(0)]$. 
	
	We also have $\lim_{T\to\infty}\frac{1}{T}\int_0^T \bar{e_p}(t)\mathrm{d}t=\bar{e_p}.$
	
	Since $F^\rho(T)=-\text{H}[f(T)]-\int_{\mathbb{R}^n}f(\bm{x},T)\log \rho\mathrm{d}\bm{x}$, and $\int_{\mathbb{R}^n}f(\bm{x},T)\log \rho\mathrm{d}\bm{x}$ converges to 
	\: $\int_{\mathbb{T}^n}\rho(\bar{\bm{x}})\log \rho(\bar{\bm{x}})\mathrm{d}\bar{\bm{x}}$, which is finite, $F^\rho(T)/T\to 0$ is equivalent to $\text{H}[f(T)]/T\to 0$.
	
	Therefore $\lim_{T\to\infty}\frac{1}{T}\int_0^T e_p(t)\mathrm{d}t-\bar{e_p}=\lim_{T\to\infty}\text{H}[f(T)]/T$.

	\qed
	
\end{proof}

From this proof, we can see that $e_p(t)\to \bar{e_p}\Longleftrightarrow \mathrm{d}\text{H}[f(t)]/\mathrm{d}t\to 0$. The techniques we use can prove $\text{H}[f(t)]/t\to 0$, but not $\mathrm{d}\text{H}[f(t)]/\mathrm{d}t\to 0$. Thus we do not have $e_p(t)\to \bar{e_p}$.

Since $f(\bm{x},t)\le \bar{f}(\bar{\bm{x}},t)$, $\bar{f}(\bar{\bm{x}},t)$ converges to $\bar{\rho}(\bar{\bm{x}})$, $f(\bm{x},t)$ has a uniform upper bound for large $t$. Therefore $\text{H}[p(t)]$ has a finite lower bound for large $t$. We only need to control $\text{H}[f(t)]$ from above. From Lemma \ref{var}, we need to control the covariance matrix of the diffusion process.

\begin{lemma}
	\label{difv}
	Consider the diffusion process $\bm{X}(t)$ on $\mathbb{R}^n$ with initial distribution $f(\bm{x},0)$. Assume the initial distribution $f(\bm{x},t)$ has finite covariance matrix. Then one has constants $C,T_0$ such that for any $T>T_0$, $i,j=1,\cdots,n$, $|\text{Cov}[\bm{X}(T)]_{ij}|\le CT^2$.
\end{lemma}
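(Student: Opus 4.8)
The plan is to read off the growth of the second moments straight from the integral form of the stochastic differential equation, using only that the coefficients are bounded. Since $\bm{b}$ and $\bm{\Gamma}$ are continuous and $1$-periodic in every coordinate, they are determined by their values on the compact cube $[0,1]^n$, hence uniformly bounded; set $M_b=\sup_{\bm{x}\in\mathbb{R}^n}|\bm{b}(\bm{x})|<\infty$ and $M_\Gamma^2=\sup_{\bm{x}\in\mathbb{R}^n}\sum_{i,k}\Gamma_{ik}(\bm{x})^2<\infty$. Being moreover $C^\infty$ and periodic, the coefficients have bounded derivatives, so $\bm{X}(t)$ is a well-defined strong solution whose moments of all orders are finite whenever $f(\bm{x},0)$ has finite second moments; in particular $\text{Cov}[\bm{X}(T)]$ is well defined.

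First I would integrate the SDE, writing $\bm{X}(T)-\bm{X}(0)=\bm{A}(T)+\bm{M}(T)$ with $\bm{A}(T)=\int_0^T\bm{b}(\bm{X}(s))\,\mathrm{d}s$ and $\bm{M}(T)=\int_0^T\bm{\Gamma}(\bm{X}(s))\,\mathrm{d}\bm{B}(s)$. The drift part is controlled pathwise, $|\bm{A}(T)|\le M_b T$, so $\mathbb{E}\,|\bm{A}(T)|^2\le M_b^2 T^2$; this is the term producing the quadratic rate, and one should not expect to do better, since $\bm{A}(T)$ need not be centered. For the martingale part, the vector-valued It\^{o} isometry gives $\mathbb{E}\,|\bm{M}(T)|^2=\mathbb{E}\int_0^T\sum_{i,k}\Gamma_{ik}(\bm{X}(s))^2\,\mathrm{d}s\le M_\Gamma^2 T$. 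Combining, $\mathbb{E}\,|\bm{X}(T)-\bm{X}(0)|^2\le 2M_b^2 T^2+2M_\Gamma^2 T$.

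Finally I would pass from displacement to covariance. For each $i$ the variance is the minimal mean-square deviation, so $\text{Var}[X_i(T)]\le\mathbb{E}\big[(X_i(T)-\mathbb{E}[X_i(0)])^2\big]\le 2\,\text{Var}[X_i(0)]+2\,\mathbb{E}\,|\bm{X}(T)-\bm{X}(0)|^2$. Since $f(\bm{x},0)$ has finite covariance, $\text{Var}[X_i(0)]$ is a fixed finite constant, hence $\text{Var}[X_i(T)]\le aT^2+bT+c$ and therefore $\text{Var}[X_i(T)]\le C_1 T^2$ for all $T$ beyond some $T_0$. The off-diagonal entries then follow from Cauchy--Schwarz, $|\text{Cov}[\bm{X}(T)]_{ij}|\le\sqrt{\text{Var}[X_i(T)]\,\text{Var}[X_j(T)]}\le C_1 T^2$, which is the assertion with $C=C_1$.

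The only delicate point is conceptual bookkeeping rather than technique: one must notice that periodicity together with continuity forces the coefficients to be bounded, so that both the elementary pathwise bound on the drift and the It\^{o} isometry on the diffusion apply, and one must accept losing a full power of $T$ relative to the diffusive rate, which is unavoidable because the time-integrated drift $\bm{A}(T)$ carries no mean-zero structure. No ergodic or concentration input is needed here; this crude polynomial bound is precisely what is required downstream, since through Lemma \ref{var} it makes $\H[f(T)]$ an $O(\log T)$ quantity and hence yields $\H[f(T)]/T\to 0$.
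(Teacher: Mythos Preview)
Your argument is correct. The separation $\bm{X}(T)-\bm{X}(0)=\bm{A}(T)+\bm{M}(T)$ into bounded-drift and martingale parts, followed by the pathwise bound $|\bm{A}(T)|\le M_bT$ and the It\^{o} isometry for $\bm{M}(T)$, gives the second-moment growth directly, and the passage to variances and off-diagonal covariances via Cauchy--Schwarz is clean.

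This is, however, a genuinely different route from the paper's. The paper does not split drift from martingale; instead it fixes a small mesh $\Delta t$, telescopes $\bm{X}(T)=\bm{X}(0)+\sum_{k=1}^{m}[\bm{X}(k\Delta t)-\bm{X}((k-1)\Delta t)]$ with $m=\lceil T/\Delta t\rceil$, bounds the variance of each increment by $2G\Delta t$ using the infinitesimal-variance expansion, and then controls $\text{Cov}[\bm{X}(T)]_{ij}$ by applying $|\text{Cov}(Y,Z)|\le\sqrt{\text{Var}(Y)\text{Var}(Z)}$ to all $(m+1)^2$ cross terms. The $T^2$ rate there arises from the sheer number of cross terms rather than from the drift integral. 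Your approach is the more standard stochastic-calculus argument and is arguably sharper in spirit (it isolates exactly which term forces the quadratic growth), while the paper's discretization avoids invoking the It\^{o} isometry explicitly and works entirely with elementary moment expansions; both land on the same $O(T^2)$ bound, which is all that the downstream entropy estimate needs.
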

\begin{proof}
	For the diffusion process $\bm{X}(t)$ on $\mathbb{R}^n$, we have the infinitesimal mean
	$$\mathbb{E}[\bm{X}(t+\Delta t)-\bm{X}(t)\mid\bm{X}(t)=\bm{x}_0]=\bm{b}(\bm{x}_0)\Delta t+O(\Delta t^2).$$
	
	We also have the infinitesimal variance
	$$\mathbb{E}\{[\bm{X}(t+\Delta t)-\bm{X}(t)][\bm{X}(t+\Delta t)-\bm{X}(t)]^T\mid\bm{X}(t)=\bm{x}_0\}=\bm{\Gamma}(\bm{x}_0)\bm{\Gamma}(\bm{x}_0)^T\Delta t+O(\Delta t^2).$$
	
	Therefore the covariance matrix satisfies 
	$$\text{Cov}[\bm{X}(t+\Delta t)\mid\bm{X}(t)=\bm{x}_0]=\bm{\Gamma}(\bm{x}_0)\bm{\Gamma}(\bm{x}_0)^T\Delta t+O(\Delta t^2).$$
	
	Set $G=\max_{\bm{x},i}[\Gamma(\bm{x})\Gamma(\bm{x})^T]_{ii}<\infty$. Then we can choose a small enough $\Delta t$ such that for any $0\le \Delta t'\le \Delta t$, $i=1,\cdots,n$
	$$\text{Var}[\bm{X}_i(t+\Delta t')-\bm{X}_i(t)]\le 2G\Delta t,$$
	regardless of the value of $\bm{X}(t)$.
	
	Denote $D=\max_{i}\text{Var}[\bm{X}_i(0)]$.
	
	For a fixed $T>0$, set $m=\lceil T/\Delta t\rceil$. Then 
	$$\text{Cov}[\bm{X}(T)]=\text{Cov}\{\bm{X}(0)+[\bm{X}(\Delta t)-\bm{X}(0)]+\cdots +[\bm{X}(T)-\bm{X}((m-1)\Delta t)]\}.$$
	
	For two random variables $Y,Z$, we have $|\text{Cov}(Y,Z)|\le \sqrt{\text{Var}[Y]\text{Var}[Z]}$.
	
	Applying this inequality to $\text{Cov}[\bm{X}(T)]$, we have 
	$$|\text{Cov}[\bm{X}(T)]_{ij}|\le D+2m\sqrt{2DG\Delta t}+2m^2G\Delta t$$
	
	When $T$ is large enough, $|\text{Cov}[\bm{X}(T)]_{ij}|\le 3(T/\Delta t)^2G\Delta t$.
	\qed
\end{proof}

When $|\text{Cov}[\bm{X}(T)]_{ij}|\le CT^2$, $|\det \text{Cov}[\bm{X}(T)]|\le n! C^n T^{2n}$. Now we have 
\begin{lemma}
	For the diffusion process $\bm{X}(t)$ on $\mathbb{R}^n$, assume the initial distribution $f(\bm{x},0)$ has finite covariance matrix. Then its entropy at time $T$, $\text{H}(T)$, is controlled by $C''\le \text{H}(T)\le C' \log T$ for $T$ large enough, where $C'$ and $C''$ are constants. Therefore $\lim_{T\to\infty}\text{H}(T)/T=0$.
\end{lemma}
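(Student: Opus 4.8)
The plan is to establish the two-sided bound $C''\le\H(T)\le C'\log T$ for all large $T$ and then squeeze to get $\H(T)/T\to0$. The upper bound will come from the maximum-entropy principle of Lemma \ref{var} applied to the law of $\bm X(T)$, supplied with the quadratic growth of its covariance matrix from Lemma \ref{difv}. The lower bound is essentially the observation already recorded just before the statement: $f(\bm x,t)$ is uniformly bounded above for $t$ large, via comparison with the torus density $\bar f$, so its entropy cannot drift to $-\infty$.

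For the upper bound, fix $T>T_0$ so that Lemma \ref{difv} gives $|\text{Cov}[\bm X(T)]_{ij}|\le CT^2$ for all $i,j$. Write $\Sigma_T=\text{Cov}[\bm X(T)]$, which is finite since $f(\bm x,0)$ has finite covariance and is positive definite since $f(\cdot,T)$ is a genuine density on $\mathbb R^n$. Expanding $\det\Sigma_T$ by the Leibniz formula, it is a sum of $n!$ signed products of $n$ entries of $\Sigma_T$, each bounded by $CT^2$, so $0<\det\Sigma_T\le n!\,C^nT^{2n}$. Lemma \ref{var} with $\Sigma=\Sigma_T$ then gives
$$\H(T)\le\frac12\Big[n+\log\big(2^n\pi^n\det\Sigma_T\big)\Big]\le\frac12\Big[n+\log\big(2^n\pi^n n!\,C^n\big)\Big]+n\log T,$$
and for $T$ large the constant prefactor is dominated, so $\H(T)\le C'\log T$ for any fixed $C'>n$. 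In particular $\H(T)<\infty$, which is what makes the lower-bound inequality below meaningful.

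For the lower bound I would recall that $\bar f(\bar{\bm x},t)=\sum_{\bm i\in\mathbb Z^n}f(\bar{\bm x}+\bm i,t)$ is a sum of nonnegative terms, one of which equals $f(\bm x,t)$ with $\bar{\bm x}=\bm x\bmod1$, whence $f(\bm x,t)\le\bar f(\bm x\bmod1,t)$ everywhere. Since $\bar\rho$ is continuous on the compact torus $\mathbb T^n$ and $\bar f(\cdot,t)\to\bar\rho$, there is $M\ge1$ with $\sup_{\bm x}f(\bm x,t)\le M$ for all $t$ large; then pointwise $-f\log f\ge-f\log M$ because $f(\log M-\log f)\ge0$, and integrating over $\mathbb R^n$ gives $\H(T)\ge-\log M=:C''$ for $T$ large. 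Combining the two bounds, $C''/T\le\H(T)/T\le C'\log T/T\to0$, so $\lim_{T\to\infty}\H(T)/T=0$, and together with Lemma \ref{le4} this completes the proof of Theorem \ref{thm1}. I do not expect a genuine obstacle here: the substantive estimates sit in Lemmas \ref{var} and \ref{difv}, and the only point needing a little care is that the convergence $\bar f(\cdot,t)\to\bar\rho$ be uniform in $\bar{\bm x}$ (sup-norm), so as to furnish a $t$-independent bound $M$; this is the standard smoothing property of a non-degenerate diffusion on the compact manifold $\mathbb T^n$, used already in the assertion $\bar F(t)\to0$.
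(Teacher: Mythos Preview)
Your argument is correct and follows essentially the same route as the paper: the lower bound comes from $f\le\bar f\to\bar\rho$ giving a uniform pointwise upper bound on $f$, and the upper bound combines Lemma~\ref{var} with the covariance control of Lemma~\ref{difv} and the Leibniz bound $\det\Sigma_T\le n!\,C^nT^{2n}$. Your additional remark about needing uniform (sup-norm) convergence of $\bar f$ to $\bar\rho$ is a fair point that the paper leaves implicit.
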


This finishes the proof of Theorem \ref{thm1}.

\subsection{Entropy production as energy dissipation}
Consider the free energy with detailed balanced stationary measure $\mu=\exp(-\varphi)$ (if exists)
$$F^\mu(t)=\int_{\mathbb{R}^n}f(\bm{x},t)\log\frac{f(\bm{x},t)}{\mu(\bm{x})}\mathrm{d}\bm{x}=\int_{\mathbb{R}^n}f(\bm{x},t)\log f(\bm{x},t)\mathrm{d}\bm{x}-\int_{\mathbb{R}^n}f(\bm{x},t)\log \mu(\bm{x})\mathrm{d}\bm{x}.$$

Here $S(t)=-\int_{\mathbb{R}^n}f(\bm{x},t)\log f(\bm{x},t)\mathrm{d}\bm{x}$ is the entropy, and $E(t)=\int_{\mathbb{R}^n}f(\bm{x},t)\varphi(\bm{x})\mathrm{d}\bm{x}$ is the mean potential energy. Thus $F^\mu(t)=E(t)-S(t)$.

For $E(t)$, we have the following result:
\begin{proposition}
	The time derivative of $E(t)$ converges to the negative stationary entropy production rate,
	$$\frac{\mathrm{d}E(t)}{\mathrm{d}t}\to -\bar{e_p}.$$
\end{proposition}
\begin{proof}
	$$\frac{\mathrm{d}E(t)}{\mathrm{d}t}+\bar{e_p}(t)=\int_{\mathbb{R}^n}\left[\nabla\cdot(\bm{D}f)-\bm{b}f\right]\cdot\left(\frac{\nabla \mu}{\mu}\right)\mathrm{d}\bm{x}$$
	$$+\int_{\mathbb{R}^n}\left[\nabla\cdot(\bm{D}f)-\bm{b}f\right]\cdot\left(\frac{\nabla \bar{f}}{\bar{f}}+\bm{D}^{-1}\nabla\cdot\bm{D}-\bm{D}^{-1}\bm{b}\right)\mathrm{d}\bm{x}$$
	$$=\int_{\mathbb{T}^n}\left[\nabla\cdot(\bar{\bm{D}}\bar{f})-\bar{\bm{b}}\bar{f}\right]\cdot\left(\frac{\nabla \bar{f}}{\bar{f}}\right)\mathrm{d}\bm{\bar{x}}=-\int_{\mathbb{T}^n}\nabla\cdot\left[\nabla\cdot(\bar{\bm{D}}\bar{f})-\bar{\bm{b}}\bar{f}\right]\log\bar{f}\mathrm{d}\bm{\bar{x}}$$
	$$=-\int_{\mathbb{T}^n}\nabla\cdot\left[\nabla\cdot(\bar{\bm{D}}\bar{\rho})-\bar{\bm{b}}\bar{\rho}\right]\log\bar{\rho}\mathrm{d}\bm{\bar{x}}=0.$$
	
	In the integral $\int_{\mathbb{R}^n}\cdots \mathrm{d}\bm{x}$, $\bar{f}(\bm{x})$ is the $1$-periodic extension of $\bar{f}(\bar{\bm{x}})$. We also apply the facts that $\nabla\cdot\left[\nabla\cdot(\bar{\bm{D}}\bar{\rho})-\bar{\bm{b}}\bar{\rho}\right]=0$ and $\frac{\nabla \mu}{\mu}+\bm{D}^{-1}\nabla\cdot\bm{D}-\bm{D}^{-1}\bm{b}=\bm{0}$.
	
	\qed
\end{proof}

In the decomposition of free energy $F^\mu(t)=E(t)-S(t)$, The first term is asymptotically linear with $t$, and the second term is sub-linear with $t$ (controlled by $C\log t$).

The entropy production of the diffusion process on $\mathbb{T}^n$, which cannot be described by system status quantities directly, is reflected by the free energy/potential energy dissipation of the lifted diffusion process on $\mathbb{R}^n$.

\section{Discussion and Summary of Conclusions}
\label{sec-6}
\subsection{Concerns on probability}
{In this paper, for processes in continuous space, we only consider diffusion processes. The reason is that diffusion processes have continuous trajectories, which imply a well-defined lifting. Consider processes in continuous space with discontinuous trajectories, such as Cauchy process or many other L\'evy processes on $n$-dimensional torus $\mathbb{T}^n$. At discontinuous points of a trajectory, we cannot uniquely determine its lifting into $\mathbb{R}^n$. For example, if the trajectory jumps from $0$ to $0.5$ on $\mathbb{S}^1$, then we do not know whether it is $0$ to $0.5$ or $0$ to $1.5$ after lifting to $\mathbb{R}$. In other words, there are many processes on $\mathbb{R}^n$ with periodic parameters that can be folded back to the same process on $\mathbb{T}^n$. Not all such lifted processes have similar properties as diffusion processes in our paper.}

{In some Markov chains, states can be divided into fast states (large $q_i$, short staying time) and slow states (small $q_i$, long staying time). When $q_i\to \infty$ with the ratio of $q_{ij}$ kept for fast states, the limit process can be tricky: one way is to decimate these fast states, namely coarse graining }\cite{puglisi2010entropy}; {the other way is to keep these transient states. It is very interesting that two ways are different: such transient states, if not decimated, can contribute to the entropy production. How to combine such limit process with lifting might be an interesting future work.}

\subsection{Gibbs potential}

Relative entropy w.r.t. the Lebesgue measure, e.g. Gibbs-Shannon's entropy, 
is not the most appropriate information
characteristics for system with a nontrivial stationary measure.
This insight has already existed in the work of classical thermodynamics,
where entropy as the ``thermodynamic potential'' of an
isolated system with a priori equal probability is replaced by
the free energy as the proper thermodynamic potential for an
isothermal system.  In Gibbs' theory of chemical thermodynamics,
chemical potential of a chemical special $i$ has actually {\em three} 
parts: $\mu_i = \mu_i^o + k_BT\log x_i= h^o - T s^o+ k_BT\log x_i$.  

\subsection{The nature of nonequilibrium dissipation}

The nature of ``thermodynamic dissipation'' has long been debated.  
A notion that is generally agreed upon was put forward by Onsager \cite{Ons}, who clearly identified a dissipation with a transport process, with both nonzero {\em thermodynamic force} and {\em thermodynamic flux}. In fact, they are necessarily vanishing simultaneously in a thermodynamic equilibrium with detailed balance.  This gives rise to the reciprocal relation in the linear regime near equilibrium.  Macroscopic transport processes, however, can be classified into two gross types: Those induced by a nonequilibrium initial condition and those driven by an active forcing.  This distinction, we believe, is precisely behind Clausius' and Kelvin's statements of the Second Law of Thermodynamics, as well as behind the irreversibility formulated by Boltzmann and I. Prigogine, respectively.   Still, in the latter case, the precise physical step(s) at which dissipation occurs has generated a wide range of arguments:  It is attributed to the external driving force, to the transport processes themselves inside a system, and to the ``boundary'' where a system in contact with its nonequilibrium environment \cite{FTT}.   Our mathematical theory clearly indicates that all these perspectives are not incorrect, but a more precise, complete notion really is to identify nonequilibrium {\em cycles} which have been independently discovered by T. L. Hill \cite{hill-book} and by Laudauer and Bennett \cite{bennett}, in biochemistry and computation respectively.

A cyclic macroscopic transport process driven by a sustained nonequilibrium environment is of course an idealization of reality:  A battery has to be re-charged, the chemical solution that sustains a chemostat has to be replenished.  In fact, almost all engineering systems do not use a continuously charged energy source, but rather rely on the principle of quasi-stationarity \cite{ge-qian-pre-13}:  A  narrow range of decreasing external driving force is acceptable.  Therefore, by clearly recognizing the source of a driving force,  the 
cycles inside a finite driven system can and should be identified with 
a spontaneous ``downhill'' of an external process, as clearly 
shown in the present paper. {A stationary driving force is an idealization; it is represented by the house-keeping heat in the present work.  It corresponds to an unbounded potential energy function on an infinite space.}

Mathematically, this lifting of a driven system with discrete
state space or continuous $n$-torus state spaces has been rigorously 
established in the present work.  Generalization of this result
to $\mathbb{R}^n$ without local potential is technically challenging, 
but that should not prevent our understanding of the nature of the 
physics of nonequilibrium dissipation.  In fact, we propose 
a modernized, combined Clausius-Kelvin statement: 
\begin{quotation}
	``{\it A mesoscopic engine that works in completing irreversible 
		internal cycles statistically has necessarily an external effect that 
		passes heat from a warmer to a colder body.}''
\end{quotation}

\section*{Acknowledgments}
	Y. W. would like to thank Krzysztof Burdzy, Anna Erschler, Noah Forman, Mikhail Gromov, Yang Lan, Soumik Pal, Xin Sun, Jiajun Tong, Dongyi Wei, Boyu Zhang, Yumeng Zhang, Tianyi Zheng for helpful advice and discussions.
	H. Q. thanks Prof. Ping Ao for illuminating discussions.
	This work was supported in part by NIH grant R01GM109964
	(PI: Sui Huang) and the Olga Jung Wan Endowed Professorship.

\bibliographystyle{vancouver}
\bibliography{entropy}

\end{document}